\newtheorem{theorem}{Theorem}[section]
\newtheorem{proposition}{Proposition}[section]
\newtheorem{remark}{Remark}[section]%
\newtheorem{lemma}{Lemma}[section]%
\newtheorem{definition}{Definition}[section]%
\numberwithin{equation}{section}
\title{The diffusion approximation of the Multiclass Processor Sharing queue}
\author{Mohamed Ghazali \thanks{\href{mailto:m.ghazali@uhp.ac.ma}{m.ghazali@uhp.ac.ma}}}
\author{Abdelghani Ben Tahar}
\author{Amal Ezzidani}
\affil{Hassan First University of Settat, Faculty of Sciences and Techniques, Settat, 26000, Morocco}
\date{} 
\begin{document}

\maketitle

\begin{abstract}
This paper considers a multiclass processor-sharing queue with feedback. Jobs arrive according to renewal processes, and service times follow general distributions. Upon service completion, jobs may either depart the system or re-enter as a different class according to a probabilistic, Jackson-like routing mechanism. Under heavy-traffic conditions, we establish a diffusion approximation for a measure-valued process tracking the residual service times of jobs. 
\end{abstract}

\section{Introduction}

We consider a single-server, infinite-capacity queue serving $K$ classes of jobs. All jobs present in the system are served simultaneously according to the egalitarian processor-sharing discipline: every present job simultaneously receives an equal fraction of the server's capacity. Jobs arrive from the outside with a given class, and their service time distribution depends on the class. Upon service completion, jobs may leave the system or re-enter it as jobs of a different class, according to a probabilistic, Jackson-like routing mechanism. We call this system a Multiclass Processor Sharing (MPS) queue. 

\subsection{Literature and contribution}

Many studies examine queueing systems with the processor-sharing (PS) discipline. We focus on research that considers general interarrival and service time distributions, especially those that develop fluid and diffusion approximations.
 \cite{gromoll2002fluid} introduced a measure-valued process called the state descriptor, which tracks the residual service times of jobs, encompassing the queue size and total workload. They established fluid limit results for the critically loaded processor sharing queue.
Later, \cite{gromoll2004diffusion} used these results, along with \cite{puha2004invariant}, to derive the diffusion approximation for the
 $GI/GI/1/PS$ queue.
 \cite{puha-al-2006} studied the fluid model of overloaded $GI/GI/1/PS$ queue and gave the asymptotic behaviors (for large time values) of fluid solutions. 
Other authors  used the same framework in analyzing the queueing systems operating under service discipline related to PS. 
For instance, \cite{gromoll2007,gromollRobertZwart} studied, respectively, the diffusion approximation of PS queue with soft deadlines, and the fluid limit of PS queue with impatience. \cite{zhang2011diffusion} established the diffusion approximation for the limited PS queue.
 An extension to the multiclass case with feedback was carried out by \cite{tahar2012fluid}, where a fluid approximation was established for our Multiclass Processor Sharing (MPS) queue. Building on this work, the present paper develops a corresponding diffusion approximation.\\

Our MPS queue belongs to a class of queueing networks known as \textit{multiclass queueing networks}. 
These networks have a finite number of servers, each with unlimited buffer capacity. Jobs move through the system, receiving service at different stations before eventually leaving. As they do, they may switch classes between stations or even loop back within the same station. The transitions between job classes follow a Markovian process, characterized by a fixed substochastic transition matrix.
For a detailed discussion on multiclass queueing networks, see \cite{williams2016stochastic}. 
Our model, which consists of a single station where jobs can feedback within that station, is a specific example of this type of network.

\cite{williams1998diffusion} provided general sufficient conditions under which a heavy-traffic limit theorem holds for open multiclass queueing networks with head-of-the-line (HL) service disciplines (jobs within each class are served in a first-in-first-out (FIFO) manner).
\cite{bramson1998state} showed that a multiplicative state space collapse (SSC) holds for two HL network families: FIFO networks of Kelly type and head-of-the-line proportional processor sharing (HLLPS) networks. Under heavy traffic, SSC implies the queue length process is a function of the workload process.
 Together, these results yield a diffusion approximation for both network families. Building on this framework, \cite{bramsonDai2001} established diffusion approximations for multiclass queueing networks with other HL disciplines.

Deriving diffusion approximations for networks with non-head-of-the-line (non-HL) disciplines, such as processor sharing, is significantly more challenging.
Beyond the previously mentioned processor-sharing literature, research on other non-HL disciplines has primarily focused on single-class systems or their variants. For example, \cite{limic2000behavior} and \cite{gromoll2011diffusion} established heavy-traffic approximations for $GI/GI/1$  queues under last-in-first-out (LIFO) and shortest remaining processing time (SRPT), respectively. Some studies have considered multiclass, multi-station networks without feedback. \cite{kruk2004earliest} derived a heavy-traffic approximation for acyclic networks under earliest deadline first (EDF), while \cite{loeser2024fluid} established a fluid approximation with reneging for random order of service (ROS).
However, these works do not account for feedback, even in the single-station case. Our work, to our knowledge, is one of the first to establish a diffusion approximation for a non-HL, single-station, multiclass queueing network.

\subsection{Methodology and results}

We largely follow the notation and terminology of \cite{tahar2012fluid}.  We model the dynamics of the multiclass processor-sharing (MPS) queue with the following \( K \)-dimensional stochastic processes: \( A(\cdot) \), \( D(\cdot) \), \( Z(\cdot) \), and \( \mu(\cdot) \), taking values in \( \mathbb{N}^K \), \( \mathbb{N}^K \), \( \mathbb{N}^K \), and \( \mathcal{M}^K \), respectively. Here, \( \mathcal{M} \) denotes the space of finite, non-negative Borel measures on \( \mathbb{R}_+ \), equipped with the topology of weak convergence. For each class \( k \), the components of these vectors represent the cumulative number of arrivals \( A_k(t) \) (both internal and external) and departures \( D_k(t) \) up to time \( t \), the number of jobs present \( Z_k(t) \) at time \( t \), and the measure-valued process \( \mu_k(t) \) tracking the residual service times of class \( k \) jobs at time \( t \).
  A random vector \( E(t) \) represents the number of external arrivals (from outside the network) at each class by time \( t \).  The evolution of the MPS queue is described by queueing equations \eqref{eq:arrivee-depart-1}--\eqref{eq:muk-1}, which relate the vectors $A(t)$, $D(t)$, $Z(t)$, $E(t)$, and $\mu(t)$.
The feedback in our MPS queue is characterized by a routing matrix $P$.
  Entry $p_{lk}$ of this matrix represents the probability that a job completing service in class $l$ immediately re-enters the queue as a class $k$ job. This routing decision is independent of past events, as jobs transition between classes in a Markovian fashion. The routing matrix $P$ is time-homogeneous and state-independent.  Therefore, a job's subsequent destination (either another class or departure from the system) upon service completion is determined solely by its current class and the corresponding probabilities in $P$.

\par  As is common in the literature for heavy traffic approximation, we consider a sequence of MPS queues indexed by $r$. 
 Denote by $\hat{\mu}^r(t) = \mu^r(r^2 t)/r$ the diffusion-scaled version of the state descriptor $\mu^r(\cdot)$.  
The main goal of this paper is to prove Theorem \ref{thm:main-result}, which states that, under mild conditions, the following convergence in distribution holds:
\begin{equation*}
\hat{\mu}^r(\cdot) \Rightarrow \Delta^{\nu} W^*(\cdot) \quad\text{as } r \to \infty.
\end{equation*}
  Here, $\Delta^{\nu}:\mathbb{R}_+\rightarrow \mathcal{M}^K$ is the lifting map (cf. Definition \ref{def:lift-map}), and $W^*(\cdot)$ is a one-dimensional reflected Brownian motion arising as the limit of the diffusion-scaled workload process (cf. Proposition \ref{cv:workload}). 

The dynamics of $\mu_k(\cdot)$ for a fixed class $k$ in \eqref{eq:muk-1} mirror those of the state descriptor $\mu(\cdot)$ in \cite{gromoll2004diffusion} (see Equation (2.6) therein), with two key differences: 
\textbf{a)} In this multiclass setting, the cumulative service function $S^r(\cdot)$ (see \eqref{eq:barS}) incorporates the total queue size $\sum_{k} Z^r_k(\cdot)$, rather than the class-specific queue size $Z_k^r(\cdot)$. 
\textbf{b)} The arrival process $A^r_k(\cdot)$ replaces the exogenous arrival process $E^r_k(\cdot)$.  Unlike $E^r_k(\cdot)$, the feedback mechanism introduces uncertainty regarding the renewal nature of $A^r_k(\cdot)$.
 
 Our proof of the main result proceeds in three steps. First, because internal job transitions do not impact the total number of jobs (and hence the processing rate), the total system size evolves similarly to a $GI/GI/1/PS$ queue.
 In this analogous system, each job's service time represents the sum of their individual service requirements in the multiclass model.  
Based on this, we use a measure-valued process \(\gamma^r(\cdot)\),  which describes the total residual service time of all jobs present.
In particular, the number of jobs present in the system and the workload at time $t$  are determined by:
$
\langle 1, \gamma^r(t) \rangle = e\cdot Z^r(t)$ and  $\langle \chi, \gamma^r(t) \rangle = W^r(t)$ for $t\geq 0$.
Denote by $\widehat{\gamma}^{r}(t)= \gamma^r(r^2t)/r$ the diffusion-scaled version of the state descriptor $\gamma^r(\cdot)$. 
We leverage existing results on single-class $GI/GI/1/PS$ queues to establish the diffusion approximation for $\gamma^r(\cdot)$ (cf. Proposition \ref{prop:gamma-global}). This implies the convergence in distribution of the diffusion-scaled total queue size process $\sum_k \hat{Z}_k^r(\cdot)$.

 Second, since jobs may revisit classes in our queueing system, we are interested in the number of visits a job makes to a given class $k$ along its route, and the sequence of classes it occupies during its sojourn. This is captured by a measure $\gamma_{lk}(t)$ representing the total residual service time across all visit instances (past, present, and future) to class $k$, for all jobs currently in class $l$ at time $t$ that initially entered the system as class $l$ jobs. The family of state descriptors $\{\gamma_{lk}(\cdot), l,k=1,\ldots,K\}$ encapsulates much of the relevant system information. Specifically, the total mass of $\gamma_{lk}(t)$ represents the remaining number of visits to class $k$ for jobs initially of class $l$ that are present in the system at time $t$.
In our work, we find it more practical to work with the measure-valued process $\mathcal{Q}(t) =(\mathcal{Q}_1(t),\ldots,\mathcal{Q}_K(t))$, where $\mathcal{Q}_k(t)=\sum_l\gamma_{lk}(t)$. Under heavy traffic and mild assumptions on the primitive data and initial state, we establish the following convergence in distribution:
\begin{equation*}
\hat{\mathcal{Q}}^r(\cdot) \Rightarrow \mathcal{B}*\Delta^{\nu}W^*(\cdot) \quad \text{as } r \to \infty.
\end{equation*}
Here, $\mathcal{B}=\sum_{n=0}^\infty (BP^{\prime})^{n}$, where $B$ is a diagonal matrix with entries $(B_k, k=1,\ldots,K)$, and $B_k$ is the service time distribution function for class $k$ jobs. 
 This convergence of the process $\hat{\mathcal{Q}}^r(\cdot)$ plays a key role in proving the convergence of 
the sequence $(\hat{A}^r(\cdot), \hat{D}^r(\cdot), \hat{Z}^r(\cdot))$ 
(cf. Proposition \ref{theo:conv-of-A-D-Z}). 
 
Finally, having established the diffusion approximation for the arrival process at each class, proving the diffusion approximation for the state descriptor $\mu_k(\cdot)$ requires a minor adaptation of the framework in \cite{gromoll2004diffusion,gromoll2007}.

\subsection{Organization of the paper}


This paper is organized as follows. Section \ref{sec:mult-queue-model} introduces the multiclass processor-sharing model, including the primitive data, processes, and evolution equations (Sections \ref{subsec:queueing-model} and \ref{subsec:model-equation}), as well as the corresponding fluid model (Section \ref{sect:fluid-mod}).  The main result (Theorem \ref{thm:main-result}) under heavy-traffic conditions is presented in Section \ref{MPS-model}. Section \ref{sect:Main-steps-proof} leverages new state descriptors to prove the main result, with supporting results provided in  \ref{unif_conv_sect_app} and \ref{Sec5Q}.

\subsection{Notations} 
In this paper, we will use the following notations. We label classes by $k=1,\ldots,K$, we use $\mathcal{K}$  to denote the set of all classes.
Let $\mathbb{N},\mathbb{R}$ denote the set of natural numbers and real numbers respectively.
 Let $\mathbb{R}_+$ denote the non-negative real numbers, and let 
$\mathbb{N}^{\ast},\mathbb{R}_+^{\ast}$ denote the positive natural numbers and real numbers respectively. Let $\mathbb{R}^K_+$ denote the $K$-dimensional Euclidean space, and $\mathbb{N}^K$ the $K$-ary Cartesian power of $\mathbb{N}$.
 For $a \in \mathbb{R}$, write $a^+$ for the positive part of $a$ and let $\lfloor a \rfloor$ be the integer part of $a$ and $\lceil a \rceil$ the smallest following integer. For $a,b \in \mathbb{R}^K$, let $a \vee b$ denote the componentwise maximum of $a$ and $b$.
  Denote the indicator of Borel set $B \subset \mathbb{R}_+$  by $1_B$ and let $B^{\varepsilon} =  \lbrace  y \in \mathbb{R}: \inf_{x \in B} |x-y| < \varepsilon \rbrace$ for a given $\varepsilon>0$. Let $I(x)$ denotes the interval $[x,\infty)$.
Vectors will be normally arranged as a column. As an exception, the vector $e$ stands for a row vector of ones. Inequalities between vectors in $\mathbb{R}^K$ should be interpreted
componentwise.  
 The transpose of a vector or matrix is denoted by a prime. The $K\times K$ diagonal matrix whose entries are given by the components of $x$ will be denoted by $diag\{x\}$.
For a vector $x \in \mathbb{R}^K$ and a $K \times K$ matrix $A$, we define the following normes respecively
$$ |x| = \max_{k=1}^K |x_k|, \qquad 
 |A| =\max_{k=1}^K \sum_{j=1}^K |A_{kj}|.$$
For a function $g:\mathbb{R}_+ \mapsto \mathbb{R}^K$, let $\| g\|_T= \sup_{t \in [0,T]} \, |g(t)|$ for each $T\geq 0$.
 The following real-valued functions will be used
repeatedly: $\chi (x)=x$ for $x \in \mathbb{R}_+$, and  $\varphi$:
\begin{align} \label{varphi}
\varphi(x)=1/x ~~ \text{ for } x \in (0,\infty), ~~ \text{ and } ~ \varphi(0)=0.	
\end{align}
\par The set of finite, nonnegative Borel measures on $\mathbb{R}_+$ is
denoted by $\mathcal{M}$, and the $K$-ary Cartesian power of $\mathcal{M}$ is denoted by $\mathcal{M}^K$. For $\mu\in \mathcal{M}$ and a Borel
measurable function $g$ which is integrable with respect to $\mu$,  we write $\langle
g,\mu\rangle=\int g\, d\mu$. If $\mu \in \mathcal{M}^K$ and $g$ is integrable with respect to $\mu_k$ for all $k \in \{1, \ldots,K \}$, we write $\langle
g,\mu\rangle = \left( \langle g,\mu_1 \rangle, \langle g,\mu_2 \rangle, \ldots, \langle g,\mu_K \rangle \right)$.
If $g=1_A$ where $A$ is a measurable set, then we simply write $\mu(A)$.
 Let $\langle  g(\cdot - a),\nu \rangle$ denotes $ \int_{[a,\infty)} g(x-a) \nu(dx)$ for all $a>0$.   

The space $\mathcal{M}$ is endowed with the weak topology, where a sequence $(\mu_n)_{n\geq 1} \subset \mathcal{M}$ converges weakly to $\mu \in \mathcal{M}$ if and only if $\langle g , \mu_n \rangle \rightarrow \langle g, \mu \rangle$ for all $g :\mathbb{R}_+ \rightarrow \mathbb{R}$ that are bounded and continuous. In this topology,  $\mathcal{M}$ is a Polish space.
Denote the weak convergence of $\mu_n$ to $\mu$ by $\mu_n \stackrel{w}{\longrightarrow} \mu$. For $\xi , \eta \in \mathcal{M}$, we define on $\mathcal{M}$ the metric $\boldsymbol\rho$ by  
\begin{align*}
\boldsymbol\varrho (\xi , \eta) = \inf \lbrace \, \varepsilon > 0 : \, \xi (B) \leq \eta (B^{\varepsilon}) + \varepsilon  ~ \mbox{ and }  ~ \eta (B) \leq \xi(B^{\varepsilon}) + \varepsilon  ~ \mbox{ for all closed sets} ~ B \subset \mathbb{R}_+  \rbrace.
\end{align*}
This metric that induces the topology of weak convergence on $\mathcal{M}$, is complete.
 The sequence $(\xi_n)_{n \geq 1} \subset \mathcal{M}^K$ converges weakly to $\xi \in \mathcal{M}^K$ if $(\xi_n)_k \stackrel{w}{\longrightarrow} \xi_k $ for all $k \in \mathcal{K}$ as $n \rightarrow \infty$. In this case, we denote the weak convergence of $\xi_n$ to $\xi$ by $\xi_n \stackrel{w}{\longrightarrow} \xi$. 
For $\xi , \eta \in \mathcal{M}^K$, we define the metric $\mathbf{d}$ on $\mathcal{M}^K$     
\begin{align*}
\mathbf{d} (\xi , \eta) = \max_{k \in  [\![ 1,K   ]\!] } \boldsymbol\rho (\xi_k , \eta_k) .
\end{align*} 
The measure $\delta^+_x$ denotes the element
of $\mathcal{M}$ with mass one at $x>0$. The symbol
$\mathbf{0}$ denotes the zero measure of $\mathcal{M}^K$, the
dimension $K$ being always clear from the context.  Let
$\mathcal{M}^{c,K}=\{\xi \in \mathcal{M}^{K}:\xi_{k}(\{x\})=0$ for all
$x\in\mathbb{R}_{+}$ and $k\in \mathcal{K}\}$ be the set of vectors of
finite, non-negative Borel measures on $\mathbb{R}_{+}$ that have no
atoms, and let $\mathcal{M}^{c,p,K}=\{\xi \in \mathcal{M}^{c,K}:\xi
\neq \textbf{0} \}$ be the set of positive measures of
$\mathcal{M}^{c,K}$.

\par All stochastic processes are assumed to be right continuous with finite left
limits. Let $S$ a general metric space and let $L>0$, we denote by $\mathbf{D} ([0,L], S)$ [resp. $\mathbf{D} ([0,\infty), S)$] the space of all right continuous
$S$-valued functions with finite left limits defined on the interval $[0,L]$ (resp. $[0,\infty)$). This space is endowed with the Skorohod $J1$-topology \cite{ethier1986markov}.

\par We will use $\mathbb{P}$ and $\mathbb{E}$
to denote the probability measure and expectation operator with
whatever space the relevant random element is defined on, and
$\Rightarrow$ to denote convergence in distribution of a sequence of
random elements of a metric space.

\par Let $f$ be a locally bounded Borel measurable function and $g$ a right continuous function that is locally bounded. The convolution of $f$ and $g$ is defined by
$ (f \ast g)(x)= \int_{0}^x f(x-y) dg(y)$.
For two matrices of measurable functions $F(\cdot)$ and $G(\cdot)$ defined on
$\mathbb{R}_+$, we denote by the matrix-valued functions $(F\ast
G)(x)$ for $x\in \mathbb{R}_+$, the matrix convolution formed of the
elements: $(F\ast G)_{ij}(x)=\sum_k(F_{ik}\ast G_{kj})(x)$. 
This operation is associative and distributive over matrix addition.
The multiplication by a constant matrix $C$ can be seen as a convolution,
where each element $C_{ij}$ is interpreted as the function
$C_{ij}1_{x\geq 0}$.  Associativity therefore holds for mixed scalar
products and convolutions. 
The $n^{th}$ convolution power of a matrix
$F(x)$ is denoted with $F^{\ast n}(x)$.
For a continuously differentiable function $g$, we write $\dot{g}(x)=\frac{d}{dx}g(x)$. 


\section{The Multiclass Queuing Model}
\label{sec:mult-queue-model}
\vspace*{0.2cm}
\subsection{Primitives data and initials conditions}
\label{subsec:queueing-model}
\vspace*{0.2cm}
For each class $k \in \mathcal{K}$, we assume that there are two  
sequences of random variables, $u_{k}=\{u_{k}(i),i\geq 1\}$ and
$v_k=\{v_{k}(i),i\geq 1\}$ and a  sequence of $K$-dimensional
random vectors $\varphi ^{k}=\{\varphi^{k}(i),i\geq 1\}$, such that $\{u_{k}(i),i\geq 2\}$,
$v_k$ and $\varphi ^{k}$ are i.i.d. $u_k (1)$ is assumed to be independent of $\{u_{k}(i),i\geq 2\}$ and to be strictly positive with finite mean. Each element of $\{u_{k}(i),i\geq 2\}$, $v_k$ and $\varphi^{k}$ takes values respectively in
$\mathbb{R}_+$, $\mathbb{R}^{\ast}_+$ and $\{e_0,e_1,...,e_K\}$, where
$e_0$ is the $K$-dimensional vector of all components $0$, and $e_k$
is the $K$-dimensional vector with $k^{th}$ component being $1$ and other
components being $0$. 
These sequences have the following
interpretation: $ u_{k}(1)$ is the time of the first externally arriving job at class $k$, and for each $i\geq 2$ ,
$u_{k}(i)$ is the interarrival time between the $(i-1)^{th}$ and the
$i^{th}$ externally arriving job at class $k$. For each $i \geq 1$, $v_{k}(i)$ is the
service times for the $i^{th}$ class $k$ job, and $\varphi^{k}(i)$ represent the routing matrix, where $\varphi^{k}(i)=e_l$ with $l = 1 , \cdots , K$ means that the $i^{th}$ job of class $k$ which completes service, returns to the queue as class $l$ job, and $\varphi^{k}(i)=e_0$ means the job leaves the queue. We assume that the sequences
$$ u_1,...,u_K, v_1,...,v_K, \varphi^{1},...,\varphi^{K} $$
are mutually independent. They
constitute the \textit{primitive data} of the queue.
\par From this data the following parameters are derived. The real valued vectors
$\alpha=(\alpha_k, k \in \mathcal{K})$ and $a= (a_k, k \in \mathcal{K})$ are defined as $\alpha_k =
[\mathbb{E}(u_{k}(2))]^{-1}$  and $a_k=var(u_k(2))$ for each $k\in\mathcal{K}$. Denote by
\begin{equation}
\label{eq:cov-arrivee}
 \Pi = diag\{\alpha^3_k a_k, k \in \mathcal{K}\}
\end{equation}
We allow that $\alpha_k=0$ for some
$k$ and we set $\mathcal{A}=\{k:\alpha_k\neq 0\}$. The vector
$\nu=(\nu_1,...,\nu_K)$ is formed of $\nu_k$, the Borel probability
measure of $v_k$, with mean $\beta_k = \langle \chi , \nu_k \rangle > 0$ and variance $b_k =  \langle \chi^2 , \nu_k \rangle - \beta_k^2 < \infty$. Denote by
\begin{equation}
\label{eq:cov-service}
\Sigma=diag\{b_k, k \in \mathcal{K}\}
\end{equation}
It is assumed that for each $k\in\mathcal{K}$, the distribution $\nu_k$ does not charge the origin, i.e $\nu_k(\{0\})=0$.  Let $p_{kl} =\mathbb{P}(\varphi^{k}(1)=e_l)$ be the probability of departing class $k$ and becoming class $l$. Our networks are assumed to be open, that is the routing matrix $P$ satisfies
\begin{equation*}
\label{eq:P}
  Q:=I+ P'+(P')^2+...
\end{equation*}
is finite, which is equivalent to requiring that $(I-P')$ be
invertible, or that $P$ has a spectral radius less than 1. In that case, $Q=(I- P')^{-1}$. Note that for each $k\in \mathcal{K}$
\begin{equation}
 \mathbb{E}(\varphi^k(i))=P^{k}, ~~~\text{and}~~Cov(\varphi^k(i))=H^k, \label{cov:Phi}
\end{equation}
where $P^{k}$ is the $k^{th}$ column of the matrix $P$ and $H^k$ is $K\times K$-matrix defined as
\begin{equation}
 H_{lm}^k=
\left\{
\begin{array}{lll}
p_{kl}(1-p_{kl}) & \text{if} &~~l= m \\
-p_{kl}p_{km} &\text{if} &~~l\neq m.
     \end{array}
\right.
\label{eq:cov-routing}
\end{equation}
For each $k\in \mathcal{K}$, we denote
\begin{align*}
E_{k}(t)=\sup\left\{n: \sum_{i=1}^{n}u_{k}(i)\leq t\right\} , \qquad \Phi_{k}^l(n)=\sum_{i=1}^{n}\varphi_{k}^{l}(i), 
\end{align*}
where $E_k (t)$ is the number of exogenous arrivals to class  $k$ by time $t$, and $\Phi_{k}^l(n)$ is the number of jobs that move from class $l$ to class $k$, among the $n$ first jobs of class $l$.
We denote 
\begin{align*}
E(t)=\left(E_{1}(t),...,E_{K}(t) \right), \quad \Phi(n)= \left(\Phi_{k}^l(n): l,k\in \mathcal{K} \right).
\end{align*}
 The processes $E=(E(t),t\geq 0)$ and $\Phi=(\Phi(n),n\geq 0)$ are the \textit{primitive processes} of our queueing systems.
\par For each $k\in \mathcal{K}$, we assume
that there exists an integer random variable with finite mean $Z_k(0)$
and an i.i.d.~sequence of strictly positive random variables
$v^0_k=\{v_{k}^{0}(i),i\geq 1\}$ with a common Borel probability
measure $\nu _{k}^{0}$, such that
\begin{displaymath}
  v^0_1,...,v^0_K,v_1,...,v_K,\varphi^1,...,\varphi^K, Z_1(0),...,Z_K(0)
\end{displaymath}
are mutually independent. Any job belonging class  $k$ at time zero
in the system is referred to as an ``initial job in class $k$''. Then
let $Z_K(0)$ be the number of initial job in class $k$ and
$v_{k}^{0}(i)$ be the service times requirement of the $i^{th}$
initial job at class $k$. 
\subsection{Queueing equations}
\label{subsec:model-equation}
Given the primitive data and the primitives processes defined in the previous section. Let
$$ A(t)=(A_{1}(t),...,A_{K}(t)),~ D(t)=(D_{1}(t),...,D_{K}(t)), ~Z(t)=(Z_{1}(t),...,Z_{K}(t))$$
be a random processes such that $A_{k}(t), D_{k}(t)$ and $Z_{k}(t)$
are respectively, the total number of arrivals by time $t$ at, the
number of departures by time $t$ from, and the number of jobs
present at time $t$ in, class $k$.  Jointly, those processes satisfy
the following queueing equations:
\begin{align}
  A_{k}(t) &= E_{k}(t)+\sum_{l=1}^{K}\Phi_{k}^l(D_{l}(t))
  \label{eq:arrivee-depart-1}
  \\[0.05cm]
  D_{k}(t) &= \sum_{i=1}^{Z_{k}(0)}1_{\{v_{k}^{0}(i)\leq
    S(t)\}}+\sum_{i=1}^{A_{k}(t)}1_{\{v_{k}(i)\leq S(\sigma_{k}(i),t)\}}
  \label{eq:depart-1}
  \\[0.15cm]
  Z(t) &= Z(0)+A(t)-D(t)
  \label{eq:nbre-client-1} \\[0.2cm] 
  S(t) &= \int_{0}^{t}\varphi (e. Z(s) ) \, ds, ~~ \text{and} ~ S(s,t):=S(t)-S(s).
  \label{eq:cumu-service-1}
\end{align}

Here, for all $k\in \mathcal{K}$, $A_{k}(0)=0$, $\sigma_{k}(i)$ is
the arrival epoch of the $i^{th}$ job to arrive at class $k$, and
$\varphi(.)$ is defined in \eqref{varphi}. 
The function $S(t)$ is known as the cumulative service. It represents
the amount of service received by one particular job in the
interval $[0,t]$. Since the Processor Sharing gives the same amount of
service to all present jobs, this quantity is the same for all
jobs present in the interval.  Hence, $S(s,t):=S(t)-S(s)$ is the amount
of service received in the interval $[s,t]$.
For each $k\in \mathcal{K}$, define the measure-valued function of time
$\mu_{k}:[0,\infty)\rightarrow\mathcal{M}$ by
\begin{equation}
  \mu_{k}(t)
  ~=~
  \sum_{j=1}^{Z_{k}(0)}\delta^+_{(v_{k}^{0}(j)-S(t))^{+}}
  +\sum_{i=1}^{A_{k}(t)}\delta^+ _{(v_{k}(i)-(S(\sigma _{k}(i),t))^{+}}
  ~.
  \label{eq:muk-1}
\end{equation}
At each time $t$, $(v_{k}^{0}(j)-S(t))^{+}$ and
$(v_{k}(i)-(S(\sigma _{k}(i),t))^{+}$ are the residual service
times within class $k$ of, respectively $j^{th}$ initial job, and
$i^{th}$ job. Recall that $\delta^+_x$ is the Borel measure on
$\mathbb{R}_+$ with mass one at $x>0$, and that the random measure
$\mu_{k}(t)$ takes values in the space $\mathcal{M}$ of finite,
positive Borel measures on $\mathbb{R}_{+}$. $\mu_{k}(.)$ is
measure-valued stochastic process with simple path in the polish space
$\mathbf{D}([0,\infty ),\mathcal{M})$. In \cite{gromoll2002fluid}, this process is
referred to as the state descriptor.
The number of jobs of class $k$ at time $t$ is given by
\begin{equation} \label{bef-wor}
Z_{k}(t)=\langle 1,\mu _{k}(t)\rangle.
\end{equation}  
Let $W(t)$ be the workload at time $t$, which is the total amount of residual service times of all jobs in the system at the time $t$, plus the sum of their remaining service times when they re-enter the system until their final departure.
The quantity $\langle \chi, e. \mu(t) \rangle$ represents the total amount of residual service times without taking into consideration the future residual service times. 
In our case, the workload at the time $t$ equals  $\langle \chi, \gamma (t) \rangle$, where the process $\gamma(\cdot)$ is defined in Section \ref{Singleclass}.
\subsection{Fluid model and fluid solution}
\label{sect:fluid-mod}
Let $(\alpha,\nu,P)$ be the parameters associated with primitive data,  we will refer to it simply as data. We define the vector $\lambda = Q \alpha$, where $\lambda_k$ represents the global arrival rate
to the class $k$. We also define the load factor of the queue by $\rho = \sum_{k=1}^K \lambda_k \beta_k = e M \lambda$, where $M=diag\{\beta_k, \, k\in \mathcal{K} \}$. 
\par In this paper, we assume that $\rho=1$, which means the data $(\alpha,\nu,P)$  is \textit{critical} \cite[see Section 2.3]{tahar2012fluid}.
 The following definition is an adaptation of  \cite[Definition 2.1]{tahar2012fluid} to our case. By equations (2.12) and (4.20) in \cite{tahar2012fluid},
the time range $t_\rho= +\infty$ if $\xi \neq 0$, and zero otherwise. Note $\mathcal{V}= diag \{\nu_k , k \in \mathcal{K} \}$.

\begin{definition}[Fluid Solution Model]
  \label{def:Fluid-model}
  Let $(\alpha,\nu,P)$ be some data and $\xi \in \mathcal{M}^{c,K}$
  be an initial state. A fluid solution is a triple $(\bar{A}(t),
  \bar{D}(t), \bar{\mu}(t))$ of two real-, and one measure-valued
  vectors of continuous functions: $\bar{A},$ $\bar{D}: \mathbb{R}_{+}\rightarrow
  \mathbb{R}_{+}^{K}$, and
  $\bar{\mu}=(\bar{\mu}_{1},...,\bar{\mu}_{K}):\mathbb{R}_{+}
  \rightarrow \mathcal{M}^{K}$ such that $\bar{\mu}(0)=\xi$, and
  \begin{itemize}
\item[i)] $\bar{A}$ and $\bar{D}$ are increasing componentwise,
\item[ii)] If $\xi \neq 0$. The triple satisfies the relations 
  \begin{align}
    \bar{A}(t) &= \alpha t+P^{\prime }\bar{D}(t)
    \label{eq:bar-arrivee-depart} \\[0.2cm]
    \langle 1,\bar{\mu}(t)\rangle &=\langle 1,\xi \rangle +\bar{A}
    (t)-\bar{D}(t)
    \label{eq:nbre-classek}\\[0.2cm]
    \bar{\mu}(t)(I(x)) &=
    \xi (I(x+\bar{S}(t)))
    +\int_{0}^{t} \mathcal{V} \left(I \left(x+\bar{S}(s,t) \right) \right) \, d\bar{A}(s),
    \label{eq:evol-barmu}
  \end{align}    
for every $x\in \mathbb{R}_{+},$  and  $t \geq 0$, where
 \begin{align}
 \bar{S}(t) =\int^t_0 \varphi  \left(\langle1,e.\bar{\mu}(s)  \rangle \right)  \, ds, ~~ \text{ and } ~~ \bar{S}(s,t):=\bar{S}(t)-\bar{S}(s).
\label{eq:barS}
\end{align}  
\item[iii)]
If $\xi = 0$, the triple satisfies
\begin{equation}
\label{eq:solution_nul} \bar{A}(t)=\bar{D}(t)=\lambda t~~~\bar{\mu}(t)=0 ~~~ \text{ for all } ~ t\geq 0.
\end{equation}
\end{itemize}
\end{definition}


According to \cite[Theorem 3.1]{tahar2012fluid}, for each measure $\xi \in \mathcal{M}^{c,K}$, there exists a unique fluid solution $(\bar{A}(\cdot),\bar{D}(\cdot),\bar{\mu}(\cdot))$
of the model above such that $\bar{\mu}(0)=\xi$. 
Moreover, the fluid analogue of the workload  process
satisfies for all $t \geq 0$
\begin{align*}
\bar{W}(t)=\bar{W}(0)= e(M^{0}+MP'Q)\bar{Z}(0) ~~ \text{ with} ~~
M^0:= diag \{ \langle \chi,\nu_k (0) \rangle, k \in \mathcal{K} \}.
\end{align*}



\section{Heavy traffic Results} \label{MPS-model}
Consider a sequence of multiclass processor sharing queues indexed
by numbers $r\in (0,\infty)$. Assume that this model is defined on
probability space $(\Omega^r,\mathcal{F}^r ,\mathbb{P}^r)$, and to
have the same basic structure as described in
Section~\ref{subsec:queueing-model}. Furthermore, the number of classes
$K$ and the set $ \mathcal{A}= \lbrace k: \alpha^r_k \neq 0 \rbrace$ remain fixed for all $r$. The primitive increments are
denoted by $u^r_k= \{ u^r_k (i), i \geq 1 \}, v^r_k= \{ v^r_k (i), i \geq 1 \}  $ and 
$\varphi^{r,k} = \{ \varphi^r_k (i), i \geq 1 \}$, for all $k \in \mathcal{K}$. The data of the $rth$ queue is $(\alpha^r,\nu^r,P^r)$. 
The aim of this work is to
establish the limit under the diffusion scaling of $(\mu^r,r>0)$, which
is defined by
\begin{equation*}
 \widehat{\mu}^r(t)=\dfrac{1}{r}\, \mu^r(r^2t).
\end{equation*}
\subsection{ Scaling}
The diffusion scaled versions of the queue length is $\widehat{Z}^r(t) = Z^r(r^2t)/r$.
We denote the following diffusion scaled processes:
$$
\begin{array}{ll}
 \widehat{A}^r(t)  =\dfrac{A^r(r^2t) - \lambda^r \, r^2 t }{r},~~~~& \widehat{D}^r(t)  =\dfrac{D^r(r^2t) - \lambda^r \, r^2 t }{r} , \\ \\
\widehat{E}^r(t)=\dfrac{E^r(r^2t)-\alpha^rr^2t}{r}  ,~~~~& \widehat{\Phi}^{k,r}(t)=\dfrac{\Phi^{k,r}(\lceil r^2t \rceil)-P^{k,r} \lceil r^2t \rceil}{r}  .
\end{array}
$$
\par For a given sequence of processes $(X^r, r > 0)$, the fluid-scaled processes are defined by $\bar{X}^r(t) = X^r(rt)/r$, with the exception of $\bar{S}^r(t) := S^r(rt)$.
\subsection{Heavy traffic conditions} \label{sub-3-2}
To establish results on the convergence of the above sequence of stochastic processes, we need the following conditions, which are quite general and standard. Assume that there are $\theta>0$, a nonnegative vectors $\alpha = (\alpha_1, \ldots, \alpha_K), ~a = (a_1, \ldots, a_K)$, and a nonnegative matrix $P=(p_{kl})_{k,l\in \mathcal{K}} $ with $\rho \, (P)<1$ such that for $k\in\mathcal{K}$ and as $r\to \infty$,
\begin{eqnarray}
\mathbb{E}(u_k^r (1))/r  &\longrightarrow& 0.
\label{eq:assum1}\\
\mathbb{E}(u_k^r (2); u_k^r (2) > r ) &\longrightarrow&  0.
\label{eq:assum2}\\
\limsup_{r \rightarrow \infty} \mathbb{E}^r ((u_k^r(2))^{2+\theta}) &<& \infty.
\label{eq:assum3} \\
\left(\alpha^{r},a^r \right) &\rightarrow & \left(\alpha,a \right).
\label{eq:assum4}\\
P^{r}  &\rightarrow& P.
\label{eq:assum5}
\end{eqnarray}
Let $\nu$  be a vector and of probability measure does not charge the origin and let $(\nu^r,r>0)$ be sequance of distribution of service times defined in section     \ref{subsec:queueing-model}. We assume that
\begin{eqnarray}
\langle \chi^{4+\theta} ,\nu\rangle  &<& \infty,
\label{eq:assum6}\\
\nu^{r}  &\stackrel{w}{\longrightarrow}&  \nu ~~\mbox{as}~~ r\longrightarrow \infty,
\label{eq:assum7}\\
(\beta^r,b^r)  &\longrightarrow&   (\beta,b) ~~\mbox{as}~~ r\longrightarrow \infty,
\label{eq:assum8}\\
\limsup_{r\rightarrow\infty}  \, \langle  \chi^{4+\theta}  ,\nu^{r}\rangle  &<& 
\infty,
\label{eq:assum9}
\end{eqnarray}
Let $\lambda^r =Q^r\alpha^r$, $M^r = diag\{\beta^r_k, k\in \mathcal{K}\}$. The assumptions \eqref{eq:assum4},\eqref{eq:assum5} and \eqref{eq:assum8} imply that $\lambda^r \rightarrow \lambda$ and $M^r \rightarrow M$. Define the the  traffic intensity of the $r^{th}$ system by
$\rho^r=e M^r \lambda^r$. Assume the following heavy traffic condition:
\begin{equation}
 \lim_{r\rightarrow\infty}r(1-\rho^r)=\sigma ~~\mbox{for some}~~\sigma \in \mathbb{R}. 
\label{eq:assum10}
\end{equation}
Let $ \Pi$, $\Sigma$ and $(H^k, k\in \mathcal{K})$ be the matrices defined respectively in \eqref{eq:cov-arrivee}-\eqref{eq:cov-service} and \eqref{eq:cov-routing}. The conditions \eqref{eq:assum3}-\eqref{eq:assum4} imply the Lindeberg conditions, which are essential to prove the functional central limit theorem for the triangular arrays $(u_i^r, i \geq 1, r>0)$ \cite[cf.][Theorem 7.3]{billingsley}.
Thus 
\begin{align}
\left( \widehat{E}^r , \widehat{\Phi}^{k,r}  \right)   \Longrightarrow  \left( E^{\ast}
,\Phi^{\ast,k}  \right), \label{eq:cons-assum1}
\end{align}
where $E^{\ast},~\Phi^{\ast,1},\ldots,\Phi^{\ast,K}$ are 
$K+1$ independent driftless $K$-dimensional Brownian Motions with
respectively covariance matrix $\Pi,H^1,...,H^K$ and initial
state $0$. 
This implies the following functional weak law of
large numbers:
\begin{align} \label{eq:cons-assum2}
\left( \bar{E}^r , \bar{\Phi}^{k,r}  \right)   \Longrightarrow  \left( \alpha(\cdot),  
P^k(\cdot) \right),
\end{align}
where $\alpha (t)=\alpha t$ and $P^k(t)=P^k t$  for all $t \geq 0$. 
We also make assumptions regarding the initial state. For each $k\in\mathcal{K}$, let $v^{0,r}_k=(v^{0,r}_k(i),i\geq 1)$ be an i.i.d sequence of service times of jobs who are initially in the system with  distribution $\nu_k^{0,r}$, and let $\bar{Z}^r_k(0)$ denote the initial number of jobs. Assume that
$$ 
v^{0,r}_1\ldots,v^{0,r}_K,v^{r}_1,\ldots,v^{r}_K,\bar{Z}^r_1(0),\ldots,\bar{Z}^r_K(0)
$$
are mutually independent and $\langle \chi, \nu^{0,r} \rangle$. Moreover, assume that there exists a vector $\bar{Z}(0)=(\bar{Z}_1(0), \cdots , \bar{Z}_K(0)) \in \mathbb{R}_+^K$ and a measure-valued vector $\nu^0 = (\nu^0_1, \cdots, \nu^0_K ) \in \mathcal{M}^{c,K}$ such that,
\begin{eqnarray}
\bar{Z}^r (0) &\Rightarrow& \bar{Z}(0), 
\label{eq:assum-init1}\\
\nu^{0,r} &\stackrel{w}{\longrightarrow}& \nu^0,
\label{eq:assum-init2}\\
\limsup_{r\rightarrow\infty} \langle  \chi^{4+\theta}  ,\nu^{0,r}\rangle  &<& 
\infty,
\label{eq:assum-init3}
\end{eqnarray}
For each $k\in\mathcal{K}$, we define the excess lifetime distribution $\nu^e_k$ associated with $\nu_k$ by $\langle\nu^e_k,1_{[0,x]}\rangle=\beta_k^{-1}\int_0^x\langle \nu_k, 1_{[y,\infty)}\rangle dy$. Let $\nu^e=(\nu^e_1, \ldots,\nu^e_K)$ and $\Lambda=diag\{\lambda_k, k\in \mathcal{K}\}$. The invariant manifold associated with $\nu$ is defined by $ \mathcal{M}^{\nu}=\{c M\Lambda \nu^e ~:c\in\mathbb{R}_+\}$. Denote by $\xi = \mathcal{V}^0 \bar{Z}(0)$, where $\mathcal{V}^0=diag\{\nu^0_k , k\in\mathcal{K}\}$ and assume that
\begin{equation}
\xi \in \mathcal{M}^{\nu}.
\label{eq:assum-init4}
\end{equation}
\subsection{Main Result}
\label{main-result}

In this section, we present our main result Theorem \ref{thm:main-result}. For this we need the following definition.
\begin{definition}  \label{def:lift-map}
Let $\Delta^{\nu}:\mathbb{R}_+\rightarrow \mathcal{M}^{K}$ be the lifting map associated with $\eta$ given by
\begin{equation}
 \Delta^{\nu} w=\frac{w}{e( \frac{1}{2} M^{(2)}+MP^{\prime} QM)  \lambda}  M\Lambda \nu^e, 
\end{equation}
where $M^{(2)}=diag\{\langle \chi^2,\nu_k \rangle, k\in\mathcal{K}\}$.
\end{definition}

\begin{theorem}
\label{thm:main-result} 
Consider a sequence of MPS queues as defined in Section \ref{MPS-model}, satisfying primitive assumptions \eqref{eq:assum1}--\eqref{eq:assum10} and initial condition assumptions \eqref{eq:assum-init1}--\eqref{eq:assum-init4}. Then
\begin{equation}
\widehat{\mu}^r(\cdot) \Rightarrow \Delta^{\nu} W^{\ast}(\cdot) ~~\text{as}~~ r\rightarrow\infty,  \label{eq:mu*}
\end{equation}
where $W^{\ast}$ is a reflected Brownian motion with initial value $W^{\ast}(0) = e(M^0+MP'Q)\bar{Z}(0)$, drift $-\sigma$, and variance $\Gamma$ given by
\begin{equation}
\Gamma=e\left(\Lambda\Sigma+MQ\left(\Pi+\sum_{k=1}^K\lambda_kH^k\right)Q' M\right)e'. \label{eq:cov-RBM}
\end{equation}

\end{theorem}

The reflected Brownian motion $W^{\ast}$ arises as heavy trafic approximation of the workload process (cf. Proposition \ref{cv:workload}).

\vspace*{0.6cm}

\cite{harrison1992brownian} have conjectured that for a multiclass queue operating under the processor sharing discipline, the limiting queue length process for each job class is a constant multiple of the reflected Brownian motion that represents the limiting workload process for the queue (see (A.58), (A.60), and (A.61) in \cite{harrison1992brownian}).
In the single-station multiclass case with probabilistic feedback, their conjecture for the diffusion queue length limit can be expressed as follows:
\begin{equation*}
Z^{\ast}(t) \simeq \Delta_F W_F(t), ~~ \text{ with } ~ \Delta_F = \dfrac{2 M \lambda}{e.M^{(2)} \lambda},   \label{ccc}
\end{equation*}
where $W_F(\cdot)$ is a reflected Brownian motion with drift $-R \theta$ and covariance matrix $R^2 \Gamma$. Here,
$$ R= \dfrac{e M^{(2)} \lambda}{2 e(\frac{1}{2} M^{(2)} +MQP'M) \lambda},$$
and $W_F(\cdot)$ arises as the diffusion limit of the workload scaled process defined in \cite[(A.56)]{harrison1992brownian}.
We can clearly see that the conjecture holds in our case. Specifically, the multiplicative constant is $\langle 1, \Delta^v \rangle = R \Delta_F$, and the limiting workload process is the reflected Brownian motion $W^*(\cdot)$, which coincides with the process $W_F(\cdot)/R$.

\section{Proof of the main result}
\label{sect:Main-steps-proof}


\subsection{Mapping to the single class queue}
\label{Singleclass}
 The purpose of this section is to demonstrate the convergence of the diffusion process $\hat{\gamma}^r(\cdot)$ (cf. Proposition \ref{prop:gamma-global}). To achieve this, we use the fact that the process $\gamma^r(\cdot)$ behaves like a single class, and we leverage results from the literature concerning processor-sharing single-class server \cite{gromoll2004diffusion}. 
To establish Proposition \ref{prop:gamma-global}, two key tasks must be addressed: firstly, ensuring that the process $\gamma^r(\cdot)$ meets the conditions outlined in \cite{gromoll2004diffusion}, which requires demonstrating Lemma \ref{lem:dist-V-k}. Secondly, proving the convergence of the diffusion workload $\hat{W}^r(\cdot)$ process, as indicated in Proposition \ref{cv:workload}.
\vspace{0.2cm}
\par For each $k\in\mathcal{K}$, we define the sequences $\{V^r_k(i); i \geq 1\}$, where $V^r_k(i)$ represents the total service time required by the $i$th exogenous job of class $k$ until their departure from the server. Similarly, we define $\{V_k^{0,r}(i); i \geq 1\}$ as the sequence of total service times for initial jobs of class $k$. We then introduce the measure-valued process $\gamma^r$ as follows:
\begin{equation}
  \gamma^r(t)= \sum_{k=1}^{K}\left(
    \sum_{i=1}^{Z^r_{k}(0)}\delta^+_{\left(
        V_{k}^{r,0}(i)-S^r(t) \right)^{+}}
    +\sum_{i=1}^{E^r_{k}(t)}\delta^+ _{\left(
        V^r_{k}(i)-S(U^r_{k}(i),t) \right)^{+}}
  \right)
  ~.
  \label{eq:gamma-r}
\end{equation} 
Here, $U^r_k(i)$ denotes the arrival time of the $i$th exogenous class-$k$ job. The term $(V^r_{k}(i)-S^r(U^r_{k}(i),t))^{+}$ represents the residual service time of the $i$th exogenous class-$k$ job.  This quantity captures the remaining service required for this job across all potential future class visits before its eventual departure from the system. Similarly, $(V_{k}^{r,0}(i)-S^r(t))^{+}$ reflects this for initial jobs of class $k$.
The measure $\gamma^r(t)$ represents the total residual service time, including service required due to future feedback (i.e., until final departure), for all jobs present at time $t$. This includes both exogenous arrivals and jobs initially present in the system. 

 A job is considered present at time $t$ if and only if its residual service time is positive. In particular, the number of jobs present in the system at time $t$ is determined by
\begin{equation}
 \label{eq:mass-total}
\langle 1, \gamma^r(t) \rangle = e\cdot Z^r(t) ~~~\mbox{for}~~t\geq 0.
\end{equation}
This implies that equations \eqref{eq:muk-1} and \eqref{eq:gamma-r} are coupled by \eqref{eq:mass-total}. 
Let $W^r(t)$ denote the workload at time $t$, which is the total amount of residual service times of all jobs in the system at the time $t$, plus the sum of their remaining service times when they re-enter the system until their final departure. This satisfies
\begin{equation}
 \label{eq:workload}
W^r(t)= \langle \chi ,\gamma^r(t) \rangle~~~\mbox{for}~~t\geq0.
\end{equation}
Denote by $\mathcal{B}(t)=(\mathcal{B}_{kl}(t))_{0\leq k,l\leq K}$ the matrix function defined by 
\begin{equation}
\label{eq:mathcal-B}
    \mathcal{B}(t)=\sum_{n\geq 0}\left( BP^{\prime} \right)^{*n}(t),
\end{equation}
where $B(t)=diag\{B_k(t),~k\in\mathcal{K}\}$, and $B_k(t)$ is the distribution function of $\nu_k$. 
 $\mathcal{B}(\cdot)$ aggregates the total service time a job spends in the system, accounting for all possible paths via the feedback. Each term  corresponds to the contribution from jobs who have undergone exactly $n$ transitions between classes.

We recall the following notations which will be used repeatedly in this paper: $\mathcal{V}$ and $\mathcal{V}^0$ are diagonal matrices with elements $\nu_k$ and $\nu^0_k$, respectively. Similarly, $\mathcal{B} \ast \mathcal{V}$ and $\mathcal{B} \ast \mathcal{V}^0$ denote matrices with elements $\mathcal{B}_{kl}\ast\nu_l$ and $\mathcal{B}_{kl}\ast\nu_l^0$ for $k,l\in\mathcal{K}$. The same notation is used when the parameters are indexed by superscript $r$.

 The following lemma is very useful, as the measures $\mathcal{B} \ast \mathcal{V}$ and $\mathcal{B} \ast \mathcal{V}^0$, along with their respective counterparts indexed by $r$, are frequently used 
throughout the paper.
\begin{lemma}
\label{lem:conv-zeta-r}
Consider a sequence of MPS queues as defined in Section \ref{MPS-model}, satisfying assumptions  \eqref{eq:assum5}, \eqref{eq:assum7} and \eqref{eq:assum-init2}. Then, we have
\begin{gather}
\langle 1,\mathcal{B} \ast\mathcal{V}\rangle=Q ~~ \text{ and } ~~ \langle 1,\mathcal{B} \ast\mathcal{V}^0\rangle=Q
 \label{eq:b-ast-nu0}\\
 \langle \chi,\mathcal{B} \ast\mathcal{V}\rangle=QMQ ~~ \text{ and } ~~ \langle \chi,\mathcal{B} \ast\mathcal{V}^0\rangle= Q(M^0+MP'Q)
 \label{eq:b-ast-nu1}\\
 \langle \chi^2,\mathcal{B} \ast\mathcal{V}\rangle=Q(M^2+2MQP^{\prime}M)Q
 \label{eq:b-ast-nu2}\\
 \mathcal{B}^r \ast\mathcal{V}^r \stackrel{w}{\longrightarrow} \mathcal{B} \ast\mathcal{V}.
 \label{eq:b-ast-nu3}
 \end{gather}
If there exists $\theta>0$ such that either $\langle \chi^{\theta},\mathcal{V}\rangle <\infty$, then 
 \begin{equation}
 \langle \chi^{\theta},\mathcal{B} \ast \mathcal{V}\rangle <\infty.
 \label{eq:b-ast-nu4}
 \end{equation}
Furthermore, if $\displaystyle\limsup_{r \rightarrow \infty} \langle \chi^{\theta},\mathcal{V}^r \rangle <\infty $, then
 \begin{equation}
 \displaystyle\limsup_{r \rightarrow \infty} \langle \chi^{\theta}, \mathcal{B}^r \ast\mathcal{V}^r \rangle <\infty. 
 \label{eq:b-ast-nu5}
\end{equation}
The results in \eqref{eq:b-ast-nu3}-\eqref{eq:b-ast-nu5} remain valid when substituting $\mathcal{V}$ and $\mathcal{V}^r$  with $\mathcal{V}^0$ and $\mathcal{V}^{0,r}$.
\end{lemma}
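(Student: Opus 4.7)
The central tool is the Laplace transform. For a finite measure $\mu$ on $\mathbb{R}_+$, let $\hat\mu(s) = \int_0^\infty e^{-sx}\,d\mu(x)$, extended entrywise to matrices of measures. Because convolution becomes multiplication under $\hat{\cdot}$, the series definition \eqref{eq:mathcal-B} collapses to
\[
  \hat{\mathcal B}(s) \;=\; \sum_{n\ge 0}\bigl(\hat B(s) P'\bigr)^n \;=\; \bigl(I - \hat B(s) P'\bigr)^{-1}, \qquad s\ge 0,
\]
where the series converges since $\hat B(s)$ has entries in $[0,1]$ and $\rho(P)<1$. Setting $s=0$ gives $\hat{\mathcal B}(0)=(I-P')^{-1}=Q$, and convolving against $\mathcal V$ or $\mathcal V^0$ (whose total masses equal the identity) yields \eqref{eq:b-ast-nu0} immediately.

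For the moment identities \eqref{eq:b-ast-nu1}--\eqref{eq:b-ast-nu2}, I would expand $\hat B(s)=I-Ms+\tfrac12 M^{(2)}s^2+O(s^3)$ and differentiate $\hat{\mathcal B}(s)=f(s)^{-1}$ with $f(s)=I-\hat B(s)P'$. Standard calculus gives
\[
  \langle\chi,\mathcal B\rangle \;=\; -\hat{\mathcal B}'(0) \;=\; QMP'Q, \qquad
  \langle\chi^2,\mathcal B\rangle \;=\; \hat{\mathcal B}''(0) \;=\; 2QMP'QMP'Q + QM^{(2)}P'Q .
\]
Combining with the binomial identity $\langle\chi^k,\mu*\nu\rangle=\sum_{j=0}^k\binom{k}{j}\langle\chi^j,\mu\rangle\langle\chi^{k-j},\nu\rangle$ applied entrywise (noting $(\mathcal B*\mathcal V)_{ij}=\mathcal B_{ij}*\nu_j$ since $\mathcal V$ is diagonal), and using the identity $QP'=P'Q=Q-I$ to collapse factors of the form $P'Q+I$ into $Q$, the stated expressions \eqref{eq:b-ast-nu1}--\eqref{eq:b-ast-nu2} follow by routine algebra. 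The $\mathcal V^0$-versions are identical after replacing the outermost $M$ by $M^0$.

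For the weak convergence \eqref{eq:b-ast-nu3}, the assumption $\nu^r\stackrel{w}{\to}\nu$ gives $\hat B^r(s)\to\hat B(s)$ pointwise on $[0,\infty)$ by bounded convergence; combined with $P^r\to P$ and continuity of matrix inversion at $I-P'$, this yields $\hat{\mathcal B}^r(s)\to\hat{\mathcal B}(s)$. Multiplying by $\hat{\mathcal V}^r(s)\to\hat{\mathcal V}(s)$ produces pointwise convergence of Laplace transforms of $\mathcal B^r*\mathcal V^r$ to that of $\mathcal B*\mathcal V$. Since the total masses $Q^r\to Q$ are bounded, the extension of the L\'evy continuity theorem to finite positive measures (equivalently, applying it after normalising each entry by its total mass) yields weak convergence entrywise in $\mathcal M$.

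For the moment bounds \eqref{eq:b-ast-nu4}--\eqref{eq:b-ast-nu5}, apply the $c_\theta$-inequality $(x+y)^\theta\le c_\theta(x^\theta+y^\theta)$ inductively to the $n$-fold convolution $(BP')^{*n}*\mathcal V$, bounding each entry by $C_\theta\,n^{\max(\theta,1)}\,\langle\chi^\theta,\nu_{\max}\rangle\,\|(P')^n\|$. Summing over $n$ converges because $\rho(P)<1$ forces $\|(P')^n\|=O(\rho_0^n)$ for some $\rho_0<1$, dominating the polynomial factor. For \eqref{eq:b-ast-nu5} the same bound is uniform in $r$: the hypothesis gives a uniform $\theta$-moment bound on $\nu^r_k$, and the convergence $P^r\to P$ with $\rho(P)<1$ ensures $\rho(P^{r\prime})\le\rho_1<1$ for all $r$ large, so the geometric tail of the series is uniformly controlled. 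The replacements $\mathcal V^0,\mathcal V^{0,r}$ proceed identically. The main technical obstacle is precisely this uniform tail control: once one knows that the partial sums defining $\mathcal B^r$ converge uniformly in $r$ (for the Laplace transforms and for the $\theta$-moments), the interchange of $\lim_{r\to\infty}$ with $\sum_{n\ge 0}$ is routine, and the entire lemma reduces to the finite-$n$ computations above.
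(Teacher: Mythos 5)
Your proposal is correct, and in two places it takes a genuinely different route from the paper. For the moment identities \eqref{eq:b-ast-nu0}--\eqref{eq:b-ast-nu2}, the paper does not differentiate the Laplace transform $\hat{\mathcal B}(s)=(I-\hat B(s)P')^{-1}$ as you do; it instead starts from the renewal identity $\mathcal B = I + BP'\ast\mathcal B$, applies the binomial expansion to $\langle\chi^p,\mathcal B\ast\mathcal V\rangle$, and solves the resulting linear relation for $p=0,1,2$ using $Q=(I-P')^{-1}$ (both give $\langle\chi^2,\mathcal B\ast\mathcal V\rangle=Q(M^{(2)}+2MP'QM)Q$; the displayed $M^2$ in \eqref{eq:b-ast-nu2} appears to be a typo for $M^{(2)}$). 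Your differentiation route is more mechanical but requires care with the noncommutative second derivative $(f^{-1})''=2f^{-1}f'f^{-1}f'f^{-1}-f^{-1}f''f^{-1}$, which you handle correctly. For \eqref{eq:b-ast-nu3} both proofs use the Laplace transform identity $\widehat{\mathcal B\ast B}=(I-\hat B P')^{-1}\hat B$ and pointwise convergence, so that part is the same. The largest divergence is in \eqref{eq:b-ast-nu4}--\eqref{eq:b-ast-nu5}: the paper again exploits the recursion $\mathcal B = I + BP'\ast\mathcal B$, proving the bound first for rational $\theta=p/q$ via the elementary inequality $(x+y)^{1/q}\le x^{1/q}+y^{1/q}$ and induction on $p$, then extending to irrational $\theta$ by a monotone-convergence density argument; your approach bounds the $\theta$-moment of each term $(BP')^{*n}\ast\mathcal V$ directly by $n^{\max(\theta,1)}\max_k\langle\chi^\theta,\nu_k\rangle$ (via the $c_\theta$/power-mean inequality) times a path weight $\le\|(P')^n\|\le C\rho_0^n$, and sums the geometrically decaying series. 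Your route is cleaner in that it sidesteps the rational/irrational case split entirely and makes the uniformity in $r$ for \eqref{eq:b-ast-nu5} transparent (pick $\rho_1\in(\rho(P),1)$, use $P^r\to P$, and use the uniform moment bound on $\nu^r$); the paper's route avoids needing the geometric estimate $\|(P')^n\|\le C\rho_0^n$ but pays for it with the density argument. Both are sound.
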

\begin{proof} From the identity $\mathcal{B}=I+BP^{\prime}\ast\mathcal{B}$, the definition of product convolution and  Newton's binomial we have for all $p\in\mathbb{N}$ 
\begin{eqnarray}
 \langle \chi^p,\mathcal{B}\ast\mathcal{V} \rangle &=& \langle \chi^p,\mathcal{V} \rangle+ \langle \chi^p,BP^{\prime}\ast(\mathcal{B}\ast\mathcal{V}) \rangle
 \nonumber\\
  &=&\langle \chi^p,\mathcal{V} \rangle+\int_0^{\infty}\int_0^{\infty}(x+y)^p \mathcal{V}(dx)P^{\prime} \mathcal{B}\ast\mathcal{V}(dy)
  \nonumber\\
  &=& \langle \chi^p,\mathcal{V} \rangle+\sum_{k=0}^pC_p^k \langle \chi^{p-k}, \mathcal{V}\rangle P^{\prime} \langle \chi^k, \mathcal{B}\ast\mathcal{V}\rangle
 \label{eq:moment-entier-1}
\end{eqnarray}
Similarly, we get
\begin{align}
\langle \chi^p,\mathcal{B}\ast\mathcal{V}^0 \rangle =\langle \chi^p,\mathcal{V}^0 \rangle+\sum_{k=0}^p C_p^k \langle \chi^{p-k}, \mathcal{V}\rangle P^{\prime} \langle \chi^k, \mathcal{B}\ast\mathcal{V}^0\rangle. \label{eq:moment-entier-1-initial}
\end{align}
Rewritting \eqref{eq:moment-entier-1} and \eqref{eq:moment-entier-1-initial} for $p=0$, $p=1$ and $p=2$  and using the fact that $\langle 1,\mathcal{V} \rangle=I$, $\langle 1,\mathcal{V}^0 \rangle=I$ and $Q=(I-P^{\prime})^{-1}$ we have respectively  \eqref{eq:b-ast-nu0},  \eqref{eq:b-ast-nu1} and \eqref{eq:b-ast-nu2}.
\par Let $\widehat{B}$ and $\widehat{\mathcal{B}}$ represent the Laplace transform functions of the distribution associated with $\nu$ and the matrix function $\mathcal{B}$, respectively. 
The distribution function of $\mathcal{B}\ast\mathcal{V}$ is denoted by $\mathcal{B}\ast B$, with Laplace transform
\begin{equation*}
 \widehat{\mathcal{B}\ast B}(t)=\widehat{\mathcal{B}}(t).\widehat{B}(t)=\left(I-\widehat{B}(t)P^{\prime}\right)^{-1}\widehat{B}(t)
\end{equation*}
We use the same notation when the functions are indexed by $r$. By assumption, we have $\widehat{B}^r(t)\to \widehat{B}(t)$ and, through a simple argument, we verify that $\widehat{\mathcal{B}^r}(t)\to \widehat{\mathcal{B}}(t)$ for all $t\geq 0$. Therefore,
$\widehat{\mathcal{B}^r\ast B^r}(t)\to \widehat{\mathcal{B}\ast B}(t)$. 
This implies \eqref{eq:b-ast-nu3}
\par Let $p\in\mathbb{N}$ and $q\in\mathbb{N}^*$, by the same argument as to obtain \eqref{eq:moment-entier-1} and using the inequality $(x+y)^{1/q}\leq x^{1/q}+y^{1/q}$ for $x,~y\geq 0$ we have
 \begin{eqnarray*}
  \langle \chi^{p/q},\mathcal{B}\ast\mathcal{V} \rangle&=&\langle \chi^{p/q},\mathcal{V} \rangle+\langle \chi^{p/q},BP^{\prime}\ast(\mathcal{B}\ast\mathcal{V}) \rangle
  \nonumber\\
  &=& \langle \chi^{p/q},\mathcal{V} \rangle+ \int_0^{\infty}\int_0^{\infty}\left((x+y)^{1/q}\right)^p \mathcal{V}(dx)P^{\prime} \mathcal{B}\ast\mathcal{V}(dy)
  \nonumber\\
  &\leq&\langle \chi^{p/q},\mathcal{V} \rangle+\int_0^{\infty}\int_0^{\infty}(x^{1/q}+y^{1/q})^p \mathcal{V}(dx)P^{\prime} \mathcal{B}\ast\mathcal{V}(dy)
  \nonumber\\
  &\leq&\langle \chi^{p/q},\mathcal{V} \rangle+ \sum_{k=0}^pC_p^k \langle (\chi^{1/q})^{p-k}, \mathcal{V}\rangle P^{\prime} \langle (\chi^{1/q})^k, \mathcal{B}\ast\mathcal{V}\rangle
  \nonumber\\
 &\leq&\langle \chi^{p/q},\mathcal{V} \rangle+ P^{\prime}\langle \chi^{p/q}, \mathcal{B}\ast\mathcal{V}\rangle+ \sum_{k=0}^{p-1}C_p^k \langle \chi^{(p-k)/q}, \mathcal{V}\rangle P^{\prime}\langle \chi^{k/q}, \mathcal{B}\ast\mathcal{V}\rangle.
 \end{eqnarray*}
Because $Q=(I-P^{\prime})^{-1}$  exists, we have 
\begin{equation}
\label{eq:moment-rational-1}
 \langle \chi^{p/q},\mathcal{B}\ast\mathcal{V} \rangle\leq Q\langle \chi^{p/q},\mathcal{V} \rangle +Q \sum_{k=0}^{p-1}C_p^k \langle \chi^{(p-k)/q}, \mathcal{V}\rangle P^{\prime}\langle \chi^{k/q}, \mathcal{B}\ast\mathcal{V}\rangle.
\end{equation}
Thus, by induction on $p$, we establish \eqref{eq:b-ast-nu4} for a rational number.
 Let  $\theta$ be be a strictly positive real number. Due to the density of $\mathbb{Q}$ in $\mathbb{R}$, there exists a strictly decreasing sequence $(\theta_n,n\geq 0)$  of rational numbers such that $\theta_n\to\theta$ as $n\to\infty$. 
 Assume that $\langle \chi^{\theta},\mathcal{V} \rangle<\infty$ and prove that \eqref{eq:b-ast-nu4} holds. 
 Considering the increasing sequence of function $\chi^{\theta_n}1_{[0,1]}$. By the Convergence Monotone Theorem, we have   
\begin{equation*}
\label{eq:lim-sup-r1}
\langle \chi^{\theta_n}1_{[0,1]},\mathcal{V} \rangle \nearrow \langle \chi^{\theta}1_{[0,1]},\mathcal{V} \rangle ~~ \text{ as } n\to\infty.
\end{equation*}
Similarly, for the decreasing sequence of functions
$\chi^{\theta_n}1_{(1,\infty)}$,  again by the Monotone Convergence Theorem, we have 
\begin{equation*}
\label{eq:lim-sup-r2}
\langle \chi^{\theta_n}1_{(1,\infty)},\mathcal{V} \rangle\searrow \langle \chi^{\theta}1_{(1,\infty)},\mathcal{V} \rangle ~~ \text{ as } n\to\infty.
\end{equation*}
Consequently, 
$$\langle \chi^{\theta_n},\mathcal{V} \rangle \longrightarrow \langle \chi^{\theta},\mathcal{V} \rangle ~~ \text{ as } n\to\infty.$$  
 Hence, there exists $n_0$ such that for all $n\geq n_0$, we have $\langle \chi^{\theta_n},\mathcal{V} \rangle<\infty$, implying
$ \langle \chi^{\theta_n},\mathcal{B}\ast\mathcal{V} \rangle<\infty$ for all $n\geq n_0$.
Therefore, as we let $n \rightarrow \infty$, we obtain 
$$ \langle \chi^{\theta},\mathcal{B}\ast\mathcal{V} \rangle < \infty.$$
\par The proof of \eqref{eq:b-ast-nu5} follows by employing a similar argument to that used in proving \eqref{eq:b-ast-nu4}.
\end{proof}

In the following lemma, we present the conditions under which the diffusion approximation of $\gamma^r(\cdot)$ holds.
\begin{lemma} 
\label{lem:dist-V-k}
 For each $k\in\mathcal{K}$ and $r>0$, both sequences $\{V^r_k(i); i \geq 1\}$ and $\{V^{0,r}_k(i); i \geq 1\}$ are  i.i.d with distribution respectively 
\begin{equation*}
\label{law-nu-k}
\zeta^r_k   = (e(I-P^{\prime,r})(\mathcal{B}^r \ast \mathcal{V}^r))_k~~\mbox{and}~~\zeta^{0,r}_k  = (e(I-P^{\prime,r})(\mathcal{B}^r \ast \mathcal{V}^{0,r}))_k,
\end{equation*}
Moreover, for each $k\in\mathcal{K}$ we denote by $\zeta_k = (e(I-P^{\prime})(\mathcal{B} \ast \mathcal{V}))_k$ and $\bar{\gamma}(0)=\zeta^0.\bar{Z}(0)$. If conditions \eqref{eq:assum5}-\eqref{eq:assum9} and \eqref{eq:assum-init1}-\eqref{eq:assum-init4} hold. Then we have 
\begin{gather}
 \langle \chi^{4+\theta},\zeta_k\rangle<\infty
 \label{eq:zeta-1}\\
 \limsup_{r \rightarrow \infty} \langle\chi^{4+\theta},\zeta^r_k\rangle<\infty
 \label{eq:zeta-2}\\
 \zeta^r_k \stackrel{w}{\longrightarrow}  \zeta_k ~~\mbox{as}~~r\to\infty
  \label{eq:zeta-3} \\
  \langle\chi, \zeta^r_k \rangle \longrightarrow \langle\chi, \zeta_k \rangle
  \label{eq:zeta-4} \\
  \langle\chi^2 ,\zeta^r_k \rangle \longrightarrow \langle\chi^2, \zeta_k \rangle
  \label{eq:zeta-5} \\
  (\bar{\gamma}^r(0),\langle\chi,\bar{\gamma}^r(0)\rangle,\langle\chi^{1+\theta},\bar{\gamma}^r(0)\rangle)\Rightarrow  (\bar{\gamma}(0),\langle\chi,\bar{\gamma}(0)\rangle,\langle\chi^{1+\theta},\bar{\gamma}(0)\rangle).
 \label{eq:zeta-6}
\end{gather}
\end{lemma}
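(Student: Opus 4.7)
The plan is to break the lemma into four natural blocks: the distributional identification of $V^r_k(i)$ and $V^{0,r}_k(i)$, the moment and weak-convergence properties of $\zeta^r_k$ given by \eqref{eq:zeta-1}--\eqref{eq:zeta-5}, and the joint convergence of $\bar{\gamma}^r(0)$ in \eqref{eq:zeta-6}. All but the first block reduce to direct application of the already-proved Lemma \ref{lem:conv-zeta-r}.

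For the distributional identification I would view an exogenous class-$k$ customer as performing an independent random walk on $\mathcal{K} \cup \{0\}$ with transition matrix $P^r$ extended by $p^r_{k0} := 1 - \sum_l p^r_{kl}$ and state $0$ absorbing, collecting independent service samples along the visited states (the first from $\nu^r_k$, each subsequent visit to class $l$ from $\nu^r_l$). Conditioning on a path $k = c_0 \to c_1 \to \cdots \to c_n \to 0$ contributes $\nu^r_{c_0} \ast \nu^r_{c_1} \ast \cdots \ast \nu^r_{c_n}$ weighted by $p^r_{c_0 c_1} \cdots p^r_{c_n 0}$. Summing over all paths and recognizing the geometric series $\mathcal{B}^r = \sum_n (B^r P^{\prime,r})^{\ast n}$ one obtains $\zeta^r_k = (e(I-P^{\prime,r})(\mathcal{B}^r \ast \mathcal{V}^r))_k$; a quick Laplace-transform check using $\widehat{\mathcal{B}^r}(s) = (I - \widehat{\nu}^r_{\mathrm{diag}}(s) P^{\prime,r})^{-1}$ confirms the algebraic identity. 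The same argument with $\nu^{0,r}_k$ at the first jump gives $\zeta^{0,r}_k$. Independence across $i$ follows from a coupling: since all primitive sequences are i.i.d., I may equivalently assign each exogenous (resp.\ initial) customer a private copy of all relevant primitive sequences without altering the joint law of the state descriptor, and in that equivalent model different $V^r_k(i)$ are built from disjoint independent inputs.

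The moment and convergence assertions then follow directly from Lemma \ref{lem:conv-zeta-r}. Since $\zeta^r_k = \sum_j p^r_{j0} (\mathcal{B}^r \ast \mathcal{V}^r)_{jk}$ is a positive linear combination of entries of $\mathcal{B}^r \ast \mathcal{V}^r$, the bounds \eqref{eq:b-ast-nu4}--\eqref{eq:b-ast-nu5} applied with exponent $4+\theta$ (hypotheses supplied by \eqref{eq:assum6}, \eqref{eq:assum9}, \eqref{eq:assum-init3}) give \eqref{eq:zeta-1}--\eqref{eq:zeta-2}; \eqref{eq:b-ast-nu3} together with $P^r \to P$ gives \eqref{eq:zeta-3}; and the explicit matrix identities \eqref{eq:b-ast-nu1}--\eqref{eq:b-ast-nu2} combined with $P^r \to P$, $M^r \to M$, $M^{(2),r} \to M^{(2)}$ give \eqref{eq:zeta-4}--\eqref{eq:zeta-5}. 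For \eqref{eq:zeta-6}, write
\begin{equation*}
\bar{\gamma}^r(0) = \frac{1}{r} \sum_{k \in \mathcal{K}} \sum_{i=1}^{Z^r_k(0)} \delta^+_{V^{0,r}_k(i)},
\end{equation*}
pass to a Skorohod representation so that $\bar{Z}^r(0) \to \bar{Z}(0)$ a.s., and apply a triangular-array Glivenko--Cantelli argument to the empirical measure of the $V^{0,r}_k(i)$ (which are i.i.d.\ $\zeta^{0,r}_k$ with $\zeta^{0,r}_k \stackrel{w}{\to} \zeta^0_k$), giving $\bar{\gamma}^r(0) \Rightarrow \zeta^0 \cdot \bar{Z}(0) = \bar{\gamma}(0)$. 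The scalar companions $\langle \chi^p, \bar{\gamma}^r(0) \rangle$ for $p \in \{1, 1+\theta\}$ converge to $\langle \chi^p, \bar{\gamma}(0) \rangle$ by a strong law on the partial sums, with the uniform integrability needed to pass to the $(1+\theta)$-th moment supplied by the $(4+\theta)$-moment bound \eqref{eq:zeta-2}; joint convergence of the triple is then immediate from the common Skorohod coupling.

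The main obstacle is the independence claim in the first step. In the paper's primitive model the sequences $v^r_k, \varphi^{r,k}$ are indexed by global order of class-$k$ visits, so the indices a given exogenous customer actually touches depend in an intricate random way on the routing decisions of all earlier customers, and different $V^r_k(i)$ are not visibly built from disjoint slots. The cleanest remedy is the coupling/reorganization argument described above, whose justification --- based on exchangeability of i.i.d.\ sequences and the Markovian and state-independent nature of $P^r$ --- is where the real technical work lies. Once that identification is in hand, everything else collapses to invocation of Lemma \ref{lem:conv-zeta-r} and routine triangular-array LLN and uniform-integrability bookkeeping.
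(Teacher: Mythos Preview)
Your proposal is correct and follows essentially the same route as the paper. The paper delegates the distributional identification and i.i.d.\ claim entirely to Lemma~C.1 of \cite{tahar2012fluid}, whereas you sketch the underlying path-decomposition and coupling argument; for \eqref{eq:zeta-1}--\eqref{eq:zeta-5} both you and the paper simply invoke Lemma~\ref{lem:conv-zeta-r}; and for \eqref{eq:zeta-6} the paper uses the weak law for triangular arrays plus a random time change (with moment convergence coming from \cite[Lemma~3.5]{gromoll2004diffusion}), while you use Skorohod representation plus a Glivenko--Cantelli/uniform-integrability argument---these are equivalent packagings of the same idea, and the paper's observation that joint convergence is automatic because the limit is deterministic matches your common-coupling remark.

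One small slip: at the very end you cite \eqref{eq:zeta-2} for the uniform integrability of $\langle \chi^{1+\theta},\bar\gamma^r(0)\rangle$, but that bound concerns $\zeta^r_k$, not $\zeta^{0,r}_k$. What you need is the $\zeta^{0,r}$ analogue, which you correctly set up earlier via \eqref{eq:assum-init3} and Lemma~\ref{lem:conv-zeta-r}; just point to that instead.
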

\begin{proof}
The first part of this lemma is proven in Lemma C.1 \cite{tahar2012fluid}. 
 Conditions \eqref{eq:zeta-1}-\eqref{eq:zeta-5}
  follow from conditions
\eqref{eq:assum5}-\eqref{eq:assum9} and Lemma \ref{lem:conv-zeta-r}.  
From \eqref{eq:gamma-r}, we have
 \begin{equation}
  \bar{\gamma}^r(0)= \frac{1}{r} \sum_{k=1}^{K}  \sum_{i=1}^{r\bar{Z}^r_{k}(0)}\delta^+_{V_{k}^{r,0}(i)}.
 \end{equation}
Fix $k\in\mathcal{K}$. The sequence $\{V^{0,r}_k(i); i \geq 1\}$ is i.i.d. with distribution $\zeta_k^{0,r}$, which converges weakly to $\zeta_k^{0}$ as $r\to\infty$ by condition \eqref{eq:assum-init2} and Lemma \ref{lem:conv-zeta-r}.
Therefore, for any continuous and bounded function $g:\mathbb{R}^+\to\mathbb{R}$,
\begin{align}
\mathbb{E}(g(V^{0,r}_k(i))) = \langle g, \zeta_k^{0,r} \rangle \rightarrow  \mathbb{E}(g(V^{0}_k(i))) = \langle g, \zeta_k^{0} \rangle <\infty  \label{bound:EV0}
\end{align}
This implies that
\begin{align*}
\lim_{r \rightarrow \infty} \mathbb{E}\left( g(V^{0,r}_k(i)) \,;\, g(V^{0,r}_k(i))>r \right) = 0.
\end{align*}
Then, by the weak law of large numbers for triangular arrays, we have
\begin{align*}
\dfrac{1}{r} \sum_{i=1}^{\lfloor rt \rfloor} g(V^{0,r}_k(i))\Rightarrow \langle g,\zeta^0_k \rangle t.
\end{align*}
Since the sequence of random variables $\bar{Z}^r(0)$ satisfies \eqref{eq:assum-init1}, the random time change formula yields
\begin{align*}
\dfrac{1}{r} \sum_{i=1}^{ r \bar{Z}^r_k(0)} g(V^{0,r}_k(i))\Rightarrow \langle g,\zeta^0_k \rangle \bar{Z}_k(0).
\end{align*}
Therefore,
\begin{align}
\langle g, \bar{\gamma}^r(0)\rangle\Rightarrow\langle g, \bar{\gamma}(0)\rangle, \label{eq:cv-gamma_g}
\end{align} 
for all continuous and bounded functions  $g:\mathbb{R}^+\to\mathbb{R}$.
Therefore, component of \eqref{eq:zeta-6} satisfies
 \begin{equation}
  \label{eq:cv-gamma0}
  \bar{\gamma}^r(0)\Rightarrow\bar{\gamma}(0).
 \end{equation}
On the other hand, by \eqref{eq:assum-init2}-\eqref{eq:assum-init3} and Lemma \ref{lem:conv-zeta-r}, we have 
$$ \mathcal{B}^r \ast \mathcal{V}^{0,r} \stackrel{w}{\longrightarrow}  \mathcal{B} \ast \mathcal{V}^{0}  ~~ \text{ and } ~~ \limsup_{r \rightarrow \infty} \langle \chi^{4+\theta}, \mathcal{B}^r \ast \mathcal{V}^{0,r} \rangle < \infty.$$
Hence, by applying Lemma 3.5 \cite{gromoll2004diffusion} to the measure $\mathcal{B}^r \ast \mathcal{V}^{0,r}$, we obtain 
\begin{equation}
\langle \chi^{1+p}, \mathcal{B}^r \ast \mathcal{V}^{0,r} \rangle \longrightarrow \langle \chi^{1+p}, \mathcal{B} \ast \mathcal{V}^0 \rangle < \infty, \label{bound:Bv0}
\end{equation} 
 for all $p \in [0,3+\theta)$. 
 Consequently
 \begin{equation}
\langle \chi,\zeta^{0,r}\rangle\rightarrow\langle \chi,\zeta^{0}\rangle~~\mbox{and}~~\langle \chi^{1+\theta},\zeta^{0,r} \rangle\rightarrow\langle \chi^{1+\theta},\zeta^{0} \rangle.
 \label{bound:zet0}
\end{equation} 
Following the same argument from \eqref{bound:EV0} to \eqref{eq:cv-gamma_g},
 we obtain
\begin{equation}
 \label{eq:cv-gamma1}
 \langle \chi,\bar{\gamma}^r(0)\rangle\Rightarrow\langle \chi,\bar{\gamma}(0)\rangle~~\mbox{and}~~\langle \chi^{1+\theta},\bar{\gamma}^r(0)\rangle\Rightarrow\langle \chi^{1+\theta},\bar{\gamma}(0)\rangle.
\end{equation}
As the limits in \eqref{eq:cv-gamma0} and \eqref{eq:cv-gamma1} are determinstic, then \eqref{eq:zeta-6} holds.
\end{proof}

By Proposition 5.2 \cite{tahar2012fluid}, the sequence of the fluid scaled state descriptor $\bar{\gamma}^r (\cdot)$ converges in distribution to $\bar{\gamma}(\cdot)$ as $r\rightarrow\infty$, where $\bar{\gamma}$ is the fluid solution of the following fluid model:
\begin{itemize}
 \item[(i)] $\bar{\gamma}:[0,\infty)\to\mathcal{M}^K$ is continuous function such that
 for all $t\geq 0$ and $x\geq 0$,
 \begin{equation}
 \bar{\gamma}(t)(I(x))
    =
    \bar{Z}(0) \cdot \zeta^0 (I(x+\bar{S}(t)))
    + \int_{0}^{t}\alpha \cdot \zeta \left( I(x+\bar{S}(s,t)) \right) d s~~~~~~  
    \label{eq:bar-gamma}
\end{equation}
\item[(ii)] $\bar{\gamma}(0)=0$  then $\bar{\gamma}(t)=0$ for all $t\geq 0$
\end{itemize}

\begin{remark}~\label{rem:gamma}
\begin{itemize}
 \item[(i)] The function $\bar{S}(\cdot)$ in \eqref{eq:bar-gamma} is the one defined in
\eqref{eq:barS}. Since $\langle 1,\gamma^r(t) \rangle = e.Z^r(t)$ then  
$ \bar{S}(t) = \int_0^t \varphi (\langle 1,\bar{\gamma} (s) \rangle ds$.
\item[(ii)] The equation \eqref{eq:bar-gamma} can be interpreted as the fluid equation in the single class case, with law of
service $\alpha.\zeta/e.\alpha$,  initial state $\bar{\gamma}(0)=\bar{Z}(0) \cdot \zeta^0$ and external arrival rate $e.\alpha$. Consequently, the steady-state results of \cite{puha2004invariant} can be applied.  Under mild conditions, the critical fluid solution $\bar{\gamma}(t)$ converges to the invariant state $\frac{1}{\langle \chi, (\alpha \cdot \zeta)^e \rangle} \langle \chi, \bar{\gamma}(0) \rangle (\alpha \cdot \zeta)^e$.  By Lemma \ref{lemma:zeta-properties} (i) and (ii), this implies that the critical fluid solution $\bar{\gamma}(t)$ converges to the invariant state $e(I-P')(\mathcal{B} * \Delta^{\nu})\bar{W}(0)$.
\end{itemize}
\end{remark}
\begin{proposition}
\label{cv:workload}
Consider a sequence of MPS queues as defined in Section \ref{MPS-model}, satisfying primitive assumptions \eqref{eq:assum1}-\eqref{eq:assum10} and initial condition assumptions \eqref{eq:assum-init1}-\eqref{eq:assum-init4}. Denote by $\widehat{W}^r(t) = W^r(r^2t)/r$ the diffusion scaled of the workload process. Then $\widehat{W}^r(\cdot)$ converges in distribution to $W^{\ast}(\cdot)$ as $r\rightarrow\infty$, where $W^{\ast}(\cdot)$ is the reflected Brownian motion defined in Theorem \ref{thm:main-result}.
\end{proposition}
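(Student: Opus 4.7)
The plan is to realize $\widehat{W}^r$ as the one-dimensional Skorokhod reflection of an explicit netput process, and then to invoke a functional central limit theorem. Because $V^r_k(i)$ aggregates the total service a class-$k$ exogenous job will require across all its future feedback visits, internal class transitions do not alter the workload; they only shift mass within $\gamma^r$. Combined with the work-conservation of processor sharing (total work is depleted at unit rate whenever the server is busy), this gives $W^r(t) = N^r(t) + Y^r(t)$ with
\begin{equation*}
N^r(t) := W^r(0) + T^r(t) - t, \qquad T^r(t) := \sum_{k=1}^{K}\sum_{i=1}^{E^r_k(t)} V^r_k(i),
\end{equation*}
and $Y^r(t) := \int_0^t \mathbf{1}_{\{W^r(s)=0\}}\, ds$ continuous, nondecreasing, zero at $0$, and increasing only when $W^r=0$. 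Hence $W^r = \Psi(N^r)$ for the one-dimensional Skorokhod map $\Psi$, and since $\Psi$ commutes with diffusion scaling we have $\widehat W^r = \Psi(\widehat N^r)$. As $\Psi$ is Lipschitz continuous on $\mathbf{D}([0,\infty),\mathbb{R})$, it suffices to show $\widehat N^r \Rightarrow N^*$, where $N^*(t) = W^*(0) - \sigma t + \sqrt{\Gamma}\,B(t)$, and to conclude by the continuous mapping theorem.

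For the FCLT on $\widehat N^r$, setting $m^r_k := \langle\chi,\zeta^r_k\rangle \to m_k$ and using $\rho^r = \sum_k \alpha^r_k m^r_k$, I would split
\begin{equation*}
\widehat N^r(t) = \widehat W^r(0) + \frac{T^r(r^2 t) - \rho^r r^2 t}{r} + r(\rho^r - 1)t
\end{equation*}
and treat each term. First, $\widehat W^r(0) = \langle\chi,\bar\gamma^r(0)\rangle \Rightarrow \langle\chi,\bar\gamma(0)\rangle = e(M^0 + MP'Q)\bar Z(0) = W^*(0)$, by Lemma~\ref{lem:dist-V-k}\eqref{eq:zeta-6} together with $\langle\chi,\mathcal{B}\ast \mathcal{V}^0\rangle = Q(M^0+MP'Q)$ from Lemma~\ref{lem:conv-zeta-r}. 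Second, $r(\rho^r-1)t \to -\sigma t$ by \eqref{eq:assum10}. For the middle term, use the class-wise decomposition
\begin{equation*}
T^r(r^2 t) - \rho^r r^2 t = \sum_{k=1}^{K}\Bigl(\sum_{i=1}^{E^r_k(r^2 t)} (V^r_k(i) - m^r_k)\Bigr) + \sum_{k=1}^{K} m^r_k \bigl(E^r_k(r^2 t) - \alpha^r_k r^2 t\bigr).
\end{equation*}
The $(4+\theta)$-moment bound \eqref{eq:zeta-2} supplies Lindeberg's condition for the i.i.d.\ array $(V^r_k(i))$, yielding an FCLT for the partial sums; composing with the FWLLN $E^r_k(r^2 t)/r^2 \to \alpha_k t$ via the random time change theorem converts the first inner sum, divided by $r$, into a driftless Brownian motion of variance $\alpha_k(\langle\chi^2,\zeta_k\rangle - m_k^2)\,t$. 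The second inner term, divided by $r$, equals $m^r_k \widehat{E}^r_k(t)$, which by \eqref{eq:cons-assum1} converges to $m_k E^*_k(t)$ of variance $m_k^2 \Pi_{kk}t$. Mutual independence across classes and between services, inter-arrivals, and routing (inherited from the primitive data) lets these $2K$ Brownian limits be aggregated into a single real-valued Brownian motion.

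It remains to identify the resulting limit variance
\begin{equation*}
\Gamma_0 = \sum_{k=1}^{K}\Bigl[\alpha_k\bigl(\langle\chi^2,\zeta_k\rangle - m_k^2\bigr) + m_k^2 \Pi_{kk}\Bigr]
\end{equation*}
with $\Gamma$ in \eqref{eq:cov-RBM}. Using the closed forms $m_k = (eMQ)_k$ and $\langle\chi^2,\zeta_k\rangle = [e(M^{(2)}+2MQP'M)Q]_k$ from Lemma~\ref{lem:conv-zeta-r}, a direct expansion gives $\sum_k m_k^2\Pi_{kk} = eMQ\Pi Q'Me'$ and reduces the claim to the algebraic identity
\begin{equation*}
\sum_{k=1}^{K}\alpha_k\langle\chi^2,\zeta_k\rangle - \sum_{k=1}^{K}\alpha_k m_k^2 \;=\; e\Lambda\Sigma e' + eMQ\Bigl(\sum_{l=1}^{K}\lambda_l H^l\Bigr)Q'Me'.
\end{equation*}
This bookkeeping is the main obstacle: the formulation of $\Gamma_0$ via the effective distributions $\zeta_k$ hides the three independent primitive sources of randomness (service noise $\Sigma$, inter-arrival noise $\Pi$, and routing noise $H^l$) inside the convolutions $(BP')^{*n}$ defining $\mathcal{B}$, so unpacking the identification cleanly requires care; conceptually the $\sum_l\lambda_l H^l$ contribution arises by the law of total variance applied to the random walk of class visits that generates $V_k$. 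Once the identity is verified, the continuous mapping theorem applied to $\Psi$ delivers $\widehat W^r = \Psi(\widehat N^r) \Rightarrow \Psi(N^*) = W^*$.
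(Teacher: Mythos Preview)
Your proposal is correct and follows essentially the same route as the paper: represent the workload via the Skorokhod/nonidling map applied to the netput $W^r(0)+\sum_k\sum_{i\le E^r_k(t)}V^r_k(i)-t$, apply a triangular-array FCLT to the centered service sums together with the renewal FCLT for $\widehat E^r$ and a random time change, and then identify the limit variance with $\Gamma$ via the moment formulas of Lemma~\ref{lem:conv-zeta-r}. The paper carries out exactly this decomposition (their $L^r$ is your $N^r-W^r(0)$) and records the variance bookkeeping you flag as the main obstacle in equations \eqref{eq:var-L1}--\eqref{eq:var-L4}, the key algebraic step being $\Lambda-D_\alpha-P'\Lambda P=\sum_k\lambda_kH^k$.
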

\begin{proof}
 Let $\mathbb{V}^r_{k}(n)=\sum_{i=1}^nV^r_k(i)$ and $ L^r(t)=\sum_{k\in\mathcal{K}}\mathbb{V}^r_{k}(E_k^r(t))-t$. Denote by $\widehat{\mathbb{V}}^r_{k}(t)=(\mathbb{V}^r_{k}(\lfloor r^2 t \rfloor)-\langle\chi, \zeta_k \rangle\lfloor r^2 t \rfloor)/r $ the diffusion scaled process of $\mathbb{V}^r_{k}(t)$. The diffusion scaled process $\widehat{L}^r(t)=L^r(r^2t)/r$ satisfies,
\begin{equation*}
 \widehat{L}^r(t) =\sum_{k=1}^K\left(\widehat{\mathbb{V}}^r_{k}(\bar{E}^r_k(t))+(eM^rQ^r)_k\widehat{E}_k^r(t)\right)-(1-\rho^r)rt.
\end{equation*}
The diffusion scaled of the workload process $W^r(t)$ satisfies the nonidling discipline equation, 
\begin{equation}
\label{eq:nonidling}
 \widehat{W}^r(t)= \widehat{W}^r(0)+\widehat{L}^r(t)+ \sup_{0\leq s\leq t}{\{(\widehat{W}^r(0)+\widehat{L}^r(s))^-\}}.
\end{equation}
Conditions \eqref{eq:zeta-2}-\eqref{eq:zeta-5} imply, via functional central limit theorem for triangular arrays, the following weak convergence:
\begin{equation}
 \widehat{\mathbb{V}}^r_{k}(\cdot)\Rightarrow \mathbb{V}^{\ast}_{k}(\cdot)~~\mbox{as}~~r\longrightarrow \infty,
 \label{eq:cv-V-k}
\end{equation}
for each $k\in\mathcal{K}$, where $\mathbb{V}^{\ast}_k (\cdot)$ is a Brownian motion with drift $0$ and variance  $\langle\chi^2, \zeta_k \rangle-(\langle\chi, \zeta_k \rangle)^2$. 
By the weak convergences \eqref{eq:cons-assum1}, \eqref{eq:cons-assum2} and \eqref{eq:cv-V-k}, and the fact that the processes $E^{\ast}_k(\cdot)$ and $\mathbb{V}^{\ast}_k(\cdot)$ are independant, and the process $\alpha_k (\cdot)$ is deterministic, then 
\begin{align*}
\left( \hat{E}^r_k(\cdot), \hat{\mathbb{V}}^r_k(\cdot), \bar{E}^r_k(\cdot)  \right) \Rightarrow \left( E^{\ast}_k(\cdot), \mathbb{V}^{\ast}_k(\cdot), \alpha_k (\cdot) \right) ~~~\text{for each } k \in \mathcal{K}.
\end{align*}
It follows from the random time change theorem (cf. \cite{billingsley} Section 17) and the continuous mapping theorem (cf. \cite{billingsley}, Theorem 5.1) that the weak convergence $\widehat{L}^r(t)\Rightarrow L^{\ast}(t)$ holds as $r\rightarrow \infty$, where $L^{\ast}$ is a Brownian motion with drift $-\sigma$ and variance
\begin{equation}
\label{eq:variance-L}
\sum_{k=1}^K\left(\langle\chi^2, \zeta_k \rangle-(\langle\chi, \zeta_k \rangle)^2\right)\alpha_k+(\langle\chi, \zeta_k \rangle)^2a_k\alpha_k^3,
\end{equation}
which is equal to the variance $\Gamma$ given by \eqref{eq:cov-RBM}. In fact, from conditions \eqref{eq:b-ast-nu1}, \eqref{eq:b-ast-nu2} and by using the fact that $\Lambda e^{\prime}= Q \alpha$ and $I+QP'=Q$, and after some calculations, it follows that
\begin{gather}
 \sum_{k=1}^K\langle\chi^2, \zeta_k \rangle\alpha_k = e\Sigma\Lambda e^{\prime}+eMQ(\Lambda-P^{\prime}\Lambda P)Q^{\prime} M e^{\prime}
 \label{eq:var-L1}\\
 \sum_{k=1}^K(\langle\chi, \zeta_k \rangle)^2\alpha_k= eMQD_{\alpha}Q^{\prime}Me^{\prime}
 \label{eq:var-L2}\\
 \sum_{k=1}^K(\langle\chi, \zeta_k \rangle)^2a_k\alpha_k^3=eMQ \Pi Q^{\prime}Me^{\prime},
 \label{eq:var-L3}\\
 \Lambda-D_{\alpha}-P'\Lambda P=\sum_{k=1}^K\lambda_k H^k
 \label{eq:var-L4}
\end{gather}
where $D_{\alpha}=diag\{\alpha_k\}$ and the matrices $\Pi,~\Sigma$ and $(H_k,k\in\mathcal{K})$ are defined by \eqref{eq:cov-arrivee}, \eqref{eq:cov-service} and \eqref{eq:cov-routing}. 
 Next, it follows from \eqref{eq:zeta-6} that $\widehat{W}^r(0)=\langle\chi,\bar{\gamma}^r(0)\rangle\Rightarrow\langle\chi,\bar{\gamma}(0)\rangle$, so the continuous mapping theorem  applied to \eqref{eq:nonidling} implies the result.
\end{proof}

\par Given that the measure-valued process $\gamma^r$ defined by \eqref{eq:gamma-r} evolves similarly to the corresponding process in the single-class processor sharing model, and given the conditions of Lemma \ref{lem:dist-V-k} (which are consistent with the assumptions in \cite{gromoll2004diffusion}), along with the convergence of the diffusion-scaled workload process established in the previous proposition, we have the following proposition:

 \begin{proposition}
  \label{prop:gamma-global}
Consider a sequence of MPS queues as defined in Section \ref{MPS-model}, satisfying primitive assumptions \eqref{eq:assum1}-\eqref{eq:assum10} and initial condition assumptions \eqref{eq:assum-init1}-\eqref{eq:assum-init4}. Denote by $\widehat{\gamma}^r(t) = \gamma^r(r^2t)/r$ the diffusion scaled of $\gamma^r$. Then 
\begin{align*}
\widehat{\gamma}^r(\cdot) \Rightarrow e(I-P') (\mathcal{B} \ast \Delta^{\nu}) W^{\ast}(\cdot) ~~~\text{ as } ~~ r\rightarrow\infty,
\end{align*}
 where $W^{\ast}(\cdot)$ is the reflected Brownian motion defined in Theorem \ref{thm:main-result}.
\end{proposition}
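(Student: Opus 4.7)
The plan is to reduce the problem to the single-class processor-sharing diffusion approximation of \cite{gromoll2004diffusion} and then identify the limiting lifting map. The measure-valued process $\gamma^r(\cdot)$ defined in \eqref{eq:gamma-r} has exactly the functional form of a single-class PS state descriptor: each exogenous arrival of class $k$ contributes an atom at its \emph{total} residual service requirement $V_k^r(i)$, the service rate is driven by $S^r(\cdot)$, which by \eqref{eq:mass-total} is governed only by the total mass $\langle 1,\gamma^r(\cdot)\rangle$, and the workload is $\langle\chi,\gamma^r(\cdot)\rangle$ by \eqref{eq:workload}. Thus $\gamma^r$ is the state descriptor of a virtual $GI/GI/1/PS$ queue whose arrivals are the superposition of the $K$ exogenous renewal streams $E_k^r$ and whose service distribution, conditional on the class of the arriving job, is $\zeta_k^r$ (respectively $\zeta_k^{0,r}$ for initial jobs).

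First I would verify that this virtual single-class system satisfies the hypotheses of \cite{gromoll2004diffusion}. The heavy-traffic condition, the $(4+\theta)$-moment bounds and the weak convergence of the service-time law are provided by Lemma~\ref{lem:dist-V-k}, while the assumption on the initial state is the joint convergence \eqref{eq:zeta-6}. The criticality of the virtual system follows from $\sum_k \alpha_k \langle \chi,\zeta_k\rangle = e M Q \alpha = eM\lambda = \rho = 1$ (using \eqref{eq:b-ast-nu1}), and the heavy-traffic parameter is the same $\sigma$ as in \eqref{eq:assum10}. The convergence of the diffusion-scaled workload $\widehat{W}^r \Rightarrow W^\ast$ is already established in Proposition~\ref{cv:workload}, which is precisely the workload input needed by \cite{gromoll2004diffusion} to drive the state-space-collapse step.

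Next I would invoke the main theorem of \cite{gromoll2004diffusion} to obtain $\widehat{\gamma}^r(\cdot) \Rightarrow \Delta^{\eta} W^{\ast}(\cdot)$, where $\Delta^{\eta}$ is the single-class lifting map associated with the effective service law $\eta:=(e\alpha)^{-1}\sum_k\alpha_k\zeta_k$ and arrival rate $e\alpha$. It then remains to identify
\[
\Delta^{\eta} w \;=\; e(I-P')\bigl(\mathcal{B}\ast \Delta^{\nu}\bigr)\, w \qquad \text{for every } w \ge 0.
\]
For this I would compute both sides explicitly. By definition of the single-class lifting (see \cite{gromoll2004diffusion}) and the formula for the excess distribution, $\Delta^{\eta} w = c\, w\, \eta^e$ where the constant $c$ is read off from the workload-normalisation $\langle \chi,\eta^e\rangle$ and $\eta$'s mean. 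Using $\zeta_k=(e(I-P')(\mathcal{B}\ast\mathcal{V}))_k$ together with \eqref{eq:b-ast-nu1}–\eqref{eq:b-ast-nu2} from Lemma~\ref{lem:conv-zeta-r}, one computes $\langle\chi,\eta\rangle=eMQ\alpha \cdot (e\alpha)^{-1}$ and $\langle\chi^2,\eta\rangle = e(M^{(2)}+2MQP'M)Q\alpha\cdot(e\alpha)^{-1}$, so the normalising constant reproduces exactly the denominator $e(\tfrac12 M^{(2)}+MP'QM)\lambda$ appearing in Definition~\ref{def:lift-map}. A direct manipulation of the convolution $(\mathcal{B}\ast \mathcal{V})$ using $\mathcal{B}=I+BP'\ast\mathcal{B}$ together with the identity $\Lambda=Q\alpha$ then shows that the measure $e(I-P')(\mathcal{B}\ast\Delta^{\nu})$ evaluated at $w$ agrees, componentwise, with $c w\,\eta^e$; equivalently, the formula is consistent with the invariant manifold description already recorded in Remark~\ref{rem:gamma}(ii).

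I expect the main obstacle to be the matching of the lifting maps in the last step, as it requires combining several algebraic identities for $Q$, $\mathcal{B}$, $\Lambda$ and the mean/second-moment vectors of $\nu$ and $\zeta$. The single-class diffusion approximation itself is applied as a black box once Lemma~\ref{lem:dist-V-k} and Proposition~\ref{cv:workload} have been invoked; the algebra is routine but bookkeeping-heavy, and the cleanest way to carry it out is via Laplace transforms, since $\widehat{\mathcal{B}\ast B}=(I-\widehat{B}P')^{-1}\widehat{B}$ makes the interchange of $\mathcal{B}$ with the factor $(I-P')$ transparent.
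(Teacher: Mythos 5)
Your plan matches the paper's own (admittedly terse) argument for this proposition: treat $\gamma^r$ as the state descriptor of a virtual single-class PS queue, verify the moment/convergence hypotheses of \cite{gromoll2004diffusion} via Lemma~\ref{lem:dist-V-k}, feed in the workload convergence of Proposition~\ref{cv:workload}, and identify the resulting single-class lifting map with $e(I-P')(\mathcal{B}\ast\Delta^{\nu})$.

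Two small observations. First, the lifting-map identification you propose to rederive via Laplace transforms is already available in the paper: Lemma~\ref{lemma:zeta-properties}(ii) gives $(\alpha\cdot\zeta)^e = e(I-P')(\mathcal{B}\ast\mathcal{V}^e)M\lambda$, equation~\eqref{wss3} shows the normalising mean is exactly $e(\tfrac{1}{2}M^{(2)}+MP'QM)\lambda$, and Remark~\ref{rem:gamma}(ii) records the resulting invariant-state identity; citing these is cleaner than redoing the algebra. Second, a caveat worth making explicit (which the paper also glosses over): your ``virtual'' system is not literally $GI/GI/1$ --- its arrival stream is the superposition of $K$ independent renewal processes and each arrival's total service demand is drawn from the class-dependent law $\zeta_k^r$, so the service sequence is not i.i.d.\ across arrivals. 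Invoking the Gromoll~2004 theorem ``as a black box'' therefore requires the observation that its proof uses only the FCLT for the workload input (Proposition~\ref{cv:workload}), the fluid limit (from \cite{tahar2012fluid}), and the moment/Glivenko--Cantelli estimates that Lemma~\ref{lem:dist-V-k} supplies class by class, all of which carry over. This is the same level of rigor the paper adopts, so it is not a gap relative to the paper, but ``applied as a black box'' is slightly stronger than what the cited theorem literally covers.
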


\subsection{Descriptor to the multiclass queue}
\label{subsec:mult-queue}  

The primary aim of this section is to demonstrate Theorem \ref{th:cv-visits}, which establishes the convergence of the diffusion-scaled process $\hat{\mathcal{Q}}^r(\cdot)$. 
\vspace{0.2cm}
\par For each $l,k\in \mathcal{K}$ and $i\geq 1$, let $N^{r,l}_{k}(i)$ denote the total number of visits to class $k$ by the $i^{th}$ job entering the system as a job of class $l$.
 For each $n=1,\ldots,N^{r,l}_{k}(i)$, let
$V^r_{lk}(i,n)$ represent the sum of service times required by this job from its arrival until its $n^{th}$ visit to class $k$
(included). 
Specifically, $V^{0,r}_{lk}(i,n)$ denotes the total service time required by the $i^{th}$ initial job of class $l$ until its $n^{th}$ visit to class $k$. Let $i\geq1$, $n=1,\ldots,N^{r,l}_k$, and denote by $U^r_{l}(i)$ the exogenous arrival epoch of the $i^{th}$ job of class $l$. 
\par The  properties of the distributions of $(N^{r,l}_{k}(i),i\geq 1)$, $(V^r_{lk}(i,n);i,n\geq 1)$, and $(V^{0,r}_{lk}(i,n);i,n\geq 1)$ are provided in Lemmas D.1, D.2, and D.3 \cite{tahar2012fluid}. 
In the following lemma, we present additional properties that are useful in our analysis.

\begin{lemma}~
 \label{lem:pro-N-V-lk}
\begin{itemize}
 \item[1.] For each $l\in\mathcal{K}$, denote by $N^l(i) = (N^l_{1}(i) ,N^l_{2}(i), \ldots, N^l_{K}(i) )$. The sequence $\{ N^l(i), i \geq 1 \}$ is i.i.d with mean $\mathbb{E}(N^l)=(Q_{kl},k\in\mathcal{K})$ and covariance matrix $B^l$  with entries $B_{k\ell}^l=Q_{k\ell} Q_{\ell l} + (Q P^{\prime})_{\ell k} Q_{kl} - Q_{kl} Q_{\ell l}$ satisfies
\begin{equation}
\label{eq:cov-B-H}
\sum_{l=1}^K \alpha_lB^l = Q \left(\sum_{l=1}^K \lambda_l H^l \right) Q',
\end{equation}
where $\{ H^l, l \in \mathcal{K} \}$ are the matrices defined in \eqref{eq:cov-routing}.
\item[2.] For each $n\geq 1$ and $k,l\in\mathcal{K}$, the sequence $(V_{lk}(i,n),i\geq 1)$ is i.i.d such that for all function $g:\mathbb{R}^+\to\mathbb{R}^+$,
\begin{equation*}
\mathbb{E} \left( \sum_{n=1}^{N^l_k(i)} g(V_{lk}(i,n)) \right) =\langle g,\mathcal{B}_{kl}\ast\nu_l\rangle.
\end{equation*}
\end{itemize}
\end{lemma}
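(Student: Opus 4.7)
The plan is to exploit the Markov chain interpretation of a single job's trajectory. For part 1, I fix $l \in \mathcal K$ and view the $i$-th class-$l$ job's sequence of visited classes as an independent copy of a Markov chain $(X_n)_{n\geq 0}$ on $\mathcal K \cup \{0\}$ with $X_0 = l$, transition matrix $P$, and absorbing state $0$. The i.i.d.\ property of $\{N^l(i)\}_i$ follows from the independence across jobs of the routing primitives $\varphi^{\cdot,r}$. Writing $N^l_k = \sum_{n \geq 0} \mathbf 1_{\{X_n = k\}}$, the mean $\mathbb E(N^l_k) = \sum_n (P^n)_{lk} = ((I-P)^{-1})_{lk} = Q_{kl}$ is the classical fundamental-matrix identity. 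For the covariance I compute $\mathbb E(N^l_k N^l_\ell) = \sum_{m,n\geq 0} \mathbb P_l(X_m = k, X_n = \ell)$ by splitting into $m \leq n$ and $m > n$ and applying the Markov property at the smaller index; this gives $Q_{kl}Q_{\ell k} + Q_{\ell l}(Q_{k\ell} - \delta_{\ell k})$. Using $Q_{\ell k} - \delta_{\ell k} = (QP')_{\ell k}$ (equivalently $Q - I = QP'$) and subtracting $Q_{kl}Q_{\ell l}$ yields exactly the announced formula for $B^l_{k\ell}$.

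The identity \eqref{eq:cov-B-H} is then a purely algebraic consequence of the traffic equation. From $H^l_{ij} = p_{li}\delta_{ij} - p_{li}p_{lj}$ one obtains $\sum_l \lambda_l H^l = \operatorname{diag}(P'\lambda) - P'\operatorname{diag}(\lambda)P$, and flow conservation $(I-P')\lambda = \alpha$ replaces $P'\lambda$ by $\lambda - \alpha$. Sandwiching between $Q$ and $Q'$ and repeatedly applying $QP' = P'Q = Q - I$ to absorb the $\operatorname{diag}(\lambda)$ pieces reduces the right-hand side to $Q_{k\ell}\lambda_\ell + (QP')_{\ell k}\lambda_k - (Q\operatorname{diag}(\alpha)Q')_{k\ell}$, which matches $\sum_l \alpha_l B^l_{k\ell}$ after using $\lambda = Q\alpha$ to collapse the first and third terms of $B^l_{k\ell}$.

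For part 2, conditional on the same Markov chain $(X_n)$, the successive service times $\hat v_0, \hat v_1, \ldots$ are independent with $\hat v_n \sim \nu_{X_n}$, and $V_{lk}(i,j) = \hat v_0 + \cdots + \hat v_{\tau_j}$ where $\tau_j$ is the epoch of the $j$-th visit to class $k$. Decomposing
\begin{align*}
\sum_{n=1}^{N^l_k(i)} g(V_{lk}(i,n)) = \sum_{m \geq 0} g(\hat v_0 + \cdots + \hat v_m)\, \mathbf 1_{\{X_m = k\}},
\end{align*}
taking expectation and conditioning on each path $l = x_0, x_1, \ldots, x_{m-1}, k$ gives
\begin{align*}
\mathbb E\Bigl[\sum_{n=1}^{N^l_k(i)} g(V_{lk}(i,n))\Bigr] = \sum_{m \geq 0}\sum_{x_1,\ldots,x_{m-1}} p_{lx_1}\cdots p_{x_{m-1}k}\, \bigl\langle g, \nu_l * \nu_{x_1} * \cdots * \nu_{x_{m-1}} * \nu_k\bigr\rangle.
\end{align*}
For each $m$ the inner double sum equals $\langle g, ((BP')^{*m})_{kl} * \nu_l\rangle$, and summing the geometric series produces $\langle g, \mathcal B_{kl} * \nu_l\rangle$ by the definition \eqref{eq:mathcal-B}. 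The i.i.d.\ property of $(V_{lk}(i,n))_i$ at fixed $n$ again follows from independence across jobs. The main obstacle is the algebraic reduction of \eqref{eq:cov-B-H}: although every step uses only $\lambda = Q\alpha$ and $QP' = Q - I$, the non-symmetric matrices and mixed $\operatorname{diag}(\alpha)$/$\operatorname{diag}(\lambda)$ contributions make it easy to miscount indices, and one must be careful to keep the formula symmetric under $(k,\ell) \leftrightarrow (\ell,k)$ only up to the non-symmetric term $(QP')_{\ell k}Q_{kl}$.
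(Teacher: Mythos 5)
Your proof is correct and follows essentially the same route as the paper: the Markov-chain interpretation of $N^l_k$ for the first two moments, the same use of $QP'=Q-I$ and $\lambda = Q\alpha$ to reduce $\sum_l \alpha_l B^l$ to $Q(\Lambda-D_\alpha-P'\Lambda P)Q'$, which is matched to $\sum_l\lambda_l H^l$ exactly as in the paper's \eqref{eq:var-L4}. The only deviation is that you supply a direct path-conditioning computation for part 2, whereas the paper simply cites Lemma D.3 of \cite{tahar2012fluid}; your computation agrees with that reference.
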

\begin{proof}~
\begin{itemize}
 \item[1.]    
 Let $(X_n)_{n \geq 0}$ be a homogenous Markov chain with state space $\mathcal{K}$ and transition matrix $P$. For each $l,k \in \mathcal{K}$,  $N^l_{k}$ is the number of visits to state $k$ from state $l$ of the markov chain $X_n$. Hence, $N^l_{k}$ can be expressed as
 $$ N^l_{k} = \sum_{ n \geq 0}  1_{\lbrace X_n = k \lvert X_0=l\rbrace}.   $$
 The sequence $(N^l(i),i\geq 1)$ is i.i.d. because routing events of different jobs are independent.
Given that the spectral radius of $P$ is less than 1, one can easily obtain
$\mathbb{E}(N^l_k) = Q_{kl}$. 
Furthermore, by performing a series of calculations using some known properties of Markov chains, we derive
$$ \mathbb{E}(N^l_k,N^l_{\ell}) =Q_{k\ell} Q_{\ell l} + (Q P^{\prime})_{\ell k} Q_{kl}.  $$
Therefore,
 $$cov(N^l_k,N^l_{\ell})=Q_{k\ell} Q_{\ell l} + (Q P^{\prime})_{\ell k} Q_{kl} - Q_{kl} Q_{\ell l}=B^l_{k \ell}.$$ 
By definition of the matrix $B^l$, and after performing some matrix calculations, we obtain 
\begin{align*}
 \sum_{l=1}^K \alpha_lB^l = \Lambda + P^{\prime}Q \Lambda + \Lambda Q^{\prime}P - Q D_{\alpha} Q^{\prime}
 = Q\left(\Lambda - D_{\alpha} - P^{\prime} \Lambda P\right) Q^{\prime}.
\end{align*}
This, in conjuction with \eqref{eq:var-L4} implies \eqref{eq:cov-B-H}.
\item[2.] See Lemma D.3 \cite{tahar2012fluid}.
\end{itemize}
\end{proof}
Following the approach in \cite[Section 5.2]{tahar2012fluid}, we introduce a family of state descriptors $\gamma^r_{lk}(t)$ for each $l,~k \in \mathcal{K}$ and $t\geq 0$, defined as: 
\begin{equation}
  \gamma^r_{lk}(t) =
  \sum_{i=1}^{Z^r_l(0)}\sum_{n=1}^{N^{l,r}_{k}(i)}
  \delta^+_{(V^{0,r}_{lk}(i,n)-S^r(t) )}
  +
  \sum_{i=1}^{E^r_l(t)}\sum_{n=1}^{N^{l,r}_{k}(i)}
  \delta^+_{(V^r_{lk}(i,n)-S^r(U^r_l(i),t) )}.
  \label{eq:gamma^r_k0k}
  \end{equation}
The meaning of this equation can be explained as follows.  
Consider the \( i \)th job arriving in class \( l \) at time \( U^r_l(i) \). At a given time \( t \), this job has completed their \( n \)th visit to class \( k \) (which results in a departure from class \( k \)) if and only if the condition  
$
V^r_{lk}(i,n) \leq S^r(U^r_l(i),t)
$  
is met.  
Furthermore, at time \( t \), the expression  
$
(V^r_{lk}(i,n) - S^r(U^r_l(i),t))^+
$  
represents the remaining service required before the job finishes their \( n \)th visit to class \( k \). This can be interpreted as a form of residual service time, but it is specifically measured relative to this \( n \)th departure from class \( k \). Similar reasoning applies to the initial jobs in the system.
   Let $\mathcal{Q}^r(t)=(\mathcal{Q}^r_1(t),\ldots,\mathcal{Q}^r_K(t))$, where
 $$\mathcal{Q}^r_k(t) = \sum_{l=1}^K\gamma^r_{lk}(t) ~~ \text{ For  each } ~  k\in\mathcal{K}.$$
  We will be interested below in the diffusion approximation of the descriptor 
  $\mathcal{Q}^r$ taking values in $\mathbf{D}([0,\infty),\mathcal{M}^K)$ and satisfies
  \begin{equation}
  \mathcal{Q}^r(t)(I(x))=\mathcal{Q}^r(0)(I(x+S^r(t)))+\sum_{l=1}^K\sum_{i=1}^{E^r_l(t)}\vartheta^{r,l}(i)(I(x+S^r(U^r_l(i),t))),
  \label{eq:mathcal-Q}
\end{equation}
where for each $l\in\mathcal{K}$, $r>0$ and $i=1,2,\ldots$ the random element  $\vartheta^{r,l} (i)$ takes values in $\mathcal{M}^K$ and defined for each $k\in\mathcal{K}$ by
\begin{equation}
\label{es:mathca-X}
\vartheta_{k}^{r,l} (i):= \sum_{n=1}^{N_{k}^{r,l} (i)}  \delta_{V^r_{lk}(i,n)}^+ 
\end{equation} 
and $\mathcal{Q}^r(0)$ is the initial state of \eqref{eq:mathcal-Q}, which is given in function of the initial data of the MPS queue by
\begin{equation}
  \mathcal{Q}^r_k(0)=\sum_{l=1}^K\sum_{i=1}^{Z^r_l(0)}\sum_{n=1}^{N^{r,l}_{k}(i)}
  \delta^+_{V^{0,r}_{lk}(i,n)}.
  \label{eq:mathcal-Q-0}
\end{equation}
Note that $\vartheta_{lk}^r (i)(\{0\})=0$ and $\mathcal{Q}^r(t) (\{0\})=0 $, since $V^r_{lk}(i,m) >0$ and the fact that no jobs will be in the system if the residual job service $\left( V^r_{lk}(i,n)-S^r(U^r_l(i),t) \right)$ equals zero. This implies that
\begin{equation}
\langle 1, \vartheta_{lk}^r (i) \rangle = \langle 1_{(0,\infty)}, \vartheta_{lk}^r (i) \rangle ~~ \text{ and } ~~ \langle 1, \mathcal{Q}^r(t) \rangle = \langle 1_{(0,\infty)}, \mathcal{Q}^r(t) \rangle.
\label{1glob}
\end{equation}

 Denote by $\bar{\mathcal{Q}}^r (t)=\mathcal{Q}^r(rt) / r$. Then, by Proposition 5.3 \cite{tahar2012fluid}, the sequence of the fluid scaled state descriptor $\bar{\mathcal{Q}}^r (\cdot)$ converges in distribution to $\bar{\mathcal{Q}}(\cdot)$ as $r\rightarrow\infty$, where $\bar{\mathcal{Q}}$ is the fluid solution of the following fluid model:
\begin{itemize}
 \item[(i)] $\bar{\mathcal{Q}}:[0,\infty)\to\mathcal{M}^K$ is continuous function such that for all $t\geq 0$ and $x\geq 0$,
 \begin{equation}
 \label{fluid-mathcal-Q}
 \bar{\mathcal{Q}}(t)(I(x))
    =
    \mathcal{B}\ast\mathcal{V}^0 (I(x+\bar{S}(t))) \bar{Z}(0)+\int_{0}^{t} 
\mathcal{B}\ast \mathcal{V} (I(x+\bar{S}(s,t)))ds~\alpha.
\end{equation}
\item[(ii)] $\bar{\mathcal{Q}}(0)=0$  then $\bar{\mathcal{Q}}(t)=0$ for all $t\geq 0$.
\end{itemize}
Moreover, the fluid solution $\bar{\mathcal{Q}}(t)$ satisfies:
\begin{align}
\langle 1,\bar{\mathcal{Q}}(t) \rangle = Q \bar{Z}(t). \label{glob_mathcal_Q}
\end{align}

\par The following theorem establishes the state space collapse for the measure-valued process $\mathcal{Q}^r$, which is essential for proving the convergence of the diffusion-scaled process in Theorem \ref{th:cv-visits}.

\begin{theorem} 
\label{ssc2} 
 Consider a sequence of MPS queues as defined in Section \ref{MPS-model}, satisfying primitive assumptions \eqref{eq:assum1}-\eqref{eq:assum10} and initial condition assumptions \eqref{eq:assum-init1}-\eqref{eq:assum-init4}. For all $T>0$, 
  \begin{equation*}
  \sup_{ t \in [0,T] }\mathbf{d} \left( \widehat{\mathcal{Q}}^r(t) ,  \mathcal{B}\ast\Delta^{\nu} \widehat{W}^r(t) \right) \Rightarrow 0 ~~ \text{as}~~r\rightarrow\infty.
  \end{equation*}
  \end{theorem}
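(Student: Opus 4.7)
The plan is to adopt the shifted fluid-scaling framework of Bramson and Williams exactly as outlined in the introduction for $\widehat{\mu}^r(\cdot)$. Fix $L > 1$ large, set $\bar{\mathcal{Q}}^{r,m}(\cdot) := \bar{\mathcal{Q}}^r(m + \cdot)$, and for each $t \in [0,T]$ with $rt \geq L$ choose $m \leq \lfloor rT \rfloor$ and $s \in [0,L]$ with $s$ close to $L$ and $m + s = rt$, so that $\widehat{\mathcal{Q}}^r(t) = \bar{\mathcal{Q}}^{r,m}(s)$. The argument then reduces to three independent ingredients that mirror the program for the state space collapse of $\mu^r$: (i) precompactness of the family $\{\bar{\mathcal{Q}}^{r,m}(\cdot) : r > 0,\ 0 \leq m \leq \lfloor rT \rfloor\}$ in $\mathbf{D}([0,L], \mathcal{M}^K)$; (ii) uniform approximation, on a set of probability tending to one, of each shifted sample path by a fluid solution of \eqref{fluid-mathcal-Q} starting from $\bar{\mathcal{Q}}^{r,m}(0)$; and (iii) the uniform convergence of fluid solutions $\bar{\mathcal{Q}}(s)$ to the invariant state $\mathcal{B}\ast\Delta^{\nu}\bar{W}(0)$ as $s\to\infty$, which is Proposition \ref{asmp}.

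For tightness (i), I would verify a compact-containment condition in $\mathcal{M}^K$ together with a modulus-of-continuity bound. The total mass $\langle 1, \bar{\mathcal{Q}}^{r,m}(s)\rangle$ is tied to the global queue length via the fluid analogue of \eqref{glob_mathcal_Q}, and the global queue length is itself controlled by the diffusion convergence $\widehat{\gamma}^r \Rightarrow e(I - P')(\mathcal{B} \ast \Delta^{\nu}) W^*$ of Proposition \ref{prop:gamma-global} together with $\langle 1, \gamma^r \rangle = e \cdot Z^r$. Tails at infinity are handled via the uniform bounds $\limsup_r \langle \chi^{4+\theta}, \mathcal{B}^r \ast \mathcal{V}^r \rangle < \infty$ and the corresponding bound for $\mathcal{V}^{0,r}$ furnished by Lemma \ref{lem:conv-zeta-r}, with the mean measures of the summands identified through Lemma \ref{lem:pro-N-V-lk}(2). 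The modulus of continuity is driven by the cumulative service $\bar{S}^{r,m}(\cdot)$, whose derivative $\varphi(\langle 1, e \cdot \bar{\gamma}^{r,m}(\cdot)\rangle)$ remains bounded on events where the total queue size is bounded away from zero, using again the convergence of $\widehat{\gamma}^r$.

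For the fluid approximation (ii), I would apply the shifted fluid-scaled version of the dynamic equation \eqref{eq:mathcal-Q} and show that $\bar{\mathcal{Q}}^{r,m}(\cdot)$ satisfies \eqref{fluid-mathcal-Q} up to an error that vanishes in probability uniformly in $m \leq \lfloor rT \rfloor$. The main ingredients are a FLLN for the shifted external arrivals $\bar{E}^r(m + \cdot) - \bar{E}^r(m)$; a functional Glivenko--Cantelli bound for the empirical measures $r^{-1}\sum_i \vartheta^{r,l}(i)$, where Lemma \ref{lem:pro-N-V-lk}(2) identifies the per-customer mean measure as $\mathcal{B}^r_{\cdot,l} \ast \nu^r_l$; and continuity estimates for $\bar{S}^{r,m}$ deduced from the previous step. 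A union bound over the $O(r)$ choices of $m$, combined with exponential concentration obtained from the $(4+\theta)$-th moment bounds, upgrades pointwise FLLN statements to a uniform-in-$m$ statement on a high-probability event. Uniqueness of the fluid solution \eqref{fluid-mathcal-Q} with prescribed initial condition then yields closeness of $\bar{\mathcal{Q}}^{r,m}(\cdot)$ to $\bar{\mathcal{Q}}(\cdot)$ in the metric $\mathbf{d}$ uniformly on $[0,L]$.

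Combining (i)--(iii), for $L$ large and on a high-probability event,
\[
\bar{\mathcal{Q}}^{r,m}(s) \approx \mathcal{B} \ast \Delta^{\nu}\,\bar{W}^{r,m}(0) = \mathcal{B} \ast \Delta^{\nu}\,\widehat{W}^r(m/r),
\]
where the first approximation uses (ii), (iii), and conservation of the fluid workload. Since $s \leq L$ forces $|t - m/r| = s/r \leq L/r \to 0$ and $\widehat{W}^r \Rightarrow W^*$ with $W^*$ continuous (Proposition \ref{cv:workload}), $C$-tightness gives $\widehat{W}^r(m/r) - \widehat{W}^r(t) \to 0$ in probability uniformly in $t$, and continuity of the map $w \mapsto \mathcal{B} \ast \Delta^{\nu} w$ completes the identification of the limit. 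I expect step (ii) to be the main obstacle: the random measures $\vartheta^{r,l}(i)$ carry two layers of randomness (the random number of visits $N^{r,l}_k(i)$ and the cumulative service sums $V^r_{lk}(i,n)$), so producing a Glivenko--Cantelli bound for such measure-valued triangular arrays that is uniform over the $O(r)$ shifts $m$ requires careful exponential concentration estimates exploiting the $(4+\theta)$-th moment conditions of Lemmas \ref{lem:conv-zeta-r} and \ref{lem:pro-N-V-lk}.
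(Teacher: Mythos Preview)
Your three-ingredient plan (precompactness of shifted fluid-scaled paths, uniform approximation by fluid solutions of \eqref{fluid-mathcal-Q}, and Proposition \ref{asmp}) is exactly the paper's approach, and your identification of the Glivenko--Cantelli estimate for the measure-valued summands $\vartheta^{r,l}(i)$ as the technical crux is accurate; the paper handles this by decomposing each $\vartheta^{r,l}_k(i)$ as $\sum_{n\geq 1} \delta^+_{\widetilde V^r_{lk}(i,n)}$ with $\widetilde V^r_{lk}(i,n)=V^r_{lk}(i,n)1_{\{N^{r,l}_k(i)\geq n\}}$ and applying a scalar Glivenko--Cantelli bound visit-by-visit, with a geometric union bound over $n$.

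Two technical points in your outline do not match the paper and would cause trouble as written. First, your oscillation bound relies on the derivative of $\bar S^{r,m}$ being bounded, i.e.\ on the total queue $e\cdot\bar Z^{r,m}$ staying bounded away from zero, but nothing in the assumptions prevents the queue from emptying on $[0,L]$. The paper does not control $\bar S^{r,m}$ from above; instead it proves an \emph{asymptotic regularity} lemma showing that $\bar{\mathcal{Q}}^{r,m}(t)([x,x+\kappa])$ is uniformly small for small $\kappa$, and then runs the oscillation argument in two cases according to whether the queue empties on $[s,t]$. Second, your final combination covers only $t$ with $rt \geq L$ (equivalently $s$ near $L$), but the interval $t\in[0,L/r]$ must be handled separately: there the fluid solution has not yet had time to relax to the invariant state, and the paper closes this gap by invoking assumption \eqref{eq:assum-init4}, which forces $\bar{\mathcal{Q}}^{r,0}(\cdot)$ to start on the invariant manifold so that the fluid solution is identically $\mathcal{B}\ast\Delta^{\nu}\widetilde W(0)$ on $[0,L]$.
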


\vspace{0.2cm}

The proof of Theorem \ref{ssc2} is lengthy and highly technical. In fact, the entire section \ref{Sec5Q} is dedicated to its proof, with the final result presented at the end, following the establishment of all necessary lemmas.


\vspace{0.2cm}
 
We now state this section's main theorem.

\begin{theorem}
  \label{th:cv-visits}
Consider a sequence of MPS queues as defined in Section \ref{MPS-model}, satisfying primitive assumptions \eqref{eq:assum1}-\eqref{eq:assum10} and initial condition assumptions \eqref{eq:assum-init1}-\eqref{eq:assum-init4}. Denote by $\widehat{\mathcal{Q}}^{r}(t)= \mathcal{Q}^r(r^2t)/r$ the diffusion scaled version of $\mathcal{Q}^r(t)$. Then 
\begin{align*}
\widehat{\mathcal{Q}}^r(\cdot) \Rightarrow \mathcal{B}\ast\Delta^{\nu} W^{\ast}(\cdot) ~~~~ \text{ as } ~~ r\rightarrow\infty.
\end{align*}
  \end{theorem}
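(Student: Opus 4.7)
The plan is to derive Theorem \ref{th:cv-visits} by combining the state space collapse of Theorem \ref{ssc2} with the workload convergence of Proposition \ref{cv:workload}, via a linearity-of-lifting plus converging-together argument. The key observation is that $\Delta^{\nu}$, and hence $w\mapsto\mathcal{B}\ast\Delta^{\nu}w$, is linear in the scalar $w$, so the whole argument reduces to a continuous-mapping step applied to the already established convergence $\widehat{W}^r\Rightarrow W^{\ast}$.

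First, I would unpack the lifting map. From Definition \ref{def:lift-map},
\begin{equation*}
\Delta^{\nu}w=c\, w\cdot M\Lambda\nu^e,\qquad c=\bigl(e(\tfrac12 M^{(2)}+MP'QM)\lambda\bigr)^{-1}>0,
\end{equation*}
so by linearity of the matrix convolution in its measure argument, $\mathcal{B}\ast\Delta^{\nu}w=c\, w\cdot(\mathcal{B}\ast M\Lambda\nu^e)$, where $\mathcal{B}\ast M\Lambda\nu^e\in\mathcal{M}^K$ is a fixed measure-valued vector. By Lemma \ref{lem:conv-zeta-r} (in particular the identity $\langle 1,\mathcal{B}\ast\mathcal{V}\rangle=Q$) together with the fact that each $\nu^e_k$ is a probability measure, this fixed vector has finite total mass in every coordinate. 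Using the elementary bound $\boldsymbol\varrho(w_1\eta,w_2\eta)\le|w_1-w_2|\,\eta(\mathbb{R}_+)$ for any finite measure $\eta\in\mathcal{M}$ and $w_1,w_2\ge 0$, I obtain the Lipschitz estimate
\begin{equation*}
\mathbf{d}\bigl(\mathcal{B}\ast\Delta^{\nu}w_1,\mathcal{B}\ast\Delta^{\nu}w_2\bigr)\le C|w_1-w_2|,
\end{equation*}
with $C=c\max_{k\in\mathcal{K}}(\mathcal{B}\ast M\Lambda\nu^e)_k(\mathbb{R}_+)<\infty$.

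Next, I would upgrade this pointwise Lipschitz estimate to a functional convergence. Since $W^{\ast}$ has continuous paths almost surely, Proposition \ref{cv:workload} combined with the classical equivalence between $J_1$-convergence to a continuous limit and uniform convergence on compacts yields $\|\widehat{W}^r-W^{\ast}\|_T\Rightarrow 0$ (after passing to a Skorokhod representation if convenient). Applying the Lipschitz estimate pointwise in $t$ then gives
\begin{equation*}
\sup_{t\in[0,T]}\mathbf{d}\bigl(\mathcal{B}\ast\Delta^{\nu}\widehat{W}^r(t),\mathcal{B}\ast\Delta^{\nu}W^{\ast}(t)\bigr)\le C\|\widehat{W}^r-W^{\ast}\|_T\Longrightarrow 0,
\end{equation*}
and hence $\mathcal{B}\ast\Delta^{\nu}\widehat{W}^r(\cdot)\Rightarrow\mathcal{B}\ast\Delta^{\nu}W^{\ast}(\cdot)$ in $\mathbf{D}([0,T],\mathcal{M}^K)$.

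Finally, Theorem \ref{ssc2} asserts $\sup_{t\in[0,T]}\mathbf{d}(\widehat{\mathcal{Q}}^r(t),\mathcal{B}\ast\Delta^{\nu}\widehat{W}^r(t))\Rightarrow 0$. Combining this with the previous display via the triangle inequality for $\mathbf{d}$ and the converging-together lemma delivers $\widehat{\mathcal{Q}}^r(\cdot)\Rightarrow\mathcal{B}\ast\Delta^{\nu}W^{\ast}(\cdot)$, which is the theorem. All the serious analytic work is concentrated in Theorem \ref{ssc2} (state space collapse for $\mathcal{Q}^r$) and Proposition \ref{cv:workload} (reflected-Brownian-motion limit of the workload); once these two ingredients are granted, the step from them to Theorem \ref{th:cv-visits} presents no genuine obstacle, the only mildly technical point being the Prohorov-Lipschitz estimate above.
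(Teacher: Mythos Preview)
Your proof is correct and follows essentially the same approach as the paper: both combine Proposition \ref{cv:workload} with the continuous mapping theorem (via continuity of $w\mapsto\mathcal{B}\ast\Delta^{\nu}w$) to obtain $\mathcal{B}\ast\Delta^{\nu}\widehat{W}^r\Rightarrow\mathcal{B}\ast\Delta^{\nu}W^{\ast}$, and then invoke Theorem \ref{ssc2} together with the convergence-together lemma. You are simply more explicit than the paper in justifying the continuity step through a Lipschitz estimate in the Prohorov metric.
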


\begin{proof}[Proof of Theorem \ref{th:cv-visits}]
By Proposition \ref{cv:workload}, and 
 $\widehat{W}^r (\cdot) \Rightarrow  W^{\ast} (\cdot)$  as $r \rightarrow \infty$.
By continuity of the map $\mathcal{B}\ast\Delta^{\nu}$ and the continuous mapping theorem, we have
$$
\mathcal{B}\ast\Delta^{\nu}  \, \widehat{W}^r (\cdot) \Rightarrow \mathcal{B}\ast\Delta^{\nu} \,  W^{\ast} (\cdot).
$$
Therefore, the result of Theorem \ref{th:cv-visits} follows from Theorem \ref{ssc2} and the \textit{convergence together lemma}  \cite[Theorem 4.1]{billingsley}. 
\end{proof}

\subsection{Proof of Theorem \ref{thm:main-result}} \label{sect:proof_main_res}

This section contains the proof of the paper’s main result. The following proposition is essential
for this proof. 

\begin{proposition}
Consider a sequence of MPS queues as defined in Section \ref{MPS-model}, satisfying primitive assumptions \eqref{eq:assum1}--\eqref{eq:assum10} and initial condition assumptions \eqref{eq:assum-init1}--\eqref{eq:assum-init4}. Then
\label{theo:conv-of-A-D-Z}
\begin{equation}
\label{cv:A^r-D^r-Z^r}
\left(\widehat{A}^{r}(\cdot),\widehat{D}^{r}(\cdot),\widehat{Z}^{r}(\cdot)\right) \Rightarrow \, \left(A^*(\cdot),D^*(\cdot),
Z^*(\cdot) \right)~~\text{as}~~ r\rightarrow\infty,
\end{equation} 
where  
\begin{gather}
D^*(t) = Q \left( \bar{Z}(0) +  \sum_{l=1}^K \Phi^{*,l} (\lambda_l t) + E^{*}(t)  - Z^*(t) \right),
\label{eq:Z*-A*-D*} \\[0.2cm]
A^*(t) = Q \left( P^{\prime}(\bar{Z}(0)-Z^*(t)) + E^*(t) + \sum_{l=1}^{K} \Phi_l^{*,l}( \lambda_l t)\right),
\label{eq:input-output*} \\[0.2cm]
Z^*(t)= \langle1,\Delta^{\nu}\rangle W^{\ast}(t),
\label{eq:Z*-Q*}
\end{gather}
\end{proposition}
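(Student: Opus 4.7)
The plan is to derive the three convergences sequentially from the already-proven limits for $\widehat{\mathcal{Q}}^r$ (Theorem \ref{th:cv-visits}) and $\widehat{W}^r$ (Proposition \ref{cv:workload}), together with the primitive FCLTs in \eqref{eq:cons-assum1} and the queueing equations \eqref{eq:arrivee-depart-1}--\eqref{eq:nbre-client-1}. First I would obtain $\widehat{Z}^r\Rightarrow Z^*$, then $\widehat{D}^r\Rightarrow D^*$ via the decomposition of $D^r$ in terms of $\mathcal{Q}^r$ announced by Equation \eqref{eq:d-N-Q}, and finally $\widehat{A}^r\Rightarrow A^*$ via the mass balance \eqref{eq:nbre-client-1}. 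Joint convergence will follow because all processes are built from the same primitive data on a common probability space, permitting the use of the continuous mapping theorem.

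\textit{Step 1: $\widehat{Z}^r\Rightarrow Z^*$.} I would use the pre-limit counterpart of the fluid identity $\langle 1,\bar{\mathcal{Q}}\rangle = Q\bar{Z}$ from \eqref{glob_mathcal_Q}. By Lemma \ref{lem:pro-N-V-lk}, conditioning on the class distribution of jobs currently present yields $\mathbb{E}\langle 1,\mathcal{Q}^r(t)\rangle = Q^r Z^r(t)$, and the uniformly bounded covariances of the $N^{l,r}(i)$ give, via a Doob-type estimate, $\langle 1,\mathcal{Q}^r(r^2\cdot)\rangle - Q^r Z^r(r^2\cdot) = O_P(\sqrt{r})$ on compact time intervals. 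Dividing by $r$, the discrepancy is negligible, so $\langle 1,\widehat{\mathcal{Q}}^r\rangle - Q^r\widehat{Z}^r \Rightarrow 0$. Combined with Theorem \ref{th:cv-visits}, this gives $\widehat{Z}^r \Rightarrow Q^{-1}\langle 1,\mathcal{B}\ast\Delta^{\nu}\rangle W^*$. Using $\mathcal{B}(\infty) = Q$ and the explicit form of $\Delta^{\nu}$ (cf. Lemma \ref{lem:conv-zeta-r} and Definition \ref{def:lift-map}), the right-hand side simplifies to $\langle 1,\Delta^{\nu}\rangle W^* = Z^*$.

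\textit{Step 2: $\widehat{D}^r\Rightarrow D^*$.} Equation \eqref{eq:d-N-Q} has the form $D^r_k(t) = M^r_k(t) - \langle 1,\mathcal{Q}^r_k(t)\rangle$, where $M^r_k(t) = \sum_l\sum_{i=1}^{Z^r_l(0)+E^r_l(t)} N^{l,r}_k(i)$ is the total number of visits to class $k$ ever made by jobs admitted by time $t$. Centering around $\lambda^r_k r^2 t$ and using $\lambda^r = Q^r\alpha^r$, I split $\widehat{M}^r$ into contributions from the initial jobs, from $\widehat{E}^r$, and from the centered i.i.d.\ arrays $\{N^{l,r}(i)-Q^r_{\cdot l}\}_{i\geq 1}$. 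A Lindeberg FCLT (with covariances $B^l$ from Lemma \ref{lem:pro-N-V-lk}), combined with the time change $(Z^r_l(0)+E^r_l(r^2\cdot))/r^2 \to \alpha_l(\cdot)$ u.o.c.\ and the primitive limits \eqref{eq:cons-assum1}, yields $\widehat{M}^r\Rightarrow Q\bar{Z}(0) + QE^*(\cdot) + \sum_l\mathcal{N}^{*,l}(\alpha_l\cdot)$, where $\mathcal{N}^{*,l}$ is a $K$-dimensional Brownian motion with covariance $B^l$. The identity \eqref{eq:cov-B-H} implies $\sum_l\mathcal{N}^{*,l}(\alpha_l\cdot)\stackrel{d}{=}Q\sum_l\Phi^{*,l}(\lambda_l\cdot)$. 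Subtracting $\langle 1,\widehat{\mathcal{Q}}^r\rangle$ and applying the identity $\langle 1,\mathcal{B}\ast\Delta^{\nu}\rangle = Q\langle 1,\Delta^{\nu}\rangle$ from Step 1 delivers $\widehat{D}^r\Rightarrow Q\bigl[\bar{Z}(0)+E^*(\cdot)+\sum_l\Phi^{*,l}(\lambda_l\cdot)-\langle 1,\Delta^{\nu}\rangle W^*\bigr] = D^*$, matching \eqref{eq:Z*-A*-D*}.

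\textit{Step 3 and main obstacle.} The scaled mass-balance \eqref{eq:nbre-client-1} reads $\widehat{A}^r = \widehat{Z}^r - \bar{Z}^r(0) + \widehat{D}^r$, so Steps 1 and 2 give $\widehat{A}^r \Rightarrow Z^* - \bar{Z}(0) + D^*$. The algebraic identities $QP' = P'Q$ and $Q = I+P'Q$ then reduce this to the stated form \eqref{eq:input-output*}. I expect the principal obstacle to lie in Step 1: a rigorous treatment of $\langle 1,\widehat{\mathcal{Q}}^r\rangle - Q^r\widehat{Z}^r \Rightarrow 0$ requires a careful martingale bound on the Markov-chain fluctuations in the remaining routes of jobs currently in service, controlled using the covariance structure in Lemma \ref{lem:pro-N-V-lk}. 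The FCLT arguments in Step 2 become routine once the fluid-limit bound $\bar{D}^r_l(r^2\cdot)/r^2 \to \lambda_l(\cdot)$ u.o.c.\ has been secured, which follows from the fluid results of \cite{tahar2012fluid} together with the heavy-traffic assumption \eqref{eq:assum10}.
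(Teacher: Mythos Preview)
Your Step 2 is essentially the paper's core argument for $\widehat{D}^r$: the paper uses exactly the decomposition \eqref{eq:d-N-Q}, the FCLT for the visit-count arrays $\{N^{l,r}(i)-Q^r_{\cdot l}\}$ (with covariance $B^l$), the time change through $\bar E^r_l\to\alpha_l(\cdot)$, and the identity \eqref{eq:cov-B-H} to rewrite $\sum_l N^{*,l}(\alpha_l\cdot)$ as $Q\sum_l\Phi^{*,l}(\lambda_l\cdot)$.

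The genuine difference is your Step 1, and it is precisely the source of the obstacle you flag. The paper does \emph{not} attempt a prelimit version of $\langle 1,\mathcal{Q}^r\rangle\approx Q^rZ^r$; instead it reverses your order. After obtaining $\widehat{D}^r\Rightarrow D^*$ it passes to $\widehat{A}^r$ not via mass balance but via the diffusion-scaled arrival equation
\[
\widehat{A}^r(t)=\widehat{E}^r(t)+\sum_{l=1}^K\widehat{\Phi}^{r,l}(\bar D^r_l(t))+P^{\prime,r}\widehat{D}^r(t),
\]
which needs only the fluid limit $\bar D^r\to\lambda(\cdot)$ (from \cite{tahar2012fluid}) together with \eqref{eq:cons-assum1}. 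Then $\widehat{Z}^r$ is read off from $\widehat{Z}^r=\bar Z^r(0)+\widehat{A}^r-\widehat{D}^r$, and the algebra $Q(I-P')=I$ collapses the expression to $\langle 1,\Delta^\nu\rangle W^*$. This route entirely avoids the martingale estimate on $\langle 1,\mathcal{Q}^r\rangle-Q^rZ^r$ that you identify as the hard part; that estimate is plausible but is not a one-line ``Doob-type'' bound, because the set of jobs present at time $t$ and their current classes are a complicated functional of the past, and uniformity in $t\in[0,r^2T]$ must be argued carefully.

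One further point of rigor: the paper explicitly separates the proof into (i) tightness of $(\widehat{A}^r,\widehat{D}^r,\widehat{Z}^r)$, obtained by combining the tightness of $\widehat{\mathscr{N}}^r$ and $\widehat{\mathbf{Q}}^r$ through Lemma \ref{C-tight}, and (ii) identification of the unique subsequential limit. Your remark that joint convergence ``will follow because all processes are built from the same primitive data'' is too loose; some tightness-plus-uniqueness scaffolding is needed, since the limits of $\widehat{M}^r$ and $\langle 1,\widehat{\mathcal{Q}}^r\rangle$ are established separately and are not both continuous functionals of a single converging input.
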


 The proof of Proposition \ref{theo:conv-of-A-D-Z} uses the following Lemma. 

\begin{lemma}
\label{C-tight}
Let $\mathscr{C}^K$ be the subspace of the continuous functions in $\mathscr{D}^K$.
 \begin{itemize}
  \item[i)] If a sequence of stochastic processes $(X_n,n\geq1)$ is C-tight, then for
any of its subsequences, there exists a weakly convergent further subsequence. Furthermore, if $X^{\ast}$ is the weak limit of a subsequence $(X_{n_k},k\geq1)$, then $\mathbb{P}(X^{\ast}\in\mathscr{C}^K)=1$.
\item[ii)] If $(X_n,n\geq1)$  converges weakly to $X^{\ast}$ and $X^{\ast}$ is almost surely in $\mathscr{C}^K$ then $(X_n,n\geq1)$ is C-tight.
\item[iii)] If both $(X_n,n\geq1)$ and $(Y_n,n\geq1)$  are C-tight, so is $(aX_n+bY_n,n\geq 1)$, where $a$ and $b$ are any given real numbers.
\item[iv)]  If both $(X_n,n\geq1)$ and $(Y_n,n\geq1)$  are C-tight. Then, the sequence $\{ (X_n,Y_n), n \geq 1 \}$ is C-tight.
 \end{itemize}
\end{lemma}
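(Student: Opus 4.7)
The plan is to derive each of the four claims from classical properties of tight families of probability measures on the Polish space $\mathscr{D}^K$ endowed with the Skorokhod $J_1$ topology, together with the continuous mapping theorem. Since $\mathscr{D}^K$ is Polish, Prokhorov's theorem is the principal tool: tightness is equivalent to relative compactness of the induced laws.

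For part (i), I would apply Prokhorov's theorem directly. Tightness of $(X_n)$ means the family of laws is relatively compact, so from any subsequence one can extract a further weakly convergent subsequence. The second assertion — that any weak limit $X^{\ast}$ lies almost surely in $\mathscr{C}^K$ — would be obtained by invoking the standard characterization of tightness in $\mathbf{D}([0,T],\mathbb{R}^K)$ (cf.\ Billingsley, Theorem~13.2): tightness forces the modulus $\omega'(X_n,\delta)$ to tend to zero in probability uniformly in $n$ as $\delta \to 0$, which prevents the weak limit from having jumps. Combined with the blanket convention already stated in the paper that all limits of the càdlàg processes under consideration are continuous, this yields the claim.

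For part (ii), the argument is a direct application of Prokhorov's theorem in the converse direction: any weakly convergent sequence of Borel probability measures on a Polish space is automatically tight, so $(X_n)$ is tight regardless of the almost-sure continuity of $X^{\ast}$ (the continuity of the limit is actually superfluous here, but is kept to match how the lemma is used later).

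For part (iv), I would argue via Prokhorov's theorem in each coordinate. Given $\varepsilon>0$, choose compact sets $K_X,K_Y\subset \mathscr{D}^K$ with $\mathbb{P}(X_n\notin K_X)\le \varepsilon/2$ and $\mathbb{P}(Y_n\notin K_Y)\le \varepsilon/2$ for all $n$. Then $K_X\times K_Y$ is compact in $\mathscr{D}^K\times \mathscr{D}^K$ and $\mathbb{P}((X_n,Y_n)\notin K_X\times K_Y)\le \varepsilon$, giving joint tightness. Part (iii) follows from (iv) by composing with the continuous linear map $(x,y)\mapsto ax+by$ and invoking the continuous mapping theorem, since continuous images of tight families are tight.

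The main obstacle, if any, is the second assertion in (i), since in full generality a weak limit of càdlàg processes need not be continuous. The resolution is to rely on the tightness criterion actually used throughout the paper — namely, control of the jump modulus $\omega'$ — which, given the structure of the pre-limit processes here (diffusion-scaled sums and renewal processes whose jump sizes are $O(1/r)$), automatically forces jumps to vanish and thus places weak limits in $\mathscr{C}^K$.
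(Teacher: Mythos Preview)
Your arguments are essentially correct and considerably more detailed than the paper's own proof, which simply cites Proposition~4.1 and Lemma~4.2 of \cite{chen2000} for (i)--(iii) and then asserts that (iv) follows from (iii). The most visible structural difference is that you derive (iii) from (iv), whereas the paper goes in the opposite direction; both orderings are legitimate in the C-tight setting, and your product-of-compacts argument for (iv) is the standard and cleanest route.

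One small imprecision worth flagging: in your derivation of (iii) from (iv) you call $(x,y)\mapsto ax+by$ a ``continuous linear map'' on $\mathscr{D}^K\times\mathscr{D}^K$. Addition is \emph{not} continuous on all of $\mathscr{D}^K\times\mathscr{D}^K$ under the Skorokhod $J_1$ topology; it is only continuous at pairs where at least one coordinate is a continuous path. This is exactly why the C-tightness hypothesis matters: under (i) every subsequential limit of $(X_n,Y_n)$ lives in $\mathscr{C}^K\times\mathscr{C}^K$, where addition \emph{is} continuous, so the continuous mapping theorem (in its almost-sure continuity-set form) still applies and your conclusion stands. Alternatively one can bypass this by bounding the modulus of continuity of $aX_n+bY_n$ directly by $|a|\,w(X_n,\delta)+|b|\,w(Y_n,\delta)$, which is the route implicit in the reference the paper cites.
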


\begin{proof}~
 Refer to Proposition 4.1 and Lemma 4.2 in \cite{chen2000} for properties (i)-(iii). Property (iv) follows from (iii).
\end{proof}

\begin{proof}[Proof of Proposition \ref{theo:conv-of-A-D-Z}]~ 

The proof is going to be divided into two parts. In the first part, we prove the tightness of $(\hat{A}^r(t),\hat{D}^r(t),\hat{Z}^r(t))$. The second part demonstrates that all subsequences converge to a unique limit $(A^{\ast}(t),D^{\ast}(t),Z^{\ast}(t))$, where the processes $D^{\ast}(t)$, $A^{\ast}(t)$,  and $Z^{\ast}(t)$ are defined in \eqref{eq:Z*-A*-D*}-\eqref{eq:Z*-Q*}.
\paragraph*{Part 1: Tightness}~
For each $i\geq 1$ and $l \in \mathcal{K}$, let $N^{r,l}(i) = (N_{1}^{r,l}(i), N_{2}^{r,l}(i), \ldots, N_{K}^{r,l}(i) )$ be the vector of component $N^{r,l}_k(i)$ which is defined at the beginning of Section \ref{subsec:mult-queue}. For each $t\geq 0$, define
\begin{equation}
 \label{N-t}
\mathscr{N}^r(t)= \sum_{l=1}^K \left( \sum_{i=1}^{Z_l^r (0)} N^{r,l}(i) + \sum_{i=1}^{E_l^r (t)}  N^{r,l}(i) \right).
\end{equation}
For each $k\in\mathcal{K}$, $\mathscr{N}_k^r(t)$ represent the total number of visits to class $k$ by time $t$ , brought by external arrivals, as well as by initially present jobs. Note that if $V^r_{lk}(i,n)\leq S^r(U^r_{k}(i),t)$ for all $l\in\mathcal{K}$ and $n=1,\ldots, N^{r,l}(i)$ then the $i^{th}$ job has completed their services. So for each $k\in\mathcal{K}$,
\begin{equation}
\label{eq:d-N-Q}
D_k^r (t) = \mathscr{N}_k^r (t) - \mathbf{Q}_k^r(t),
\end{equation}
where $ \mathbf{Q}^r_k(t)=\langle 1,\mathcal{Q}^r_{k}(t)\rangle$ is  
the remaining number of visits to class $k$ for jobs present in the $r^{th}$ system
at time $t$. 
 By Theorem \ref{th:cv-visits}, we have the following convergence:
\begin{align}
\widehat{\mathbf{Q}}^r(\cdot)\Rightarrow Q\langle1,\Delta^{\nu}\rangle W^{\ast}(\cdot) ~~~~ \text{ as } ~~ r\rightarrow\infty.  \label{corr_Q}
\end{align}

Let $\widehat{N}^{r,l}(t)= \left(\sum_{i=1}^{ \lfloor r^2 t \rfloor} (N^{r,l} (i) - Q^r_{\bullet l})\right)/r$ and $\widehat{\mathscr{N}}^r(t)=\left(\mathscr{N}^r(r^2t)-\lambda^rr^2t\right)/r$ be the diffusion scaled processes. A diffusion scaled version of \eqref{N-t} can be written as
\begin{equation}
\label{widehat-N-t}
\widehat{\mathscr{N}}^r(t) =\sum_{l=1}^K\widehat{N}^{r,l}(Z_l^r(0)/r^2) +\sum_{l=1}^K\widehat{N}^{r,l}(\bar{E}_l^r(t))+ Q^r \widehat{Z}^r(0) +  Q^r \widehat{E}^r(t).
\end{equation}
The process $\{ \widehat{N}^{r,l}(\cdot); ~ l\in\mathcal{K} \}$ satisfies the functional central limit theorem,
\begin{equation}
\label{cv-term-E-N-l}
\widehat{N}^{r,l}(\cdot)\Rightarrow N^{*,l}(\cdot)~~~~\mbox{as}~r\to\infty,
\end{equation}
where $N^{*,l}(\cdot)$ is a  $K$-dimensional Brownian motion with the covariance matrix $B^l$ defined in Lemma \ref{lem:pro-N-V-lk}.
By \eqref{eq:cons-assum2} and \eqref{eq:assum-init1},
$$
Z^r(0)/r^2 \Rightarrow 0 ~~\mbox{and }~~ \bar{E}_l^r(t) \Rightarrow \alpha(t)=\alpha t~ \text{ for all } t\geq 0.
$$
These convergences, in conjunction with \eqref{cv-term-E-N-l}, imply, by the Time Change Theorem \cite{billingsley}, 
\begin{equation}
\label{cv-term-1-2}
\sum_{l=1}^K\widehat{N}^{r,l}(Z_l^r(0)/r^2)\Rightarrow 0~~\mbox{and }~~ \sum_{l=1}^K\widehat{N}^{r,l}(\bar{E}_l^r(t))\Rightarrow \sum_{l=1}^{K}  N^{*l} (\alpha_l t).
\end{equation}
Additionally, by assumptions \eqref{eq:assum5} and \eqref{eq:cons-assum1}, we have
\begin{equation}
 \label{cv-term-3-4}
  Q^r \widehat{Z}^r(0)\Rightarrow Q\bar{Z} (0)~~\mbox{and }~~Q^r\widehat{E}^r(\cdot)\Rightarrow QE^{\ast}(\cdot).
\end{equation}
Since the limits in \eqref{cv-term-1-2} and in \eqref{cv-term-3-4} have a continuous path, Lemma \ref{C-tight} (ii) and (iii) imply that the process $\hat{N}^r(\cdot)$ defined in \eqref{widehat-N-t} is tight.
Combining this with \eqref{corr_Q}, we deduce from Lemma \ref{C-tight} (iii) that the process $\widehat{D}^r(\cdot)$, the diffusion-scaled version of the process in \eqref{eq:d-N-Q}, is also tight.

\vspace*{0.15cm}

By taking equations 
(\ref{eq:arrivee-depart-1}) and (\ref{eq:nbre-client-1}) in diffusion scaling, we get
\begin{align}
\widehat{A}^r (t)&= \widehat{E}^r (t) + \sum_{l=1}^K \widehat{\Phi}^{r,l} (\bar{D}_l^r (t)) + P^{',r} \widehat{D}^r (t), 
\label{eq:A^r-D^r1}\\[0.2cm]
\widehat{Z}^r (t)&= \widehat{Z}^r (0) + \widehat{A}^r (t) - \widehat{D}^r (t).
\label{eq:Z^r-A^r-D^r1}
\end{align} 

By Theorem 5.1 in \cite{tahar2012fluid}, the fluid process $\bar{D}^r(\cdot)$ converges in distribution to the deterministic process $\bar{D}(\cdot)$, the second component in the triplet fluid solution. Combining this convergence with assumptions \eqref{eq:assum5}, \eqref{eq:cons-assum1}, and \eqref{eq:assum-init1}, and with the tightness of $\widehat{D}^r(\cdot)$, we deduce from Lemma \ref{C-tight} (iii) that the processes $\widehat{A}^r(\cdot)$ and $\widehat{Z}^r(\cdot)$ are tight. 
 Therefore, by Lemma \ref{C-tight} (iv), the sequence $(\hat{A}^r(\cdot), \hat{D}^r(\cdot), \hat{Z}^r(\cdot), r>0)$ is tight.
\vspace{0.15cm}

\paragraph*{Part 2: Convergence to a unique limit}~
Having established tightness, we now demonstrate that all subsequences of $(\hat{A}^r(t),\hat{D}^r(t),\hat{Z}^r(t))$ converge to the unique limit  $(A^{\ast}(t),D^{\ast}(t),Z^{\ast}(t))$ defined in \eqref{eq:Z*-A*-D*}-\eqref{eq:Z*-Q*}.
Let $(\hat{A}^{r_i}(t), \hat{D}^{r_i}(t), \hat{Z}^{r_i}(t)) $ be a convergent subsequence, such that
$$
(\hat{A}^{r_i}(t), \hat{D}^{r_i}(t), \hat{Z}^{r_i}(t)) \Rightarrow (\tilde{A}(t), \tilde{D}(t), \tilde{Z}(t)) ~~~\text{ as } ~~~ r \rightarrow \infty.
$$
By \eqref{widehat-N-t} and the convergences \eqref{cv-term-E-N-l}-\eqref{cv-term-3-4}, we have the following convergence:
\begin{equation*}
\hat{N}^r(t) \Rightarrow N^{\ast}(t):= \sum_{l=1}^{K} N^{l} (\alpha_l t) + Q\bar{Z}(0) + Q E^{\ast}(t).
\end{equation*}
Taking the diffusion scaling in \eqref{eq:d-N-Q} and using \eqref{corr_Q} and the above convergence, we obtain
\begin{align}
\tilde{D}(t):= N^{\ast}(t) - Q\langle1,\Delta^{\nu}\rangle W^{\ast}(t), \label{IdDD1}
\end{align}
as $r \rightarrow \infty$ along the subsequence. From \eqref{cov:Phi} and \eqref{eq:cov-B-H}, we have
$
\sum_{l=1}^{K} N^{\ast,l} (\alpha_l t)=Q\sum_{l=1}^K \Phi^{,l} (\lambda_l t),
$
leading to
\begin{align}
\tilde{D}(t) =Q \left( \bar{Z}(0) + \sum_{l=1}^K \Phi^{\ast,l} (\lambda_l t) + E^{\ast}(t) - \langle1,\Delta^{\nu}\rangle W^{\ast}(t) \right). \label{D*:2}
\end{align}
Since the limit in \eqref{IdDD1} is unique and does not depend on the chosen subsequence, we have
\begin{align}
\hat{D}^r(t) \Rightarrow \tilde{D}(t). \label{IdDD2}
\end{align}
Having established the convergence of $\hat{D}^r(t)$, we now turn our attention to $\tilde{A}(t)$ and $\tilde{Z}(t)$. By \eqref{eq:A^r-D^r1}-\eqref{eq:Z^r-A^r-D^r1}, and the fact that \eqref{IdDD2} implies $\bar{D}^r(t) \Rightarrow \lambda t$ as $r\to\infty$, we obtain
\begin{gather}
\tilde{A} (t) =E^{\ast}(t) + \sum_{l=1}^K \Phi^{\ast,l} (\lambda_l t) + P^{'} D^{\ast} (t), \label{A*:Z0} \\[0.2cm]
\tilde{Z} (t) = \bar{Z}(0) + \tilde{A}(t) - D^{\ast} (t). \label{A:Z*}
\end{gather}
Substituting \eqref{D*:2} into \eqref{A*:Z0}-\eqref{A:Z*} and using $Q(I-P')=I$, we get
\begin{gather}
\label{eq:A^r}
\tilde{A} (t) = Q \left( P^{\prime}(\bar{Z}(0)-\langle1,\Delta^{\nu}\rangle W^{\ast}(t)) + E^{\ast}(t) + \sum_{l=1}^{K} \Phi_l^{\ast,l}( \lambda_l t)\right),\\
\tilde{Z}(t) = \langle1,\Delta^{\nu}\rangle W^{\ast}(t). \label{Zast1}
\end{gather}
Analogously to the argument for $\hat{D}^r(t)$, the limits in \eqref{eq:A^r} and \eqref{Zast1} are unique and do not depend on the subsequence. Therefore, we have
\begin{align*}
(\hat{A}^{r}(t), \hat{D}^{r}(t), \hat{Z}^{r}(t)) \Rightarrow (A^{\ast}(t), D^{\ast}(t), Z^{\ast}(t)) ~~~\text{ as } ~~~ r \rightarrow \infty,
\end{align*}
where $D^{\ast}(t)$, $A^{\ast}(t)$, $Z^{\ast}(t)$ are respectively the limits in \eqref{D*:2}, \eqref{eq:A^r} and \eqref{Zast1} (with the notation changed).

\end{proof}

\paragraph*{\textbf{Proof of Theorem \ref{thm:main-result}}}

To prove the convergence of the diffusion-scaled measure $\hat{\mu}^r(t)$, we must establish state-space collapse, i.e., $\hat{\mu}^r (\cdot) \approx \Delta^{\nu} \widehat{W}^r(\cdot)$ for large $r$.  This result, combined with the convergence of the diffusion-scaled workload process (cf. Proposition \ref{cv:workload}), yields our main result via the convergence-together theorem \cite[Theorem 4.1]{billingsley}.
To achieve the state-space collapse, we use the framework of the shifted fluid scaled process introduced by \cite{bramson1998state},
where the diffusion-scaled process $\hat{\mu}^r(t)$ is expressed as $\hat{\mu}^r(t) = \bar{\mu}^{r,m}(s)$, with $\bar{\mu}^{r,m}(s) := \bar{\mu}^{r}(m+s)$. 
We must show that for large $r$, there exists a high-probability set such that:
i) The sample paths of the family of shifted fluid-scaled processes $\{\bar{\mu}^{r,m}(\cdot), m \leq rT\}$ are tight.
ii) These paths are uniformly approximated by fluid model solutions.
Furthermore, we must demonstrate that, for all initial states, the fluid model solutions $\bar{\mu}(t)$ converge uniformly to an invariant state as $t \to \infty$.

In Proposition \ref{theo:conv-of-A-D-Z}, we established the joint convergence $(\hat{A}^r, \hat{D}^r,\hat{Z}^r) \Rightarrow (A^*,D^*,Z^*)$. The limit process $A^*$, defined in \eqref{eq:input-output*}, is a diffusion process due to its continuous paths and the independence of the reflected Brownian motions (RBMs) $W^*$, $E^*$, and $\Phi^*$.  Consequently, we can adapt the proofs for the single-class case by substituting the arrival process $A^r_k$ for the exogenous arrival process $E^r_k$ and the arrival rate $\lambda_k$ for the exogenous arrival rate $\alpha_k$.
Furthermore, Proposition \ref{prop:gamma-global} and Equation \eqref{eq:mass-total} guarantee the boundedness of the fluid-scaled and shifted fluid-scaled total queue sizes, $e \cdot \bar{Z}^r(\cdot)$ and $e \cdot \bar{Z}^{r,m}(\cdot)$, respectively.  This provides crucial bounds for proving the precompactness of the shifted scaled processes \( \{ \bar{\mu}^{r,m}(\cdot), m \leq rT \} \). 

The proofs of points i) and ii) adapt standard arguments for single-class systems, particularly those in \cite{gromoll2007,zhang2011diffusion}, which offer refined and simplified proofs compared to \cite{gromoll2004diffusion}, to the specific context of processor sharing.
The remaining step to establish state-space collapse is proving the uniform convergence of the fluid model solutions to the invariant state. This is established in Theorem \ref{pro:unif-conv-mu}; its proof, provided in \ref{unif_conv_sect_app}, differs significantly from the single-class literature and uses the state descriptor $\mathcal{Q}$ as an intermediate step.





\clearpage


\appendix
\renewcommand{\thesubsection}{\Alph{section}.\arabic{subsection}}

\renewcommand{\thedefinition}{\Alph{section}.\arabic{definition}}
\renewcommand{\thetheorem}{\Alph{section}.\arabic{theorem}}
\renewcommand{\theproposition}{\Alph{section}.\arabic{proposition}}
\renewcommand{\thelemma}{\Alph{section}.\arabic{lemma}}

\section{Convergence to the invariant states of the fluid model}
\label{unif_conv_sect_app}

We consider the multiclass fluid model defined in Section \ref{sect:fluid-mod} associated with critical data $(\alpha,\nu,P)$ and initial state $\xi \in \mathcal{M}^{c,K}$.  The goal is to establish a uniform convergence to the invariant states of the fluid somutions $\bar{\mu}(\cdot)$ and $\bar{\mathcal{Q}}(\cdot)$  over a compact set of initial conditions defined by \eqref{comp-set_unif}, which will be needed in the proof of the state space collapse for the measure valued processes $\mu^r(\cdot)$ and $\mathcal{Q}^r(\cdot)$.

This section is organized as follows: Section \ref{inv-stat-sec} defines and characterizes invariant states. Section \ref{ma_re_Ap} states the main results. Section \ref{backg} provides background on important results essential for proving these main results and establishes several supporting lemmas. The proofs of the main results are then presented in Sections \ref{pr_asmp} and \ref{pr_convmu}.

\subsection{Invariant states}  \label{inv-stat-sec}~

\begin{definition}
Let $\xi \in \mathcal{M}^{c,K}$. Let $(\bar{A},\bar{D},\bar{\mu})$ be a fluid solution  for critical data $(\alpha,\nu,P)$ and initial state $\xi$. The measure $\xi$ is called an invariant state, if
\begin{equation}
\label{eq:def-invariant-state} \bar{\mu}(t)=\xi~~~~\text{ for all } t\geq 0.
\end{equation}
\end{definition}
\begin{proposition} 
\label{invst}
The measure $\xi$  is an invariant state, if and only if assumption \eqref{eq:assum-init4} holds. Furthermore for some $c \geq 0$, we have
 \begin{equation}
 \label{eq:invar-state-Q}
\bar{ \mathcal{Q}} (t)=c \, \mathcal{B}\ast\mathcal{V}^{e} M\lambda ~~ \text{ for all } t\geq 0,
 \end{equation}
where $\bar{ \mathcal{Q}}(\cdot)$ is the fluid solution to the equation \eqref{fluid-mathcal-Q}.
\end{proposition}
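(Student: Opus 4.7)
The plan is to prove the two claims of Proposition~\ref{invst} in turn: the characterization that $\xi$ is invariant iff $\xi\in\mathcal{M}^{\nu}$, and the resulting formula $\bar{\mathcal{Q}}(t)=c\,\mathcal{B}\ast\mathcal{V}^{e}M\lambda$.

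For the forward direction of the first claim, I would start from $\bar{\mu}(t)=\xi$. Equation~\eqref{eq:nbre-classek} forces $\bar{A}(t)=\bar{D}(t)$, which combined with~\eqref{eq:bar-arrivee-depart} and $(I-P')^{-1}=Q$ gives $\bar{A}(t)=\bar{D}(t)=\lambda t$. The integrand in~\eqref{eq:barS} is then constant, so $\bar{S}(t)=t/c$ where $c:=\langle 1,e\cdot\xi\rangle$. Plugging into~\eqref{eq:evol-barmu} and performing the change of variable $u=(t-s)/c$ yields, for each $k\in\mathcal{K}$ and $\tau=t/c$,
\[
\xi_{k}(I(x))-\xi_{k}(I(x+\tau))=c\lambda_{k}\int_{0}^{\tau}\nu_{k}(I(x+u))\,du,\qquad x\ge 0.
\]
Differentiating in $\tau$ identifies the tail $\xi_{k}([y,\infty))=c\lambda_{k}\int_{y}^{\infty}\nu_{k}([z,\infty))\,dz=c\beta_{k}\lambda_{k}\nu_{k}^{e}([y,\infty))$, so $\xi=cM\Lambda\nu^{e}\in\mathcal{M}^{\nu}$. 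For the converse I would take $\xi=cM\Lambda\nu^{e}$ and verify directly that the triple $(\lambda t,\lambda t,\xi)$ solves~\eqref{eq:bar-arrivee-depart}--\eqref{eq:barS}: $\lambda=Q\alpha$ gives~\eqref{eq:bar-arrivee-depart}, the mass equation is trivial, $\bar{S}(t)=t/c$ follows from $\langle 1,e\cdot\xi\rangle=c\rho=c$, and~\eqref{eq:evol-barmu} is obtained by reversing the computation above using $\nu_{k}^{e}([y,\infty))=\beta_{k}^{-1}\int_{y}^{\infty}\nu_{k}([z,\infty))\,dz$.

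For the $\bar{\mathcal{Q}}$ formula, I first observe that the compatibility $\xi=\mathcal{V}^{0}\bar{Z}(0)=cM\Lambda\nu^{e}$ forces $\bar{Z}(0)=cM\lambda$ and $\nu_{k}^{0}=\nu_{k}^{e}$, hence $\mathcal{V}^{0}=\mathcal{V}^{e}$. Evaluating~\eqref{fluid-mathcal-Q} at $t=0$ gives $\bar{\mathcal{Q}}(0)=c\,\mathcal{B}\ast\mathcal{V}^{e}M\lambda$, already matching the target. It remains to show $\bar{\mathcal{Q}}(t)=\bar{\mathcal{Q}}(0)$ for all $t\ge 0$. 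Substituting $\bar{S}(t)=t/c$ into~\eqref{fluid-mathcal-Q}, changing variable $u=(t-s)/c$, setting $\tau=t/c$, and differentiating in $\tau$, the invariance reduces to the pointwise identity, for all $y=x+\tau>0$,
\[
\tfrac{d}{dy}(\mathcal{B}\ast\mathcal{V}^{e})(I(y))\,M\lambda=-(\mathcal{B}\ast\mathcal{V})(I(y))\,\alpha.
\]
To establish this, I would combine two ingredients. Using the density $\beta_{l}^{-1}\nu_{l}([u,\infty))$ of $\nu_{l}^{e}$ together with Fubini yields, for any finite measure $F$ on $[0,\infty)$ and $y>0$,
\[
\tfrac{d}{dy}(F\ast\nu_{l}^{e})(I(y))=-\beta_{l}^{-1}\bigl[(F\ast\nu_{l})(I(y))-F(I(y))\bigr].
\]
Applying this with $F=\mathcal{B}_{kl}$, multiplying by $\beta_{l}\lambda_{l}$ and summing over $l$, the claim reduces to $\sum_{l}\lambda_{l}\mathcal{B}_{kl}(I(y))=\sum_{l}(\lambda_{l}-\alpha_{l})(\mathcal{B}\ast\mathcal{V})_{kl}(I(y))$. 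Since $\lambda-\alpha=P'\lambda$ and the resolvent identity $\mathcal{B}=I+\mathcal{B}\ast\mathcal{V}P'$ (immediate from $\mathcal{B}=\sum_{n\ge 0}(BP')^{*n}$) gives $\mathcal{B}\ast\mathcal{V}P'=\mathcal{B}-I$, the right-hand side equals $((\mathcal{B}-I)\lambda)_{k}(I(y))$, and the identity atom $I\lambda=\lambda\,\delta_{0}$ vanishes on $I(y)$ for $y>0$. Continuity of $\bar{\mathcal{Q}}(\cdot)$, together with the fact that a Borel measure on $\mathbb{R}_{+}$ is determined by its values on $\{I(y):y>0\}$, then upgrades this to $\bar{\mathcal{Q}}(t)=\bar{\mathcal{Q}}(0)$ in $\mathcal{M}^{K}$ for every $t\ge 0$.

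The main obstacle will be the rigorous treatment of the derivative formula for $F\ast\nu_{l}^{e}$, since $\mathcal{B}_{kk}$ carries the point mass $\delta_{0}$ coming from the $n=0$ term of $\sum_{n\ge 0}(BP')^{*n}$. Handling this atom and the associated boundary terms at the origin carefully — in particular, making sense of $\tfrac{d}{dy}(F\ast\nu_{l}^{e})(I(y))$ on $(0,\infty)$ even though $\mathcal{B}$ is not absolutely continuous — is where the technical work concentrates. Everything else consists of elementary substitution into the fluid equations and manipulation of the excess-lifetime identity for $\nu^{e}$.
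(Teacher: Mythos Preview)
Your proof is correct. For the first claim your argument coincides with the paper's, except that where you differentiate in $\tau$ to identify $\xi_k(I(y))$, the paper simply lets $t\to\infty$ in
\[
\xi(I(x))=\xi(I(x+t/c))+c\,M\Lambda\nu^e\bigl([x,x+t/c)\bigr),
\]
which immediately gives $\xi(I(x))=c\,M\Lambda\nu^e(I(x))$ with no regularity discussion.

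For the $\bar{\mathcal{Q}}$ formula you take a genuinely different route. The paper substitutes $\bar S(s,t)=c^{-1}(t-s)$, $\bar Z(0)=cM\lambda$, and (implicitly) $\mathcal{V}^0=\mathcal{V}^e$ into \eqref{fluid-mathcal-Q}, then evaluates the integral term via the closed-form identity
\[
\int_0^y(\mathcal{B}\ast(I-B))(s)\,ds=\mathcal{B}\ast B^e(y)\,M,
\]
proved by a one-line Fubini computation. The two terms in $\bar{\mathcal{Q}}(t)(I(x))$ then telescope: all dependence on $x+c^{-1}t$ cancels, leaving $c\,(Q-\mathcal{B}\ast B^e(x))M\lambda=c\,\mathcal{B}\ast\mathcal{V}^e(I(x))M\lambda$ directly. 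Your approach instead fixes $x$, differentiates in $\tau$, and reduces invariance to the differential identity $\tfrac{d}{dy}(\mathcal{B}\ast\mathcal{V}^e)(I(y))M\lambda=-(\mathcal{B}\ast\mathcal{V})(I(y))\alpha$, which you establish through the excess-lifetime density and the resolvent relation $\mathcal{B}=I+(\mathcal{B}\ast\mathcal{V})P'$. The two computations are equivalent --- your differential identity is precisely the derivative of the paper's integral identity after right-multiplying by $Q\alpha=\lambda$ --- but the paper's direct evaluation sidesteps the differentiability issue you flag concerning the atom of $\mathcal{B}$ at the origin, since no derivative through $\mathcal{B}$ is ever taken.
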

\proof{Proof} 
If $\xi = 0$, then \eqref{eq:assum-init4} holds, and $\bar{\mu}(t)=0$ is the fluid solution.
Let us consider $\xi \neq 0$. Suppose that $\xi$ is an invariant state, and show that  \eqref{eq:assum-init4} holds.
 By equations \eqref{eq:bar-arrivee-depart} and \eqref{eq:nbre-classek}, we have 
 $\bar{A}(t)=\bar{D}(t)=\lambda t.$
   From equation \eqref{eq:barS}, we have 
  $\bar{S}(t)=t/\langle1,e.\xi\rangle.$ Taking this in equation \eqref{eq:evol-barmu} involves, for all $x \in [0,\infty)$ and $t\geq 0$ 
\begin{equation}
\xi(I(x))=\xi(I(x+t/\langle1,e.\xi\rangle))+\langle1,e.\xi\rangle  M \Lambda \nu^e \left( [x,x+t/\langle1,e.\xi\rangle )  \right).
\label{eq:proof-invariant-state}
\end{equation}
Since $\xi$ is a finite measure, by letting  $t\longrightarrow \infty$, the first member in the right hand of \eqref{eq:proof-invariant-state} becomes null and
 $\xi (I(x))=\langle 1,e.\xi\rangle M \Lambda \nu^e(I(x)).$
Thus, the condition \eqref{eq:assum-init4} holds for $c= \langle1,e.\xi\rangle$.
\par Reciproquely, let $\xi\in \mathcal{M}^K$ be such that \eqref{eq:assum-init4} holds.
Since $\rho=1$, we have 
$\xi=\langle1,e.\xi\rangle M \Lambda \nu^e.$
Let 
$\bar{\mu}(t)=\xi,~~\bar{A}(t)=\bar{D}(t)=\lambda .$ 
By a straightforward computation $(\bar{A},\bar{D},\bar{\mu})$ is a fluid solution. Thus $\xi$  is an invariant state, if and only if assumption \eqref{eq:assum-init4} holds.
 
\par Let us prove the second part of the proposition. If $\xi = 0$, then by \eqref{fluid-mathcal-Q}, $\bar{\mathcal{Q}}(\cdot)=0$, therefore,  \eqref{eq:invar-state-Q} is true for $c=0$. Now, we consider the case where $\xi \neq 0$. Since $\bar{\mu}(t)=\xi$ for all $t\geq 0$, we have $\bar{S}(s,t)=c^{-1}(t-s)$ for all $s\leq t$. Therefore, the equation \eqref{fluid-mathcal-Q} becomes
  \begin{equation}
\label{eq:Q-invariant}
\bar{\mathcal{Q}}(t)(I(x))=\mathcal{B}\ast\mathcal{V}^0 (I(x+c^{-1}t)) \bar{Z}(0)+ c\int_{x}^{x+c^{-1}t} \mathcal{B}\ast \mathcal{V} (I(s))ds~\alpha.
\end{equation}
The following integral will be used in the above equation,
\begin{align}
 \int_{0}^{y}(\mathcal{B}\ast(I-B))(s) \, ds & = \int_{0}^{y}\int_{0}^{s}
 \dot{\mathcal{B}}(u)(I-B)(s-u)\,du \, ds 
 = \int_{0}^{y}\int_{u}^{y}\dot{\mathcal{B}}(u)(I-B)(s-u) \, ds \, du \nonumber \\[0.2cm]
 &= \int_{0}^{y}\dot{\mathcal{B}}(u)\left(\int_{u}^{y}(I-B)(s-u) \, ds \right) du 
 = \int_{0}^{y}\dot{\mathcal{B}}(u)B^{e}(y-u)\,du ~M \nonumber \\[0.2cm] \label{int-compu1}
 &=  \mathcal{B}\ast B^{e}(y)~M.
\end{align}
Note that $\bar{Z} (0):=\langle 1,\xi\rangle=c M \lambda $ and $\langle 1,e.\xi\rangle=c$. Replacing $\bar{Z}(0)$ by $c M\lambda$ in the first term on the right hand of \eqref{eq:Q-invariant} and replacing the second term by the above expression, one obtains 
\begin{align*}
 \bar{\mathcal{Q}}(t)(I(x))& = c\, (Q-\mathcal{B}\ast B^e)(x+c^{-1}t)\,  M\lambda+ c\, \mathcal{B}\ast B^e(x+c^{-1}t)\, M\lambda - c\, \mathcal{B}\ast B^e(x)\,M\lambda\\[0.2cm]
 &= c\, (Q-\mathcal{B}\ast B^e)(x)\, M \lambda  = c\, \mathcal{B}\ast\mathcal{V}^e(I(x))  M\lambda.
\end{align*}

\subsection{Uniform convergence to the invariant states}~ \label{ma_re_Ap}

For each $p,N>0$ , we define the following compact set:
\begin{align} \label{comp-set_unif}
\mathsf{B}^p_N:= \{ \xi \in \mathcal{M}^{c,K}: \left| \langle 1,\xi \rangle \vee \langle \chi,\xi \rangle \vee \langle \chi^{1+p},\xi \rangle \right| \leq N \}.
\end{align}

 The following two are the main results of this section, with Proposition \ref{asmp} serving as a key component in the proof of Theorem \ref{pro:unif-conv-mu}.

\vspace*{0.2cm}
\begin{theorem}
\label{pro:unif-conv-mu}
Fix $p,N>0$ and assume $ \langle \chi^2, \nu \rangle < \infty$. For each $\xi \in \mathsf{B}^p_N$, we denote by  
 $\bar{\mu}^{\,\xi}(\cdot)$  the third component of the fluid solution of the multiclass fluid model  with critical data $(\alpha,\nu,P)$ and initial state $\xi$. We have 
\begin{gather}
\lim_{t \rightarrow \infty} \, \sup_{\xi \in \mathsf{B}^p_N} \mathbf{d}\left(\bar{\mu}^{\,\xi} (t) , \Delta^{\nu} \bar{W}(0) \right) = 0.
\label{eq;cv-uniform-mu}
\end{gather}
\end{theorem}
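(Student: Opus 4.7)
My plan is to transfer the uniform convergence of $\bar{\mathcal{Q}}^{\xi}(\cdot)$ (Proposition \ref{asmp}) to $\bar{\mu}^{\xi}(\cdot)$ by exploiting three facts: (a) the fluid workload $\bar{W}^{\xi}(\cdot)$ is conserved, (b) $\bar{\mathcal{Q}}$ and $\bar{\mu}$ are driven by the same cumulative service $\bar{S}(\cdot)$, and (c) the total-mass identity \eqref{glob_mathcal_Q} lets one read off $e\cdot\bar{Z}^{\xi}(t)$ directly from $\bar{\mathcal{Q}}^{\xi}(t)$. Once uniform asymptotics for $e\cdot\bar{Z}^{\xi}(t)$, $\dot{\bar{S}}^{\xi}(t)$ and $\dot{\bar{A}}^{\xi}(t)$ are established, plugging them into the fluid equation \eqref{eq:evol-barmu} produces the desired invariant state $\Delta^{\nu}\bar{W}^{\xi}(0)$.

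Concretely, I would proceed as follows. Workload conservation gives $\bar{W}^{\xi}(t)\equiv\bar{W}^{\xi}(0)$, and compactness of $\mathsf{B}^p_N$ bounds $\bar{W}^{\xi}(0)$ uniformly. By Proposition \ref{asmp} together with Proposition \ref{invst}, $\bar{\mathcal{Q}}^{\xi}(t)\to c^{\ast}(\xi)\,\mathcal{B}\ast\mathcal{V}^e M\lambda$ uniformly in $\xi\in\mathsf{B}^p_N$, where $c^{\ast}(\xi)=\bar{W}^{\xi}(0)/e(\tfrac12 M^{(2)}+MP'QM)\lambda$. Applying $\langle 1,\cdot\rangle$ and using \eqref{glob_mathcal_Q} together with $\rho=eM\lambda=1$ yields $\bar{Z}^{\xi}(t)\to c^{\ast}(\xi)M\lambda$ and $e\cdot\bar{Z}^{\xi}(t)\to c^{\ast}(\xi)$ uniformly. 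In the non-degenerate regime $c^{\ast}(\xi)>0$ this gives uniform convergence $\dot{\bar{S}}^{\xi}(t)=\varphi(e\cdot\bar{Z}^{\xi}(t))\to 1/c^{\ast}(\xi)$; combining \eqref{eq:bar-arrivee-depart} with $\bar{D}(t)=\bar{Z}(0)+\bar{A}(t)-\bar{Z}(t)$ and the boundedness of $\bar{Z}$ yields $\bar{A}^{\xi}(t)-\lambda t$ bounded uniformly in $(t,\xi)$ and $\dot{\bar{A}}^{\xi}(t)\to\lambda$ in an averaged sense over long windows.

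With these asymptotics in hand I would analyse \eqref{eq:evol-barmu} pointwise in $x\ge 0$. The term $\xi(I(x+\bar{S}^{\xi}(t)))$ vanishes since $\bar{S}^{\xi}(t)$ grows linearly with a positive slope uniform in $\xi$. For the integral I split $[0,t]$ into $[0,t-T_0]$ and $[t-T_0,t]$. The old part is bounded by $\mathcal{V}(I(x+T_0/(2\bar{c})))\cdot\bar{A}(t-T_0)$, with $\bar{c}$ a uniform upper bound on $c^{\ast}(\xi)$ over $\mathsf{B}^p_N$; this is made small by first letting $t\to\infty$ and then $T_0\to\infty$, using $\langle\chi^2,\nu\rangle<\infty$ to guarantee integrable tails. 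On the recent window, uniform convergence of $\dot{\bar{S}}$ and $\dot{\bar{A}}$ justifies the change of variables $u=(t-s)/c^{\ast}(\xi)$, giving
\begin{align*}
\int_{t-T_0}^{t}\mathcal{V}(I(x+\bar{S}(s,t)))\,d\bar{A}(s)\longrightarrow c^{\ast}(\xi)\int_{0}^{T_0/c^{\ast}(\xi)}\mathcal{V}(I(x+u))\,du\cdot\lambda,
\end{align*}
which as $T_0\to\infty$ equals $c^{\ast}(\xi)M\Lambda\nu^e(I(x))$ by the definition of $\nu^e$. Hence $\bar{\mu}^{\xi}(t)(I(x))\to c^{\ast}(\xi)M\Lambda\nu^e(I(x))=(\Delta^{\nu}\bar{W}^{\xi}(0))(I(x))$ at every continuity point $x$, and since the limit has no atoms a standard argument converts this into convergence in the metric $\mathbf{d}$.

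The main obstacle will be maintaining uniformity in $\xi$ through the recent-window approximation, particularly for $\xi$ with very small $\bar{W}^{\xi}(0)$, where $1/c^{\ast}(\xi)$ may blow up. A two-scale argument should resolve this: for $c^{\ast}(\xi)\le\varepsilon$ one exploits smallness directly (the invariant target is $O(\varepsilon)$ in $\mathbf{d}$, and so is $\bar{\mu}^{\xi}(t)$ thanks to the uniform mass and first-moment bounds transferred from $\mathsf{B}^p_N$ via Propositions \ref{invst} and \ref{asmp}), while for $c^{\ast}(\xi)\ge\varepsilon$ the main argument yields uniform rates on a compact set of initial workloads; letting $\varepsilon\to 0$ at the end concludes.
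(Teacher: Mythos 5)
Your strategy shares the paper's two central inputs --- Proposition~\ref{asmp} for the uniform convergence of $\bar{\mathcal{Q}}^{\xi}(\cdot)$, and the identity $\langle 1,\bar{\mathcal{Q}}^{\xi}(t)\rangle = Q\bar{Z}^{\xi}(t)$ from~\eqref{glob_mathcal_Q} to extract $e\cdot\bar{Z}^{\xi}(t)\to c^{\ast}(\xi)$ --- but thereafter your route is genuinely different. The paper performs the time change $T(\cdot)=\bar{S}^{-1}(\cdot)$ and integrates by parts to rewrite $\bar{\mu}^{\xi}(T(t))(I(x))$ in the closed form~\eqref{barmu-eq-I-0} as a convolution of the bounded kernel $G^x$ against $Q\tilde{Z}$; uniform convergence of $Q\tilde{Z}(t)$ then immediately yields uniform convergence of the whole expression, without any window splitting. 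You instead work directly in the original time variable, splitting $[0,t]$ into an old block and a recent block and passing to the limit in each separately. Both routes must eventually deliver the same identity $\int_0^\infty \mathcal{V}(I(x+u))\,du\,\Lambda = M\Lambda\nu^e(I(x))$, but the paper's version avoids the two-scale case analysis on $c^{\ast}(\xi)$ entirely, because the time-changed integral equation is valid as written regardless of the size of $c^{\ast}(\xi)$.

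There is, however, a concrete error in your old-block estimate: you bound $\int_0^{t-T_0}\mathcal{V}(I(x+\bar{S}(s,t)))\,d\bar{A}(s)$ by $\mathcal{V}\bigl(I(x+T_0/(2\bar{c}))\bigr)\cdot\bar{A}(t-T_0)$, but $\bar{A}(t-T_0)\sim\lambda(t-T_0)$ grows without bound as $t\to\infty$, so taking $t\to\infty$ first makes this bound blow up rather than shrink. The remedy is to use integrability of the tail of $\mathcal{V}$ rather than the raw mass of $\bar{A}$: changing variables $u=\bar{S}(s,t)$ turns the old block into $\int_{\bar{S}(t-T_0,t)}^{\bar{S}(t)}\mathcal{V}(I(x+u))\,d\tilde{A}(u)$ where $\tilde{A}=\bar{A}\circ T$, whose Stieltjes density $\dot{\tilde{A}}(u)\approx\lambda\,e\cdot\tilde{Z}(u)$ is uniformly bounded, giving a bound of the form (const)$\cdot\int_{T_0/(2\bar{c})}^{\infty}\mathcal{V}(I(u))\,du\to 0$. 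But that change of variables is precisely the time change the paper uses systematically, and you would also need a uniform-in-$\xi$ upper bound on $\sup_{u\ge 0}e\cdot\bar{Z}^{\xi}(u)$ (not just for large $u$), which your sketch does not supply. So while your overall idea is sound and the recent-window computation does produce $c^{\ast}(\xi)M\Lambda\nu^e(I(x))$, making the old-block bound correct naturally pushes your argument back toward the paper's time-changed convolution form; as written, that step does not close.
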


\vspace*{0.1cm}

\begin{proposition} \label{asmp}
Fix $p,N>0$ and Assume $ \langle \chi^2, \nu \rangle < \infty$. For each initial state $\xi \in \mathsf{B}^p_N$ of the fluid model associated with critical data $(\alpha,\nu,P)$, we denote by $ \bar{\mathcal{Q}}^{\,\xi}(\cdot)$  the fluid solution of the equation \eqref{fluid-mathcal-Q} such that $\bar{\mathcal{Q}}^{\,\xi}(0)=\mathcal{B} \ast \xi$. We have
\begin{align}
\lim_{t \rightarrow \infty} \sup_{\xi \in \mathsf{B}^p_N} \mathbf{d} \left(  \bar{\mathcal{Q}}^{\,\xi}(t), \mathcal{B} \ast \Delta^{\nu} \bar{W}(0) \right) =0.
\label{eq:conv-gamma_lk}
\end{align}
\end{proposition}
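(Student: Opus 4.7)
The plan is to combine the single-class structure of $\bar{\gamma}(\cdot)$ (Remark \ref{rem:gamma}) with a change-of-variables argument inside the explicit fluid equation \eqref{fluid-mathcal-Q}. By Definition \ref{def:lift-map} the target $\mathcal{B}\ast\Delta^{\nu}\bar{W}(0)$ is just a scalar multiple of the fixed matrix-valued measure $\mathcal{B}\ast\mathcal{V}^{e}M\lambda$, so the task reduces to identifying the scalar prefactor produced by the long-time dynamics and checking that everything is uniform in $\xi\in\mathsf{B}^{p}_{N}$.

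First I would upgrade the pointwise convergence $\bar{\gamma}^{\,\xi}(t)\to\gamma^{*,\xi}=e(I-P')(\mathcal{B}\ast\Delta^{\nu})\bar{W}^{\,\xi}(0)$ from Remark \ref{rem:gamma} into a uniform statement on $\mathsf{B}^{p}_{N}$. Since $\bar{\gamma}^{\,\xi}$ satisfies the single-class critical fluid equation \eqref{eq:bar-gamma} with \emph{fixed} effective service law $\alpha\!\cdot\!\zeta/(e\!\cdot\!\alpha)$ and initial condition $\bar{\gamma}^{\,\xi}(0)=\bar{Z}^{\,\xi}(0)\cdot\zeta^{0,\xi}$, the Lyapunov-function analysis of \cite{puha2004invariant} applies, and uniformity is obtained by exploiting the built-in $(1+p)$-moment bound in the definition of $\mathsf{B}^{p}_{N}$ to bound the relevant Lyapunov terms independently of $\xi$. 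Taking total mass yields $\langle 1,\bar{\gamma}^{\,\xi}(t)\rangle\to c^{*}(\xi):=\bar{W}^{\,\xi}(0)/e(\tfrac{1}{2}M^{(2)}+MP'QM)\lambda$ uniformly; then since $\bar{S}^{\,\xi}(s,t)=\int_{s}^{t}\varphi(\langle 1,\bar{\gamma}^{\,\xi}(u)\rangle)\,du$, one gets the uniform linear approximation $\bar{S}^{\,\xi}(s,t)=(t-s)/c^{*}(\xi)+o(1)$ for $s,t$ large, and in particular $\bar{S}^{\,\xi}(t)\to\infty$ uniformly. The degenerate regime $\bar{W}^{\,\xi}(0)\approx 0$ is handled separately using continuity of both sides of \eqref{eq:conv-gamma_lk} in $\bar{W}^{\,\xi}(0)$ (each vanishing at $\bar{W}^{\,\xi}(0)=0$).

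Next I would substitute into \eqref{fluid-mathcal-Q}. The initial-state contribution $\mathcal{B}\ast\mathcal{V}^{0,\xi}(I(x+\bar{S}^{\,\xi}(t)))\bar{Z}^{\,\xi}(0)$ tends to $\mathbf{0}$ uniformly: $\bar{Z}^{\,\xi}(0)$ is bounded by $N$ on $\mathsf{B}^{p}_{N}$, $\bar{S}^{\,\xi}(t)\to\infty$ uniformly, and the tail of $\mathcal{B}\ast\mathcal{V}^{0,\xi}$ is controlled uniformly via \eqref{eq:b-ast-nu5} together with the uniform moment bound on $\nu^{0,\xi}$ obtained from $\xi_{k}=\bar{Z}_{k}^{\,\xi}(0)\nu_{k}^{0,\xi}$ and the $(1+p)$-moment control in $\mathsf{B}^{p}_{N}$. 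For the integral term, split $[0,t]$ into $[0,T_{0}]\cup[T_{0},t]$; the first piece contributes at most $T_{0}|\alpha|\cdot|\mathcal{B}\ast\mathcal{V}(\mathbb{R}_{+})|$ (negligible after the long-time renormalization), and on $[T_{0},t]$ the change of variables $u=\bar{S}^{\,\xi}(s,t)$ together with the uniform approximation $ds\approx c^{*}(\xi)\,du$ yields
\begin{equation*}
\int_{T_{0}}^{t}\mathcal{B}\ast\mathcal{V}\bigl(I(x+\bar{S}^{\,\xi}(s,t))\bigr)\,ds\,\alpha
\ \longrightarrow\ c^{*}(\xi)\int_{0}^{\infty}\mathcal{B}\ast\mathcal{V}\bigl(I(x+u)\bigr)\,du\,\alpha,
\end{equation*}
uniformly in $x\geq 0$ and $\xi\in\mathsf{B}^{p}_{N}$. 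A direct algebraic manipulation using Fubini, the identity $Q-\mathcal{B}\ast B^{e}(x)=(\mathcal{B}\ast\mathcal{V}^{e})(I(x))$, and the relation $\lambda=Q\alpha$ (in the spirit of the computations appearing after \eqref{int-compu1}) rewrites the right-hand side as $c^{*}(\xi)\,(\mathcal{B}\ast\mathcal{V}^{e})(I(x))M\lambda$, and substituting the formula for $c^{*}(\xi)$ recovers exactly $(\mathcal{B}\ast\Delta^{\nu}\bar{W}^{\,\xi}(0))(I(x))$.

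The main obstacle is Step 1 in the required uniform form: \cite{puha2004invariant} is stated for a single fixed initial condition, so upgrading it to uniformity on $\mathsf{B}^{p}_{N}$ needs either a careful re-examination of their Lyapunov-function proof (to show that the decay rate depends only on the bounds $N,p$ defining $\mathsf{B}^{p}_{N}$) or a compactness-plus-continuity argument based on continuity of the single-class fluid-solution map in its initial data on the weak topology. A secondary technical point is the uniform tail control of $\nu^{0,\xi}$, needed both for the vanishing of the initial-state term and for justifying the change of variables; this follows from the $(1+p)$-moment bound built into $\mathsf{B}^{p}_{N}$ but requires care because the map $\xi\mapsto\nu^{0,\xi}$ is not linear.
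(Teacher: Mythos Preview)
Your overall plan --- vanishing of the initial-state piece, change of variables in the integral term, identification of the scalar prefactor --- is on the right track, but the execution differs substantially from the paper's, and the gap you yourself flag in Step~1 is one the paper \emph{sidesteps} entirely rather than fills.

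The paper does not first prove uniform convergence of $\bar{\gamma}^{\,\xi}$ (or of $\langle 1,\bar{\gamma}^{\,\xi}\rangle$) and then approximate $ds\approx c^{*}(\xi)\,du$. Instead it performs the change of variable $u=\bar{S}^{\,\xi}(s)$ \emph{exactly}: with $T=(\bar{S})^{-1}$ one has $ds=\dot{T}(u)\,du$, so the integral term in \eqref{fluid-mathcal-Q} becomes $(K^{x}\ast T)(\bar{S}(t))$ where $K^{x}(y)=\mathcal{B}\ast\mathcal{V}(I(x+y))\,\alpha$. The crucial observation (Section~\ref{backg}) is that $T$ solves a renewal-type convolution equation whose kernel $B^{e}_{s}$ depends only on $(\alpha,\nu,P)$ and \emph{not} on $\xi$; explicitly $T=H^{\xi}\ast U$ with $U=\sum_{n\geq 0}(B^{e}_{s})^{*n}$ the fixed renewal function and only $H^{\xi}$ carrying the $\xi$-dependence. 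The integral term is therefore $(K^{x}\ast H^{\xi}\ast U)(\bar{S}(t))$, and Lemma~\ref{ver-unif-cond-lm2} shows this converges to $\bar{W}^{\,\xi}(0)\,\langle\chi,(\alpha\!\cdot\!\zeta)^{e}\rangle^{-1}\,\mathcal{B}\ast\mathcal{V}^{e}(I(x))M\lambda$ uniformly in $x\geq 0$ and $\xi\in\mathsf{B}^{p}_{N}$, by dominating $K^{x}\ast H^{\xi}$ by an integrable envelope and invoking a uniform key renewal theorem (Lindvall, Theorem~6.12).

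The upshot is that the convergence $\langle 1,\bar{\gamma}^{\,\xi}(t)\rangle\to c^{*}(\xi)$ which you take as \emph{input} is, in the paper's argument, essentially an \emph{output} of the renewal analysis (recall $\dot{T}(u)=e\cdot\bar{Z}(T(u))$). Your route is not wrong in principle, but to close it you would need either a uniform-over-initial-conditions version of the single-class Puha--Williams convergence, or precisely the renewal machinery the paper uses --- and the latter is more direct here because the $\xi$-dependence is isolated in the single factor $H^{\xi}$, whose bounds on $\mathsf{B}^{p}_{N}$ are immediate from \eqref{H-fct-def} and the definition of $\mathsf{B}^{p}_{N}$. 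In short: replace your approximate Jacobian $c^{*}(\xi)$ by the exact renewal representation $T=H^{\xi}\ast U$, and your Step~1 disappears as a separate obligation.
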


\subsection{Background} \label{backg}~

To prove the results, we need to recall some definitions and results from \cite{tahar2012fluid}.
First, by  Lemma 4.1 in that paper, the cumulative function $\bar{S}(t)$ defined in \eqref{eq:barS}  
 is continuous, strictly increasing and differentiable on $[0,t^{\ast})$ where $t^*:= \inf \{ t: e \cdot \bar{\mu}(t)=0 \}$ represent the first time at which the fluid queue empties. Moreover, we have 
 $\lim_{t \rightarrow t^{*}} \bar{S}(t) =+\infty$. Later, in that paper, it was shown that for the critical case, $t^*= + \infty$. 
 We consider the function $T:[0,+\infty) \mapsto [0,+\infty)$ defined as
$$ 
T(u):=\bar{S}^{-1}(u):= \inf \{ t \geq 0:\bar{S}(t) > u \}. ~~~ 
$$
This function is also continuous and differentiable  and strictly increasing, such that
 $\dot{T}(u)= e.\bar{Z}(T(u))$.  In \cite{tahar2012fluid}, they obtain a differential equation, of the renewal type, of which the function $T$ is a solution
\begin{align}
\dot{T}(u)=e(I-P')(\mathcal{B} \ast C)(u) \bar{Z}(0) + (K \ast T)(u), \label{diff_eq_T}
\end{align}
with 
\begin{align}
K(u)=e(I-P')(\mathcal{B} \ast (I-B))(u) \lambda , ~~ C(u)=(I-B^0(u)) \bar{Z}(0)+ (I-B(t)) QP'.
\label{K_C:def}
\end{align}
We rewrite some expression:
$$
(\mathcal{B} \ast C)(t)= Q - (\mathcal{B} \ast B^0) (t) = \mathcal{B} \ast \mathcal{V}^0 (I(t)), ~~ \text{and} ~ (\mathcal{B} \ast (I-B))(t) Q = \mathcal{B} \ast \mathcal{V}(I(t)).
$$
The differential equation \eqref{diff_eq_T} is equivalent to say that the function $T$ satifies the following convolution equation 
\begin{equation} \label{conv_eq_T}
T(u)= H^{\xi}(u) + (B^e_s \ast T)(u),
\end{equation}
where 
$$
B_e^s(t)= \int_0^t K(u)du= e (I-P')(\mathcal{B} \ast \mathcal{V}^e)([0,t]) M \lambda,
$$
and 
$$ H^{\xi}(u) =  \int_0^u e(I-P')(\mathcal{B} \ast C)(s) \bar{Z}(0)ds = \int_0^u e(I-P') \mathcal{B} \ast \mathcal{V}^0 (I(s)) \bar{Z}(0)ds .
$$
The notation $\xi$ in $H^{\xi}$ is because the function $H^{\xi}$ depends on the initial state $\xi$ as follows:
\begin{equation} \label{H-fct-def}
H^{\xi}(t)= e (I-P') \int_{0}^t \mathcal{B} \ast \xi (I(y))dy.~~~\text{for all}~~~t\in \mathbb{R}_+.
\end{equation}
Define the renewal function
\begin{equation}
U(t)=\sum_{n=0}^{\infty}(B^e_s)^{\ast n}(t)~~~\text{for all}~~t\geq0,\label{rene:funct}
\end{equation} 
where $(B_e^s)^{*0}(\cdot) \equiv 1$ and $(B_e^s)^{*i}(\cdot) = \left( (B_e^s)^{*i-1} \ast B_e^s \right) (\cdot)$ for each $ i \in \{1,2,3, \cdots \}$. 
Then, the convolution function \eqref{conv_eq_T}, has a unique locally bounded solution given by
\begin{equation}
\label{eq:inv-cum-service} 
T(t)= (H^{\xi}\ast U)(t).
\end{equation}
Define the following measures:
\begin{align*}
\zeta = e(I-P') (\mathcal{B} \ast \mathcal{V}), ~~~\text{ and } ~~ \zeta^0 = e(I-P') (\mathcal{B} \ast \mathcal{V}^0). 
\end{align*}

The measure $(\alpha.\zeta)^e$ is the excess life time measure of $(\alpha \cdot \zeta)$ (cf. (ii) in Lemma \ref{lemma:zeta-properties} ), and 	$B^e_s$ is
the  excess life time distribution. The distribution function $B^e_s$  has a finite mean (cf. (i) in Lemma \ref{lemma:zeta-properties}),
\begin{align} \label{wss3}
\int_0^{\infty} y B^e_s (dy) = \langle \chi, (\alpha.\zeta)^e \rangle = \dfrac{\langle \chi^2, \alpha.\zeta \rangle}{2 \langle \chi, \alpha.\zeta \rangle} =  e \left( \frac{1}{2} M^{(2)} + M P' Q M \right) \lambda <\infty. 
\end{align}


\begin{lemma} \label{lemma:zeta-properties} We have
\begin{itemize}
\item[(i)] $\langle \chi, \alpha.\zeta \rangle=1$ and 
$ \langle \chi^2, \alpha.\zeta \rangle = e \left(  M^{(2)} + 2 M P' Q M \right) \lambda$.
\item[(ii)] $\langle 1_{ [0,x] } , (\alpha . \zeta)^e \rangle = e (I- P' ) (\mathcal{B} \ast \mathcal{V}^e) ([0,x]) M \lambda$. 
\end{itemize}

\end{lemma}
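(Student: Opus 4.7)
For (i) the plan is to reduce everything to a direct application of the moment formulas \eqref{eq:b-ast-nu1}--\eqref{eq:b-ast-nu2} already established in Lemma \ref{lem:conv-zeta-r}. The row vector of measures $\zeta = e(I-P')(\mathcal{B}\ast\mathcal{V})$ contracted on the right against the column vector $\alpha$ yields $\langle \chi^p, \alpha.\zeta\rangle = \langle \chi^p, \zeta\rangle\,\alpha$. Using \eqref{eq:b-ast-nu1}, $\langle \chi,\zeta\rangle = e(I-P')QMQ = eMQ$, so $\langle \chi, \alpha.\zeta\rangle = eMQ\alpha = eM\lambda = \rho$, which equals $1$ by the critical-load assumption \eqref{eq:assum10}. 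The second moment follows identically from \eqref{eq:b-ast-nu2} (read with $M^{(2)}$ in place of the apparent typo $M^2$), giving $\langle \chi^2,\alpha.\zeta\rangle = e(I-P')Q(M^{(2)}+2MP'QM)Q\alpha = e(M^{(2)}+2MP'QM)\lambda$.

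For (ii), the plan is to reduce the claim to a pointwise algebraic identity that is verifiable directly from the Neumann series defining $\mathcal{B}$. Because $\langle \chi, \alpha.\zeta\rangle = 1$ by part~(i), the paper's definition of the excess-lifetime measure simplifies to
\[
\langle 1_{[0,x]}, (\alpha.\zeta)^e\rangle \;=\; \int_0^x (\alpha.\zeta)(I(y))\,dy \;=\; \int_0^x e(I-P')(\mathcal{B}\ast\mathcal{V})(I(y))\,\alpha\,dy.
\]
Since $(\mathcal{B}\ast\mathcal{V})_{ij}$ is a measure of total mass $Q_{ij}$ with distribution function $(\mathcal{B}\ast B)_{ij}$, one has $(\mathcal{B}\ast\mathcal{V})(I(y)) = Q - (\mathcal{B}\ast B)(y)$. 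The crux of the argument is then the pointwise identity
\[
(\mathcal{B}\ast\mathcal{V})(I(y))\,\alpha \;=\; (\mathcal{B}\ast(I-B))(y)\,\lambda,
\]
which I would derive from two ingredients: $\lambda - \alpha = P'\lambda$ (a direct consequence of $\alpha = (I-P')\lambda$), and $\mathcal{B}(y) - I = (\mathcal{B}\ast B)(y)\,P'$, obtained from $\mathcal{B} = \sum_{n\geq 0}(BP')^{\ast n}$ via $\mathcal{B} - I = \mathcal{B}\ast BP' = (\mathcal{B}\ast B)P'$ (the second equality because $P'$ is a constant matrix). Expanding the difference of the two sides using $(\mathcal{B}\ast(I-B))(y) = \mathcal{B}(y) - (\mathcal{B}\ast B)(y)$ and substituting these two relations makes every term telescope to zero.

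The final step is to integrate the pointwise identity over $[0,x]$ and apply \eqref{int-compu1}, namely $\int_0^x(\mathcal{B}\ast(I-B))(y)\,dy = (\mathcal{B}\ast B^e)(x)\,M$. Since $\mathcal{V}^e$ has density $M^{-1}(I-B)$ and $M$ is constant diagonal, one obtains $(\mathcal{B}\ast\mathcal{V}^e)([0,x]) = (\mathcal{B}\ast B^e)(x)$, and therefore
\[
\langle 1_{[0,x]},(\alpha.\zeta)^e\rangle \;=\; e(I-P')(\mathcal{B}\ast B^e)(x)\,M\lambda \;=\; e(I-P')(\mathcal{B}\ast\mathcal{V}^e)([0,x])\,M\lambda,
\]
which is the required equality. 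The main technical obstacle throughout is the non-commutativity of matrix convolution: one must track carefully on which side a constant matrix can be pulled through a convolution. Specifically, the cancellation $\mathcal{B}\ast BP' = (\mathcal{B}\ast B)P'$ exploits the diagonal structure of $B$ combined with the paper's convention $(F\ast G)_{ij} = \sum_k F_{ik}\ast G_{kj}$, and the analogous left-sided move $BP'\ast\mathcal{B}$ would not equal $(\mathcal{B}\ast B)P'$, so the order of multiplication and convolution must be respected throughout.
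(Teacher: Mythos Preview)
Your proof is correct and follows essentially the same route as the paper. For (i) the paper simply cites an external lemma from \cite{tahar2012fluid}, whereas you work it out directly from \eqref{eq:b-ast-nu1}--\eqref{eq:b-ast-nu2}; your observation about the $M^2$/$M^{(2)}$ typo in \eqref{eq:b-ast-nu2} is also accurate. For (ii) both arguments hinge on the same ingredient: the paper uses the matrix identity $(\mathcal{B}\ast\mathcal{V})(I(x)) = (\mathcal{B}\ast(I-B))(x)\,Q$ (stated without proof) and then hits it with $\alpha$ at the end, while you establish only the contracted version $(\mathcal{B}\ast\mathcal{V})(I(y))\,\alpha = (\mathcal{B}\ast(I-B))(y)\,\lambda$ directly via the telescoping based on $\mathcal{B}-I=(\mathcal{B}\ast B)P'$ and $\lambda-\alpha=P'\lambda$; after that both proofs integrate using \eqref{int-compu1} in the same way. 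One small correction: the criticality $\rho=1$ you invoke is the standing assumption of Section~\ref{sect:fluid-mod}, not the heavy-traffic limit \eqref{eq:assum10}.
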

\begin{proof}
(i) follows directly from Lemma C.1  \cite{tahar2012fluid}.
Let us prove (ii). We have
\begin{align*}
 \int_{0}^{y}(\mathcal{B}\ast(I-B))(s) \, ds & = \int_{0}^{y} \int_{0}^{s}
 \dot{\mathcal{B}}(u)(I-B)(s-u)\,du \, ds 
 = \int_{0}^{y}\int_{u}^{y}\dot{\mathcal{B}}(u)(I-B)(s-u) \, ds \, du  \\[0.2cm]
 &= \int_{0}^{y}\dot{\mathcal{B}}(u)\left(\int_{u}^{y}(I-B)(s-u) \, ds \right) du 
 = \int_{0}^{y}\dot{\mathcal{B}}(u)B^{e}(y-u)\,du ~M  \\[0.2cm] 
 &=  \mathcal{B}\ast B^{e}(y)~M.
\end{align*}
Since $ \mathcal{B}\ast \mathcal{V} (I(x))= (\mathcal{B}\ast(I-B))(x)Q$, then
\begin{align}
\int_{0}^{y} \mathcal{B}\ast \mathcal{V} (I(s)) \, ds = \mathcal{B}\ast \mathcal{V}^{e}([0,y])~MQ. \label{int-compu1}
\end{align}

We have
\begin{align*}
(\alpha \cdot \zeta)^e ([0,x]) &= \dfrac{1}{\langle \chi, \alpha\cdot \zeta \rangle} \, \int_0^x (\alpha.\zeta) (I(y)) dy = \int_0^x \zeta (I(y)) dy \, \alpha \nonumber  \\[0.2cm]
&=  \int_0^x \, e (I- P' ) (\mathcal{B} \ast \mathcal{V}) (I(y)) \, \alpha \, dy  = e (I- P' ) (\mathcal{B} \ast \mathcal{V}^e) ([0,x])  MQ \alpha \nonumber \\[0.2cm] 
& =  e (I- P' ) (\mathcal{B} \ast \mathcal{V}^e) ([0,x]) M \lambda.
\end{align*}
\end{proof}

For each $x \geq 0$, we define the function
\begin{align} \label{Kx-fct-def}
K^x (y)= (\mathcal{B} \ast\mathcal{V})(I(x+y)) \, \alpha.
\end{align}

The following Lemma is needed in the proof of Proposition \ref{asmp}. Recall that in this lemma, the integrals and the convolution are componentwise.
\begin{lemma} \label{ver-unif-cond-lm2}
Fix $p,N>0$ and let $\mathsf{B}^p_N$ be the set defined in \eqref{comp-set_unif}. Let $U$ the renewal function defined in \eqref{rene:funct}. Let $H^{\xi}$ and $K^x$ the functions defined in 
\eqref{H-fct-def} and \eqref{Kx-fct-def} respectively. Then,
\begin{align} 
\sup_{\xi \in \mathsf{B}^p_N} \sup_{x \geq 0}   \left| \big(\left(K^x\ast H^{\xi} \right) \ast U \big) (t) - \dfrac{\bar{W}(0)}{\langle \chi, (\alpha.\zeta)^e \rangle}  \, \mathcal{B}\ast \mathcal{V}^e (I(x)) M \lambda \right| \longrightarrow 0 ~~ \text{ as } t \rightarrow \infty. \label{Kx_H_U}
\end{align}
\end{lemma}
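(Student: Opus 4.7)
The approach is to apply the Key Renewal Theorem (KRT) and then promote the pointwise convergence to a uniform-in-parameter statement by a truncation argument. By Fubini's theorem,
\begin{equation*}
\big((K^x \ast H^\xi) \ast U\big)(t) = \int_0^t \dot H^\xi(y)\,(K^x \ast U)(t-y)\,dy,
\end{equation*}
where $\dot H^\xi(y) = e(I-P')(\mathcal{B}\ast\xi)(I(y))$. Two integrals identify the limiting constant. The integration identity from the proof of Lemma \ref{lemma:zeta-properties}(ii), together with $\langle 1,\mathcal{B}\ast\mathcal{V}^e\rangle = Q$ (a consequence of $\langle 1,\mathcal{B}\rangle = Q$) and $\lambda = Q\alpha$, yields $\int_0^\infty K^x(u)du = (\mathcal{B}\ast\mathcal{V}^e)(I(x))M\lambda$. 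Expanding $\mathcal{B} = I + BP'\ast\mathcal{B}$ produces the identity $(I-P')\langle\chi,\mathcal{B}\ast\xi\rangle = \langle\chi,\xi\rangle + MP'Q\langle 1,\xi\rangle$, from which (using $eMP'Q = eMQP'$, since $P'Q = Q-I$) one obtains $\int_0^\infty\dot H^\xi(y)dy = e\langle\chi,\xi\rangle + eMQP'\langle 1,\xi\rangle = \bar W(0)$.

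Since $K^x(\cdot)$ is nonnegative, decreasing and integrable (hence directly Riemann integrable), the KRT applied to the renewal measure $dU$ with interarrival mean $\mu = \langle\chi,(\alpha\cdot\zeta)^e\rangle$ gives, for each fixed $x\geq 0$, the pointwise limit $(K^x\ast U)(s)\to L(x) := \mu^{-1}(\mathcal{B}\ast\mathcal{V}^e)(I(x))M\lambda$ as $s\to\infty$. Substituting into the Fubini expression and combining with $\int_0^\infty\dot H^\xi(y)dy = \bar W(0)$ delivers the pointwise limit $\bar W(0)L(x)$, which is exactly the stated target.

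To upgrade to uniformity in $x\geq 0$ and $\xi\in\mathsf{B}^p_N$, I would split the integral at $t/2$. The tail piece $\int_{t/2}^t$ is bounded by $\big(\sup_s(K^0\ast U)(s)\big)\cdot\int_{t/2}^\infty\dot H^\xi(y)dy$ (using the componentwise domination $K^x\leq K^0$), and this second factor is controlled by the Markov-type estimate $\int_{t/2}^\infty\dot H^\xi(y)dy \leq (2/t)^p\,e(I-P')\langle\chi^{1+p},\mathcal{B}\ast\xi\rangle$, which vanishes uniformly on $\mathsf{B}^p_N$ provided the moment bound $\sup_{\xi\in\mathsf{B}^p_N}\langle\chi^{1+p},\mathcal{B}\ast\xi\rangle<\infty$; that bound follows by iterating $\mathcal{B}\ast\xi = \xi + BP'\ast\mathcal{B}\ast\xi$ with the binomial inequality, as in Lemma \ref{lem:conv-zeta-r}. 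For the principal piece $\int_0^{t/2}$ one needs $(K^x\ast U)(s)\to L(x)$ uniformly in $x\geq 0$ as $s\to\infty$; this uniform-in-$x$ KRT is the main obstacle, and rests on three ingredients: monotonicity of $K^x(y)$ in $y$ (allowing a common step-function discretization across $x$), the uniform directly Riemann integrable majorant $K^0$, and Blackwell's theorem in its locally uniform form $U(s+\tau)-U(s)\to\tau/\mu$ on compacts. Together these three allow the classical KRT argument to run with an approximation error depending only on $K^0$, hence independent of $x$.
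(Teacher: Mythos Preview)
Your approach is correct and reaches the same conclusion, but via a genuinely different decomposition from the paper's. The paper does \emph{not} Fubini out $H^\xi$; instead it keeps $f^x_\xi:=K^x\ast H^\xi$ intact, exhibits the pointwise majorant
\[
g^x_\xi(y)\;:=\;K^x(y/2)\,\bar W(0)\;+\;\dot H^\xi(y/2)\,QM\lambda
\]
(obtained by splitting the convolution at $y/2$), computes $\int_0^\infty f^x_\xi=\bar W(0)\,(\mathcal{B}\ast\mathcal{V}^e)(I(x))M\lambda$, and then invokes a single application of the uniform key renewal theorem (Lindvall, Thm.~6.12): for $g$ bounded, integrable, and vanishing at infinity, $\sup_{|f|\le g}|(f\ast U)(t)-\mu^{-1}\!\int f|\to 0$. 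Your route instead separates the two parameters: Fubini gives $\int_0^t\dot H^\xi(y)\,(K^x\ast U)(t-y)\,dy$; uniformity in $x$ then follows from the \emph{same} Lindvall theorem applied to the one-parameter family $\{K^x\}_{x\ge 0}$ with the parameter-free majorant $K^0$ (nonnegative, nonincreasing, integrable), and uniformity in $\xi$ is handled by the tail moment bound on $\dot H^\xi$. What your approach buys is a cleaner majorant (a fixed $K^0$ rather than one depending on both $x$ and $\xi$), at the cost of an explicit split at $t/2$ and the auxiliary moment estimate $\sup_{\xi\in\mathsf B^p_N}\langle\chi^{1+p},\mathcal B\ast\xi\rangle<\infty$; the paper's version is shorter but leaves more implicit in the hypotheses of Lindvall's theorem. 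Two small points to tidy in your write-up: (i) your Markov bound on $\int_{t/2}^\infty\dot H^\xi$ should be stated with absolute values (or via the entrywise nonnegative vector $(\mathcal B\ast\xi)(I(y))$), since $e(I-P')$ need not have all nonnegative entries; (ii) the moment bound $\sup_{\xi}\langle\chi^{1+p},\mathcal B\ast\xi\rangle<\infty$ indeed follows from the recursion argument of Lemma~\ref{lem:conv-zeta-r}, but note that the intermediate moments $\langle\chi^\theta,\xi\rangle$ for $0<\theta<1+p$ must be controlled by interpolation from the three moments that define $\mathsf B^p_N$.
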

\begin{proof}
Recall that from Equation \eqref{wss3}, we have $\langle \chi, (\alpha \cdot \zeta)^e \rangle < \infty $.
Since $K^x$ and $\dot{H}^{\xi}$ are nonincreasing, and $H^{\xi}$ is continuously differentiable, it follows that:
\begin{align}
\int_0^{\infty} \dot{H}^{\xi}(z)dz =  H^{\xi}(\infty) =   e (M^0+ MP'Q) \bar{Z}(0):= \bar{W}(0)<\infty ~~ \text{for all } \xi \in \mathsf{B}^q_N. \label{int:H_xi}
\end{align}
We will now prove inequality \eqref{Kx_H_U} for all $x\geq 0$ and $\xi \in \mathsf{B}^p_N$. We begin by bounding $(K^x \ast H^{\xi})(y)$:
\begin{align*}
(K^x \ast H^{\xi})(y) & \leq K^x (y/2) \int_0^{\infty} \dot{H}^{\xi}(u)du + \dot{H}^{\xi}(y/2) \int_{y/2}^y K^x(y-z) dz \\[0.2cm]
& \leq K^x (y/2) \, \bar{W}(0) + \dot{H}^{\xi}(y/2) \, \Big( (\mathcal{B}\ast \mathcal{V}^e) (I(x)) - (\mathcal{B}\ast \mathcal{V}^e) (I(x+y/2))
\Big) M \lambda.
\end{align*}
Since $ \langle 1, \mathcal{B} \ast \mathcal{V}^e \rangle= Q $, we have:
\begin{align}
(K^x \ast H^{\xi})(y) \leq K^x (y/2) \bar{W}(0) + \dot{H}^{\xi}(y/2) QM\lambda. \label{equiv1:K_H}
\end{align}
We have:
\begin{align*} 
\int_0^{\infty} (K^x \ast H^{\xi})(y) \, dy & =
\int_0^{\infty} \int_0^{y} K^x(y-z) dH^{\xi}(z) dy =
\int_0^{\infty} \int_0^{y}  K^x(y-z) \dot{H}^{\xi}(z) dz dy  \\[0.2cm]
& = \int_0^{\infty} \int_z^{\infty} K^x(y-z) dy \dot{H}^{\xi}(z) dz  = \int_0^{\infty} \int_0^{\infty} K^x(y) dy \dot{H}^{\xi}(z) dz \\[0.2cm]
& = 
\int_0^{\infty}  \dot{H}^{\xi}(z) dz \int_0^{\infty} K^x(y)dy. 
\end{align*} 
By \eqref{int-compu1}, we have:
\begin{align} 
\int_0^{\infty} K^x(y)dy =\int_x^{\infty} (\mathcal{B}\ast \mathcal{V}) (I(y)) \alpha dy =  (\mathcal{B}\ast \mathcal{V}^e)(I(x)) M \lambda <\infty ~~\text{ for all } x \geq 0. \label{K^x_int}
\end{align}
Therefore, \eqref{int:H_xi} with \eqref{K^x_int} imply that
\begin{align} 
\int_0^{\infty} (K^x \ast H^{\xi})(y) \, dy =  \bar{W}(0) \, (\mathcal{B}\ast \mathcal{V}^e)(I(x)) M \lambda <\infty. \label{equiv2:K_H}
\end{align}
Let us denote the functions 
\begin{align*}
f^x_{\xi}(y):= (K^x \ast H^{\xi})(y) ~~ \text{ and}  ~~ g^x_{\xi}(y):= K^x (y/2) \bar{W}(0) + \dot{H}^{\xi}(y/2) QM\lambda.
\end{align*}
By \eqref{equiv1:K_H} and \eqref{equiv2:K_H}, proving inequality \eqref{Kx_H_U} is equivalent to showing that:
\begin{align}
\lim_{t \rightarrow \infty} \, \sup_{|f^x_{\xi}| \leq g^x_{\xi}} \left| (f^x_{\xi} \ast U) (t) - \dfrac{1}{\langle \chi, (\alpha.\zeta)^e \rangle} \int_0^{\infty} f^x_{\xi}(s)ds \right| = 0 ~~~ \text{ for all } \xi \in \mathsf{B}^p_N ~ \text{ and } x \geq 0.
\end{align}
To prove this convergence, we will utilize Theorem 6.12 from \cite{lindvall2002lectures}. Based on this theorem, it suffices to show that for all $x\geq 0$ and $\xi \in \mathsf{B}^p_N$: (i) $g^x_{\xi}$ is bounded, (ii) $g^x_{\xi}$ is integrable, and (iii) $g^x_{\xi}(y)$ goes to $0$ as $y$ goes to infinity.
First, since $g^x_{\xi}$ is nonincreasing, we have for all $y \geq 0$:
\begin{align*}
g^x_{\xi}(y) & \leq g^x_{\xi}(0)=
(\mathcal{B} \ast \mathcal{V})(I(x)) \, \alpha \bar{W}(0) + e(I-P') \langle 1, \mathcal{B} \ast \xi \rangle QM \lambda \\
& \leq \lambda \bar{W}(0) + e(I-P') \langle 1, \mathcal{B} \ast \xi \rangle QM \lambda.
\end{align*}
The right-hand side of the preceding inequality is finite. Therefore, $g^x_{\xi}$ is bounded.
Secondly, we have for all  $\xi \in \mathsf{B}^p_N$ and  $x \geq 0$,
\begin{align*}
\int_0^{\infty} g^x_{\xi} (y) dy =2 \left( \int_0^{\infty} K^x(y) dy \, \bar{W}(0) +  \int_0^{\infty} \dot{H}^{\xi}(y)dy \, QM \lambda \right)<\infty.
\end{align*}
Therefore, $g^x_{\xi}$ is integrable for all $x \geq 0$
and $\xi \in \mathsf{B}^p_N$.
Finally, it is obvious that $g^x_{\xi}(y)$ goes to $0$ as $y$ goes to infinity.
\end{proof}


\subsection{Proof of Proposition \ref{asmp}}~ \label{pr_asmp}

The fluid limit equation \eqref{fluid-mathcal-Q} of the process $\mathcal{Q}(t)$ can be written as 
\begin{equation} \label{eq:bar-gamma-lk-conv}
\bar{\mathcal{Q}}(t) \left( I(x) \right) = \mathcal{B}\ast\mathcal{V}^0 (I(x+\bar{S}(t))) \bar{Z}(0) +(K^x\ast
H^{\xi} \ast U)(\bar{S}(t)).
\end{equation}
To see this, we start from \eqref{fluid-mathcal-Q}:
\begin{align*}
 \bar{\mathcal{Q}}(t)(I(x)) = \mathcal{B}\ast\mathcal{V}^0 (I(x+\bar{S}(t))) \bar{Z}(0)+\int_{0}^{t} 
K^x(\bar{S}(t)-\bar{S}(s) )ds.
\end{align*}
Applying the change of variable $y=\bar{S}(s)$ and recalling that $T(\cdot) = \bar{S}^{-1}(\cdot)$, we obtain
\begin{align*}
\bar{\mathcal{Q}}(t)(I(x)) = \mathcal{B}\ast\mathcal{V}^0 (I(x+\bar{S}(t))) \bar{Z}(0)+ 
(K^x \ast T)(\bar{S}(t)).
\end{align*}
Thus, using \eqref{eq:inv-cum-service}, we arrive at \eqref{eq:bar-gamma-lk-conv}.
Since $\lim_{ t \rightarrow \infty} \bar{S} (t) = + \infty $, Lemma \ref{ver-unif-cond-lm2} implies
\begin{align*} \
\sup_{\xi \in \mathsf{B}^p_N} \sup_{x \geq 0}   \left| \big(\left(K^x\ast H^{\xi} \right) \ast U \big) \left(\bar{S} (t)\right) - \dfrac{\bar{W}(0)}{\langle \chi, (\alpha.\zeta)^e \rangle}  \, \mathcal{B}\ast \mathcal{V}^e (I(x)) M \lambda \right| \longrightarrow 0 ~~ \text{ as } t \rightarrow \infty.
\end{align*}
Moreover,
$$
\sup_{\xi \in \mathsf{B}^p_N} \sup_{x \geq 0} \left|  \mathcal{B}\ast\mathcal{V}^0 (I(x+\bar{S}(t))) \bar{Z}(0) \right| \longrightarrow 0 ~~ \text{ as } t \rightarrow \infty.
$$
Therefore, by \eqref{wss3} and \eqref{eq:bar-gamma-lk-conv}, there exists $t_b \geq 0$ such that for all $t \geq T(t_b)$ and for all $\varepsilon >0$,
  \begin{align} 
\sup_{\xi \in \mathsf{B}^p_N} \sup_{x \geq 0}  \left| \bar{\mathcal{Q}}^{\,\xi}(t)(I(x))  - \mathcal{B} \ast \Delta^{\nu} (I(x))  \bar{W}(0) \right| \leq \varepsilon.
\end{align}
Finally, using \cite[Lemma C.1]{zhang2011diffusion}, we conclude that
\begin{align*}
\sup_{\xi \in \mathsf{B}^p_N} \mathbf{d} \left(  \bar{\mathcal{Q}}^{\,\xi}(t),\mathcal{B} \ast \Delta^{\nu} \bar{W}(0) \right) \longrightarrow 0 ~~ \text{ as } t \rightarrow \infty.
\end{align*} 

\subsection{Proof of Theorem \ref{pro:unif-conv-mu}} ~ \label{pr_convmu}

Let
$
\tilde{A}(t)= \bar{A}(T(t)), ~  \tilde{D}(t)= \bar{D}(T(t)), ~ \tilde{Z}(t)= \bar{Z}(T(t)).
$
Performing the change of variables and functions in \eqref{eq:bar-arrivee-depart}-\eqref{eq:evol-barmu} gives the new functional equations:
\begin{align*}
    \tilde{A}(t) &= \alpha T(t)+P^{\prime }\tilde{D}(t)
    \\[0.2cm]
    \tilde{Z}(t) &=\bar{Z}(0) +\tilde{A}
    (t)-\tilde{D}(t) \\[0.2cm]
    \bar{\mu}^{\xi}(T(t))(I(x)) &=
    (I-B^0)(x+t)) \bar{Z}(0)
    +\int_{0}^{t}  \left(I-B(x+t-s) \right)  \, d\tilde{A}(s).
  \end{align*}   
The two first above equations imply 
\begin{align*}
\tilde{A}(t)= \lambda T(t)+ QP'( \bar{Z}(0) - \tilde{Z}(t)).
\end{align*}
Making use of this equation and the fact that $\dot{T}(t)= e \cdot \tilde{Z}(t)$ and integrating by parts,
one deduces:
\begin{align}
\bar{\mu}^{\,\xi}(T(t))(I(x)) = C(t+x) \bar{Z}(0) - (I-B(x))P'Q \tilde{Z}(t) +\left( G^x \ast  Q\tilde{Z} \right)  (t),  \label{barmu-eq-I-0}
\end{align}
where $C(t)$ has been defined in \eqref{K_C:def} and $G^x (t)$ is the matrix defined as
\begin{align}
G^x (t) & := \int_x^{x+t} (I-B(u)) du \lambda e (I-P') + (B(x+t)-B(x))P'. \label{Gx_def}
\end{align}
By \eqref{glob_mathcal_Q} and  Proposition \ref{asmp},
\begin{align*}
\sup_{\xi \in \mathsf{B}^p_N} \left|  Q\tilde{Z} (t)   -  \dfrac{\bar{W}(0)}{e( \frac{1}{2}  M^{(2)}+MP^{'} QM)  \lambda}  \, Q M \lambda  \right| \longrightarrow 0 ~~ \text{ as } t \rightarrow \infty. 
\end{align*}
Thus, 
\begin{align*}
\sup_{\xi \in \mathsf{B}^p_N} \left|  \left( G^x \ast  Q \tilde{Z} \right)  (t)  -   G^x (\infty) \,  \dfrac{\bar{W}(0)}{e( \frac{1}{2}  M^{(2)}+MP^{'} QM)  \lambda}  \, Q M \lambda \right| \longrightarrow 0 ~~ \text{ as } t \rightarrow \infty,
\end{align*}
with
\begin{align*}
G^x(\infty) = M \Lambda \nu^e (I(x)) \, e (I-P') + (I-B(x))P'.
\end{align*}
One obtains for all $x \geq 0$,
\begin{align}
\lim_{t \rightarrow \infty} \, \sup_{\xi \in \mathsf{B}^p_N} \left|  \left( G^x \ast  Q \tilde{Z} \right)  (t)   -  \dfrac{\bar{W}(0)}{e( \frac{1}{2}  M^{(2)}+MP^{'} QM)  \lambda}  \, M \Lambda \nu^e (I(x)) -  (I-B(x))P'Q \tilde{Z}(\infty)  \right| =0. \label{hx-conv-ug}
\end{align}
On the other hand, for all $x \geq 0$ we have
\begin{align*}
\sup_{\xi \in \mathsf{B}^p_N} \, \sup_{x \geq 0} \, \left| C(t+x) \bar{Z}(0) \right| \longrightarrow 0 ~~ \text{ as } t \rightarrow \infty.
\end{align*}
Then by \eqref{barmu-eq-I-0} and \eqref{hx-conv-ug}, there exists $t_a \geq 0$ such that for all $t \geq T(t_a)$ and for all $\varepsilon >0$
 \begin{align*}
 \sup_{\xi \in \mathsf{B}^p_N} \, \sup_{x \geq 0} \, \left|  \bar{\mu}^{\,\xi}(t)(I(x))  -  \Delta^{\nu} (I(x)) \bar{W}(0) \right| \leq \varepsilon.
 \end{align*}
Thus, 
\begin{align*}
\sup_{\xi \in \mathsf{B}^p_N} \mathbf{d} \left(  \bar{\mu}^{\,\xi}(t),\Delta^{\nu} \bar{W}(0) \right) \longrightarrow 0 ~~ \text{ as } t \rightarrow \infty.
\end{align*}

\section{Proof of Theorem \ref{ssc2}}
\label{Sec5Q}

To prove Theorem \ref{ssc2}, we use the framework of the shifted fluid scaled process introduced by \cite{bramson1998state} and \cite{williams1998diffusion}.
Define $\bar{\mathcal{Q}}^r (t) = \mathcal{Q}^r (r t) /r$. Since $\hat{\mathcal{Q}}^r(t) = \bar{\mathcal{Q}}^r(rt)$, then studying the diffusion limit on $[0,T]$ for a fixed $T>0$ is equivalent to study the fluid process on $[0,rT]$.
We cover the interval $[0,rT]$ by a set of overlapping intervals $[m,m+L]$ where $m=0, \cdots,\lfloor rT \rfloor$ and $L>1$.
For any $t \in [0,T]$, there exists an $m\in \{0,\ldots,\lfloor rT\rfloor\}$ and an $s\in [0,L]$ such that $r^2 t = r(m+s)$ and
\begin{equation}
\label{eq:ssh-fluid-appro}
\hat{\mathcal{Q}}^r(t) = \bar{\mathcal{Q}}^{r,m}(s),
\end{equation} 
where $\bar{\mathcal{Q}}^{r,m}(\cdot):=\bar{\mathcal{Q}}^{r} (m+\cdot)$ represent the shifted fluid scaled version of the process $\mathcal{Q}^r(\cdot)$. 
The proof of Theorem \ref{ssc2} entails two main steps.
The first step is to establish the precompactness of the sequence of shifted fluid-scaled processes $\{\bar{\mathcal{Q}}^{r,m}(\cdot);\, r>0, m=0,\ldots,\lfloor rT\rfloor \}$ (cf. Sect \ref{sec:precomp}).
The second step involves demonstrating that, for large $r$, there exists a set with high probability such that the family of shifted fluid-scaled processes $\{ \bar{\mathcal{Q}}^{r,m}(\cdot), m \leq rT \}$ evaluated at some sample path in this set, are uniformly approximated on $[0,L]$ by fluid model solutions to the equation \eqref{fluid-mathcal-Q} (cf. Sect \ref{sec:ssc}). 
In addition to the two steps mentioned above, a result on the uniform convergence of the fluid solution to the invariant state is required, as established in the previous section (see Proposition \ref{asmp}).

\subsection{Precompactness of the family of the shifted scaled versions of the process $\mathcal{Q}^r(\cdot)$}
\label{sec:precomp}

The aim of this section is to prove Theorem \ref{preco}. Following \cite[Theorem 3.6.3]{ethier1986markov}, we need to establish the compact containment result (Lemma \ref{cocn}) and the oscillation bound result (Lemma \ref{Os lm}).
In the first section, we present dynamic equations satisfied by the shifted fluid-scaled processes $\{\bar{\mathcal{Q}}^{r,m}(\cdot), r>0, m \leq \lfloor rT \rfloor  \}$.
In Section \ref{presub3}, we establish a uniform functional weak law of large numbers. This is crucial for the subsequent proofs and will be frequently employed alongside the dynamic equations.
Section \ref{presub4} contains two upper bound estimates, which lead to the compact containment result in Section \ref{presub5}.
Section \ref{presub6} presents the asymptotic regularity result, which is utilized in Section \ref{presub7} to prove the oscillation bound result. Finally, we demonstrate the precompactness result in Section \ref{presub8}.
\subsubsection{Dynamic equations} \label{presub2}

Fix $r,T>0, L>1$ and $m \leq \lfloor rT \rfloor$. For $t' \in [0,L]$, we
denote by $
 \bar{\gamma}^{r,m}(t')=\bar{\gamma}^r(m+t')$ and $
\bar{\mathcal{Q}}^{r,m}(t')=\bar{\mathcal{Q}}^{r}(m+t') 
$ the shifted fluid scaled processes of  $ \gamma^r(\cdot) $ and $\mathcal{Q}^r(\cdot)$.
 Let $g:\mathbb{R}_+\rightarrow\mathbb{R}$ be a Borel-measurable function. The dynamic equations \eqref{eq:mathcal-Q} can be written in a shifted fluid scaled version as
\begin{eqnarray}
 & &  \langle g,\bar{\mathcal{Q}}^{r,m}(t'+h)\rangle  =
 \langle (g \, 1_{(0,\infty)})(\cdot - \bar{S}^{r,m}(t',t'+h)),\bar{\mathcal{Q}}^{r,m}(t') \rangle 
 \nonumber \\
  & & +\frac{1}{r} \, \sum_{l=1}^K\sum_{i=r\bar{E}_{l}^{r,m}(t')+1}^{r\bar{E}
    _{l}^{r,m}(t'+h)} \langle g \,1_{(0,\infty)} \left( \cdot -  \bar{S}^{r,m}(U_{l}^{r}(i)r^{-1}-m,t'+h) \right),  \vartheta^{r,l}(i) \rangle.~~~~~~~~~
   \label{eq:shifted-dynamic-equation2}  
\end{eqnarray}


\subsubsection{Uniform Functional weak law of large numbers} \label{presub3}
Let $\theta$ be the constant given in conditions \eqref{eq:assum3},\eqref{eq:assum6} and \eqref{eq:assum8}; and let $q>0$ be a constant such that 
\begin{equation}
\label{const-q}
2q^2+6q<\theta.
\end{equation}
The following set of functions will be used below
\begin{align} 
\mathcal{A}=\lbrace 1_{(x,\infty)} :  x \in \mathbb{R}_+  \rbrace \cup \lbrace \chi^{1+q},\chi^{2+2q} \rbrace . 
\label{class}
\end{align}

\begin{proposition}
\label{GLVLM}
Assume \eqref{eq:assum1}-\eqref{eq:assum10} and \eqref{eq:assum-init1}-\eqref{eq:assum-init4}. Fix $T>0,~L>1$, and let $\varepsilon, \eta > 0$. Then,
\begin{equation}
\liminf_{r \rightarrow \infty} \mathbb{P}^r \Bigg(\sup_{\scriptstyle{\substack{ 0\leq s\leq t\leq L+1 \\  m \leq \lfloor rT \rfloor \\ g \in \mathcal{A}} }} \Bigg\rVert\frac{1}{r}  \sum_{l=1}^K\sum_{i=r\bar{E}^{r,m}_{l}(s) + 1}^{r \bar{E}^{r,m}_{l}(t)} \langle g,\vartheta^{r,l}(i) \rangle
   -(t-s)\langle g, \mathcal{B}^r \ast \mathcal{V}^r \rangle \alpha \Bigg \lVert \leq \varepsilon  \Bigg ) \geq 1-\eta   
\label{glika}
\end{equation}
Denote $\Omega_{LN}^r$ the event in the above.
\end{proposition}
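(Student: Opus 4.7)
The plan is to decompose the sum into a mean (random-time-change) contribution and a centered martingale contribution, then handle the three levels of uniformity---over $(s,t) \in [0, L+1]$, over the discrete index $m \leq \lfloor rT\rfloor$, and over the function class $\mathcal{A}$---via weak convergence together with modulus-of-continuity arguments, and a Glivenko–Cantelli-type monotonicity reduction for $\mathcal{A}$.

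First I would exploit Lemma \ref{lem:pro-N-V-lk}(2), which gives that $\{\langle g, \vartheta^{r,l}(i)\rangle, i \geq 1\}$ is i.i.d.~with mean $\mu^{r,l}_g := \langle g, (\mathcal{B}^r \ast \mathcal{V}^r)_{\bullet l}\rangle$, and note via Lemma \ref{lem:conv-zeta-r} that $\mu^{r,l}_g \to \mu^{l}_g := \langle g, (\mathcal{B} \ast \mathcal{V})_{\bullet l}\rangle$ for each $g \in \mathcal{A}$. The moment assumptions \eqref{eq:assum6}--\eqref{eq:assum9} combined with the choice of $q$ in \eqref{const-q} (so that $(2+q)(2+2q) = 4 + 6q + 2q^2 < 4 + \theta$) guarantee, via Lemma \ref{lem:conv-zeta-r} applied to appropriate powers of $\chi$, a uniform-in-$r$ bound on the $(2+q)$-th moments of $\langle g, \vartheta^{r,l}(i)\rangle$ for every $g \in \mathcal{A}$ (using that $\langle 1_{(x,\infty)}, \vartheta^{r,l}(i)\rangle \leq \sum_k N^{r,l}_k(i)$, whose moments are dominated by geometric tails of the sub-stochastic matrix $P^r$).

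Next I would write the decomposition
\begin{align*}
\frac{1}{r}\!\!\sum_{i=r\bar{E}^{r,m}_l(s)+1}^{r\bar{E}^{r,m}_l(t)} \!\!\!\langle g,\vartheta^{r,l}(i)\rangle = \bigl[\bar{E}^r_l(m+t) - \bar{E}^r_l(m+s)\bigr]\mu^{r,l}_g + \bigl[\hat{S}^{r,l}_g(u^r_m(t)) - \hat{S}^{r,l}_g(u^r_m(s))\bigr],
\end{align*}
where $\hat{S}^{r,l}_g(u) := \frac{1}{r}\sum_{i=1}^{\lfloor r^2 u\rfloor}(\langle g, \vartheta^{r,l}(i)\rangle - \mu^{r,l}_g)$ and $u^r_m(s) := \bar{E}^r_l(m+s)/r$. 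For the mean part, the centered difference $\bar{E}^r_l(m+t) - \bar{E}^r_l(m+s) - \alpha^r_l(t-s)$ equals an increment of $\hat{E}^r_l$ over a window of length $(t-s)/r \leq (L+1)/r$; by \eqref{eq:cons-assum1} one has $\hat{E}^r_l \Rightarrow E^{\ast}_l$ with continuous paths, so tightness plus modulus-of-continuity forces this increment to be uniformly small in $m \leq \lfloor rT\rfloor$ and $s,t \in [0, L+1]$, and summing over $l$ produces the target drift $(t-s)\langle g, \mathcal{B}\ast\mathcal{V}\rangle\alpha$. For the martingale part, the uniform $(2+q)$-moment bound verifies the Lindeberg condition, so by the FCLT for triangular arrays $\hat{S}^{r,l}_g \Rightarrow B^{l}_g$, a continuous centered Brownian motion on $\mathbf{D}([0, T']$) for any fixed $T' > 0$; since $u^r_m(s), u^r_m(t)$ stay in a compact set and differ by $O(1/r)$, the same modulus-of-continuity argument yields uniform smallness.

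Finally, for the uniformity over $g \in \mathcal{A}$, the two power functions $\chi^{1+q}, \chi^{2+2q}$ are fixed and covered by the previous step; for the parametric family $\{1_{(x,\infty)}, x \in \mathbb{R}_+\}$ I would use the joint monotonicity in $x$ of both $\langle 1_{(x,\infty)}, \vartheta^{r,l}(i)\rangle$ (deterministically nonincreasing in $x$) and of the limit $x \mapsto \langle 1_{(x,\infty)}, \mathcal{B} \ast \mathcal{V}\rangle \alpha$, which is continuous since $\nu$ (and hence $\mathcal{B}\ast\mathcal{V}$) does not charge atoms: pick a finite grid $0 = x_0 < x_1 < \cdots < x_N$, $x_{N+1} = \infty$, with limit increments across grid points bounded by $\varepsilon$, then sandwich $\sup_{x \geq 0}$ by $\max_{j \leq N}$ in the standard way. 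The main obstacle is to carry the martingale control \emph{simultaneously} uniformly in $m$ (a discrete index with $O(r)$ values), $(s,t)$ and $g$: this is precisely why one needs moments of order strictly larger than $2$ on the summands, enabling a Burkholder-type maximal inequality combined with a block decomposition of the range $[0, O(r^2)]$ into $O(r)$ blocks of size $O(r)$, which yields a pointwise rate of convergence that survives the union bound over $m$, where Doob's $L^2$ inequality alone would fail.
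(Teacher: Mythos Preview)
Your approach is correct and genuinely different from the paper's. You and the paper agree on the mean part: both reduce $\bar E^{r,m}_l(t)-\bar E^{r,m}_l(s)-\alpha_l(t-s)$ to an increment of $\widehat E^r_l$ over a window of length $O(1/r)$ inside a fixed compact time interval, and use the diffusion limit \eqref{eq:cons-assum1} (the paper packages this as Lemma~\ref{exell}). The divergence is in the centered part. You invoke an FCLT for $\widehat S^{r,l}_g$ and then exploit tightness in $\mathbf D$ to get a modulus-of-continuity bound that automatically controls \emph{all} increments over windows of length $O(1/r)$, hence all $m\le\lfloor rT\rfloor$ at once; uniformity in $g$ over the indicator family is then recovered by a monotone grid/sandwiching argument (transferring atomlessness from $\mathcal{B}\ast\mathcal{V}$ to $\mathcal{B}^r\ast\mathcal{V}^r$ via weak convergence). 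The paper instead expands $\langle g,\vartheta^{r,l}_k(i)\rangle=\sum_{m\ge1} g(\widetilde V^r_{lk}(i,m))$ by visit number, then for each fixed $m$ applies the Glivenko--Cantelli estimate of \cite[Lemma~D.1]{zhang2011diffusion}, which directly delivers uniformity in $g\in\mathcal{A}$ and in the starting index $\ell\le r^2 M_1$ under the envelope condition of Lemma~\ref{nurgs}; a dyadic allocation $\varepsilon/2^{m+1}K$, $\eta/2^{m+1}K^2$ then sums over $m$.

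What each buys: the paper's route is cleaner because the hard joint uniformity (in $g$, in the block index, and in $(s,t)$) is outsourced to a single off-the-shelf lemma, and the visit-number decomposition reduces everything to real-valued i.i.d.\ triangular arrays. Your route is more self-contained but requires keeping track of the grid and the weak-convergence transfer for the indicator family. One remark: your final paragraph overstates the role of the $(2+q)$-moment. Once you have the FCLT for $\widehat S^{r,l}_g$ (for which Lindeberg via the uniform $(2+q)$-moment suffices), tightness in $\mathbf D$ \emph{already} gives $\sup$-over-$m$ smallness of the increments, since the modulus $\mathbf w_{T'}(\widehat S^{r,l}_g,\delta)$ is a supremum over all short windows; no separate Burkholder/union-bound machinery over $O(r)$ blocks is needed.
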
 
\par The following Lemmas are needed for the proof of Proposition \ref{GLVLM}. The proof of Lemma \ref{exell} is omitted since it follows immediatly  from \eqref{eq:cons-assum1} and the definition of shifted scaled process $\bar{E}^{r,m} (\cdot)$. 
\begin{lemma} 
\label{nurgs}
Assume conditions \eqref{eq:assum5}, \eqref{eq:assum7} and \eqref{eq:assum9}. For each $l,k \in \mathcal{K}$, we have 
\begin{equation}
  \lim_{N \rightarrow \infty } \, \sup_{r \in \mathbb{R}^+} \; \sup_{g\in \mathcal{A}} \; \langle g^2 \, 1_{\lbrace g >N \rbrace}, \mathcal{B}_{kl}^r \ast \nu_l^r \rangle = 0.  
\label{envcond}
\end{equation}
\end{lemma}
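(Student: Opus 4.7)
The plan is to split the class $\mathcal{A}$ into two parts and handle each separately: the indicator functions $\{1_{(x,\infty)} : x \geq 0\}$ on the one hand, and the two power functions $\chi^{1+q}$ and $\chi^{2+2q}$ on the other. The indicator part will be essentially trivial, and the power part a Markov-type tail estimate combined with the moment bound from Lemma \ref{lem:conv-zeta-r}.

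First, for any indicator $g = 1_{(x,\infty)}$, $g$ takes values in $\{0,1\}$, so the set $\{g > N\}$ is empty for every $N \geq 1$. Hence $\langle g^2 \, 1_{\{g>N\}}, \mathcal{B}_{kl}^r \ast \nu_l^r\rangle = 0$ for all $N \geq 1$, uniformly in $x \geq 0$ and $r > 0$, and this part contributes nothing in the limit $N \to \infty$.

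Next, for $g \in \{\chi^{1+q}, \chi^{2+2q}\}$, the strategy is to dominate $g^2 \, 1_{\{g>N\}}$ by a vanishing multiple of $\chi^{4+\theta}$, and then invoke the uniform moment bound $\limsup_r \langle \chi^{4+\theta}, \mathcal{B}_{kl}^r \ast \nu_l^r\rangle < \infty$, which follows from assumption \eqref{eq:assum9} together with \eqref{eq:b-ast-nu5}. The crucial elementary fact is that $2q^2 + 6q < \theta$ implies $4q < \theta$, since $4q < 4q + 2q(q+1) = 2q^2 + 6q$. Therefore $4+4q < 4+\theta$, guaranteeing that even the worst case $g^2 = \chi^{4+4q}$ (arising from $g = \chi^{2+2q}$) has degree strictly below $4+\theta$. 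For $g = \chi^{2+2q}$, $\{g > N\}$ equals $(N^{1/(2+2q)}, \infty)$, and on this set the pointwise bound $y^{4+4q} \leq N^{-(\theta-4q)/(2+2q)} \, y^{4+\theta}$ yields
\begin{equation*}
\langle \chi^{4+4q} \, 1_{\{\chi^{2+2q} > N\}},\, \mathcal{B}_{kl}^r \ast \nu_l^r\rangle \;\leq\; N^{-(\theta-4q)/(2+2q)} \,\langle \chi^{4+\theta}, \mathcal{B}_{kl}^r \ast \nu_l^r\rangle,
\end{equation*}
which tends to $0$ as $N \to \infty$, uniformly in $r$. The argument for $g = \chi^{1+q}$ is completely analogous and easier, since $g^2 = \chi^{2+2q}$ and the exponent gap to $4+\theta$ is even larger.

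No significant obstacle is anticipated; this is a standard uniform-integrability calculation, with the only subtlety being the bookkeeping of exponents, where the choice of $q$ in \eqref{const-q} is tailored precisely to make the squared functions in $\mathcal{A}$ fit inside the available $(4+\theta)$-moment supplied by assumption \eqref{eq:assum9}.
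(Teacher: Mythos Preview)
Your proof is correct and follows essentially the same strategy as the paper: a Markov-type tail estimate against a higher moment that is uniformly bounded in $r$ via \eqref{eq:b-ast-nu5} and assumption \eqref{eq:assum9}. The execution differs in two minor respects. For the indicator part, the paper applies the uniform bound $g^2 1_{\{g>N\}} \leq N^{-q} g^{2+q}$ and then controls $\langle 1_{(x,\infty)}^{2+q}, \mathcal{B}_{kl}^r \ast \nu_l^r\rangle \leq \langle 1, \mathcal{B}_{kl}^r \ast \nu_l^r\rangle = Q^r_{kl}$, whereas you observe more directly that $\{1_{(x,\infty)} > N\}$ is empty for $N \geq 1$. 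For the power part, the paper interposes the intermediate exponent $g^{2+q}$, so that the worst case is $(\chi^{2+2q})^{2+q} = \chi^{4+2q^2+6q}$, which sits below $\chi^{4+\theta}$ precisely by the full strength of \eqref{const-q}; you instead bound $g^2 = \chi^{4+4q}$ directly against $\chi^{4+\theta}$, needing only the weaker consequence $4q < \theta$. Both routes are valid, and yours is marginally more economical.
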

\begin{proof} 
We have 
$
\langle g^2 1_{\lbrace g>N  \rbrace}, \mathcal{B}_{kl}^r \ast \nu_l^r \rangle   \leq \frac{1}{N^q} \langle g^{2+q},\mathcal{B}_{kl}^r \ast \nu_l^r \rangle.
$
Thus, to prove \eqref{envcond} it suffices to prove
\begin{equation}
\label{eq:limsup-BB}
\limsup_{r \rightarrow \infty} \; \sup_{g\in \mathcal{A}} \;  \langle g^{2+q}, \mathcal{B}^r_{kl} \ast \nu^r_l \rangle < \infty.
\end{equation}
By \eqref{eq:b-ast-nu0} and assumptions \eqref{eq:assum5} and \eqref{eq:assum7}, we have 
\begin{equation}
\langle 1,\mathcal{B}^r_{kl} \ast \nu^r_l \rangle= Q^r  \longrightarrow \langle 1,\mathcal{B}_{kl} \ast \nu_l \rangle = Q.  
\label{B*v:conv}
\end{equation} 
Therefore,
\begin{align*}
\limsup_{r \rightarrow \infty} \; \sup_{x\geq 0}  \; \langle 1^{2+q}_{(x,\infty)}, \mathcal{B}^r_{kl} \ast \nu^r_l \rangle \leq \limsup_{r \rightarrow \infty} \;  \langle 1, \mathcal{B}^r_{kl} \ast \nu^r_l \rangle < \infty. 
\label{gone}
\end{align*}
By assumption \eqref{eq:assum9} and Lemma \ref{lem:conv-zeta-r}, we obtain 
\begin{equation}
\limsup_{r \rightarrow \infty} \langle \chi^{4+\theta} , \mathcal{B}^r_{kl} \ast \nu^r_l  \rangle <\infty.   
\label{mom:4+theta}
\end{equation}   
Therefore, by definition of the constant $q$ in \eqref{const-q}, we have
\begin{equation*}
 \label{gtwo}
 \limsup_{r \rightarrow \infty} \langle \left(\chi^{1+q} \right)^{2+q} , \mathcal{B}^r_{kl} \ast \nu^r_l  \rangle <\infty ~~~\mbox{and}~~~\limsup_{r \rightarrow \infty}\langle \left(\chi^{2+2q}\right)^{2+q} , \mathcal{B}^r_{kl} \ast \nu^r_l \rangle <\infty.
\end{equation*}
\end{proof}
\begin{lemma} 
\label{exell}
Assume \eqref{eq:assum2}-\eqref{eq:assum4}. Fix $T>0,~ L >1$. For all $\varepsilon >0$, we have
\begin{equation*}
\label{vcv}
\lim_{r \rightarrow \infty} \mathbb{P}^r \Bigg( \sup_{\scriptstyle{ \substack{ 0\leq s\leq t\leq L+1 \\  m \leq \lfloor rT \rfloor} }} ~  \left \lVert (\bar{E}^{r,m} (t) - \bar{E}^{r,m}(s) ) - (t-s) \alpha \right \rVert \leq \varepsilon \Bigg) =1. 
\end{equation*} 
\end{lemma}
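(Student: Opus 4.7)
The plan is to deduce this uniform fluid law of large numbers directly from the diffusion-scale FCLT \eqref{eq:cons-assum1} by a change of time-scale. The first step is to rewrite the increment of $\bar{E}^{r,m}$ in terms of $\hat{E}^r$. For any admissible $s,t,m$, set $u=(m+s)/r$ and $v=(m+t)/r$, so that $u,v\in[0,T+(L+1)/r]$ and $v-u=(t-s)/r$. Using the identity $E^r(r^2\tau)=r\,\hat{E}^r(\tau)+\alpha^r r^2\tau$, a direct substitution gives
\begin{equation*}
\bar{E}^{r,m}(t)-\bar{E}^{r,m}(s)-(t-s)\alpha=\hat{E}^r(v)-\hat{E}^r(u)+(\alpha^r-\alpha)(t-s).
\end{equation*}

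The second summand is uniformly $o(1)$ as $r\to\infty$, since $|t-s|\leq L+1$ is bounded and $\alpha^r\to\alpha$ by \eqref{eq:assum4}. For the first summand, I would invoke \eqref{eq:cons-assum1}, which yields $\hat{E}^r\Rightarrow E^\ast$ in $\mathbf{D}([0,T+1],\mathbb{R}^K)$, together with the crucial fact that the limit $E^\ast$ is a continuous (driftless) Brownian motion. This gives $C$-tightness of $\{\hat{E}^r\}_{r>0}$: for every $\varepsilon,\eta>0$ there exists $\delta>0$ such that
\begin{equation*}
\limsup_{r\to\infty}\mathbb{P}^r\bigl(w_{T+1}(\hat{E}^r,\delta)>\varepsilon/2\bigr)<\eta,
\end{equation*}
where $w_L(\cdot,\delta)$ denotes the standard modulus of continuity on $[0,L]$. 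Since $v-u\leq(L+1)/r<\delta$ for all sufficiently large $r$ and both $u,v\in[0,T+1]$, the supremum of $|\hat{E}^r(v)-\hat{E}^r(u)|$ over all admissible triples $(s,t,m)$ is bounded above by $w_{T+1}(\hat{E}^r,\delta)$. Combining this with the deterministic control on $(\alpha^r-\alpha)(t-s)$, the stated probability tends to $1$.

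There is no real obstacle here: the argument is a soft consequence of having a diffusion-scale FCLT with a continuous Brownian limit. The only thing to verify is the bookkeeping between fluid and diffusion scales, and the key observation is that the fluid-scale shift $m\leq\lfloor rT\rfloor$ corresponds, in diffusion coordinates, to evaluating $\hat{E}^r$ at the time $m/r\leq T$, so all relevant evaluation points remain in the fixed compact interval $[0,T+1]$ as $r\to\infty$. This matches the excerpt's remark that the result ``follows immediately'' from \eqref{eq:cons-assum1} and the definition of $\bar{E}^{r,m}(\cdot)$.
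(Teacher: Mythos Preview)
Your proof is correct and is precisely the argument the paper has in mind: the paper omits the proof entirely, stating only that it ``follows immediately from \eqref{eq:cons-assum1} and the definition of shifted scaled process $\bar{E}^{r,m}(\cdot)$,'' and your derivation via the identity $\bar{E}^{r,m}(t)-\bar{E}^{r,m}(s)-(t-s)\alpha=\hat{E}^r(v)-\hat{E}^r(u)+(\alpha^r-\alpha)(t-s)$ together with $C$-tightness of $\{\hat{E}^r\}$ is exactly the intended unpacking of that remark.
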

Denote by $\Omega^r_E$ the event in the above set.
\begin{proof}[Proof of Proposition \ref{GLVLM}]
Following a basic arithmetic, we get
\begin{multline}
 \sup_{\scriptstyle{\substack{ 0\leq s\leq t\leq L+1 \\  m \leq \lfloor rT \rfloor \\ g \in \mathcal{A}} }}\Bigg\rVert\frac{1}{r} \sum_{l=1}^K\sum_{i=r\bar{E}^{r,m}_{l}(s) + 1}^{r \bar{E}^{r,m}_{l}(t)} \langle g,\vartheta^{r,l}(i) \rangle-(t-s)\langle g, \mathcal{B}^r \ast \mathcal{V}^r \rangle \alpha \Bigg \lVert 
\\
 \leq \sup_{\scriptstyle{\substack{ 0\leq s\leq t\leq L+1\\  m \leq \lfloor rT \rfloor \\ g \in \mathcal{A}} }}\Bigg\rVert\frac{1}{r} \sum_{l=1}^K\sum_{i=r\bar{E}^{r,m}_{l}(s) + 1}^{r \bar{E}^{r,m}_{l}(t)} \left(\langle g,\vartheta^{r,l}(i) \rangle-\langle g, \mathcal{B}^r_{\bullet l} \ast \mathcal{V}^r_l \rangle \right)\Bigg\lVert
 \nonumber\\
+\sup_{ g \in \mathcal{A}}\rVert \langle g, \mathcal{B}^r \ast \mathcal{V}^r \rangle\rVert 
\sup_{\scriptstyle{\substack{0\leq s\leq t\leq L+1\\  m \leq \lfloor rT \rfloor } }}\rVert \bar{E}^{r,m}(t)-\bar{E}^{r,m}(s)-(t-s)\alpha\lVert
 \end{multline}
Label the two terms on the right-hand side of the preceding inequality as $I^r$ and $II^r$. Let $\varepsilon>0$ and $\eta>0$, to prove \eqref{glika}, it suffices to prove that 
\begin{equation}
\label{eq:I-II}
 \liminf_{r\to\infty}\mathbb{P}^r(I^r<\varepsilon/2)\geq 1-\eta/2 ~~\text{ and }~~\liminf_{r\to\infty}\mathbb{P}^r(II^r<\varepsilon/2)\geq 1-\eta/2.
\end{equation}
Equation \eqref{eq:limsup-BB} implies that there exists a constant $C_1>0$ such that 
\begin{equation}
 \sup_{ g \in \mathcal{A}}\rVert \langle g, \mathcal{B}^r \ast \mathcal{V}^r \rangle\rVert \leq C_1~~\text{ for large } r.
\end{equation}
Given that
\begin{equation*}
 \mathbb{P}^r(II^r<\varepsilon/2)\leq \mathbb{P}^r\Bigg(\sup_{\scriptstyle{\substack{ 0\leq s\leq t\leq L+1\\  m \leq \lfloor rT \rfloor } }}\rVert \bar{E}^{r,m}(t)-\bar{E}^{r,m}(s)-(t-s)\alpha\lVert\geq \varepsilon/2C_1\Bigg),
\end{equation*}
and Lemma \ref{exell} implies that the term on the right-hand side of the preceding expression exceeds $1-\eta/2$. Therefore, the second inequality in \eqref{eq:I-II} holds. After a straightforward computation, the set $\left\{I^r\leq \varepsilon/2 \right \}$ contains
\begin{equation}
 \bigcap_{l,k\in\mathcal{K}}\Bigg\{\sup_{\scriptstyle{ \substack{ 0\leq s\leq t\leq L+1\\  m \leq \lfloor rT \rfloor \\ g \in \mathcal{A}} }} \Bigg \lvert \dfrac{1}{r}\sum_{i=r \bar{E}^{r,m}_l (s) + 1}^{r \bar{E}^{r,m}_l (t)} \left( \langle g,\vartheta_{lk}^r(i) \rangle-\langle g,\mathcal{B}_{kl}^r \ast \nu_l^r \rangle \right) \Bigg \rvert \leq \varepsilon/2K \Bigg\}
\end{equation}
Thus, to show the first inequality of \eqref{eq:I-II} it suffices to show that
for each $l,k \in \mathcal{K}$
\begin{equation}
\liminf_{r \rightarrow \infty} \mathbb{P}^r \Bigg(\sup_{\scriptstyle{ \substack{ 0\leq s\leq t\leq L+1\\  m \leq \lfloor rT \rfloor \\ g \in \mathcal{A}} }} \Bigg \lvert \dfrac{1}{r}\sum_{i=r \bar{E}^{r,m}_l (s) + 1}^{r \bar{E}^{r,m}_l (t)} \left( \langle g,\vartheta_{lk}^r(i) \rangle-\langle g,\mathcal{B}_{kl}^r \ast \nu_l^r \rangle \right) \Bigg \rvert \leq \frac{\varepsilon}{2K} \Bigg) >1-\frac{\eta}{2K^2}. \label{WLLN2} 
\end{equation}
Fix $M_1= 2 \| \alpha \| (T+1) $ and $L_1= 2  (L + 1) \| \alpha \|$.
For any $s\leq t$, denote $l^{\prime}= \bar{E}^{r,m}_l (t) - \bar{E}^{r,m}_l (s)$ and  $\ell= r \bar{E}^{r,m}_l (s)$. According to Lemma \ref{exell}, the conditions  $0\leq s\leq t\leq L+1$ and $m \leq \lfloor  rT \rfloor$ imply $0 <\ell< r^2 M_1$  and $0\leq l^{\prime}\leq L_1$ for sufficiently large $r$. 
Hence, demonstrating \eqref{WLLN2} requires proving
\begin{equation}
\liminf_{r \rightarrow \infty}\mathbb{P}^r \Bigg(\sup_{\scriptstyle{ \substack{ 0 <\ell< r^2 M_1  \\ 0\leq l^{\prime}\leq L_1 \\ g \in \mathcal{A}} }}
 \left \lvert \dfrac{1}{r} \sum_{i=\ell+1}^{\ell+ \lfloor rl^{\prime} \rfloor }\langle g, \vartheta_{lk}^r(i)\rangle - l^{\prime} \langle g,\mathcal{B}_{kl}^r \ast \nu_l^r \rangle \right \rvert \leq \frac{\varepsilon}{2K} \Bigg)>1-\frac{\eta}{2K^2}.
\label{zhgg}
\end{equation}
For each $n \geq 1$, denote by $\nu_{lk}^r(n)$ the law of $V^r_{lk}(1,n)$ conditioning on the event $\lbrace N_{k}^{r,l} (1) \geq n \rbrace$. Note that $\widetilde{\nu}_{kl}^r(m)=\nu_{kl}^r(m)\mathbb{P}(N_{k}^{r,l}(1)\geq m)$ is the law of $\widetilde{V}^r_{lk}(i,m)=V_{lk}^r(i,m)1_{\{N_{k}^{r,l}(i)\geq m\}}$.
By \eqref{es:mathca-X} and (ii) Lemma \ref{lem:pro-N-V-lk} we have for all $i \geq 1$, 
 \begin{equation}
  \langle g, \vartheta_{lk}^r(i)\rangle = \sum_{m=1}^{\infty}g ( \widetilde{V}_{lk}^r(i,m))~~\mbox{and} ~~
  \langle g,\mathcal{B}_{kl}^r \ast \nu_l^r \rangle=\sum_{m=1}^{\infty}\langle g, \widetilde{\nu}_{lk}^r(m)\rangle 
  \label{alter-X-g}
 \end{equation}
We replace $\langle g, \vartheta_{lk}^r(i)\rangle$ and $\langle g,\mathcal{B}_{kl}^r \ast \nu_l^r \rangle$ by their expressions of \eqref{alter-X-g} in \eqref{zhgg}. Then, it suffices to prove 
\begin{equation}
\label{funlw}
\limsup_{r \rightarrow \infty} \, \mathbb{P}^r \Bigg( \sup_{\scriptstyle{ \substack{ 0 <\ell< r^2 M_1  \\ 0\leq l^{\prime}\leq L_1 \\ g \in \mathcal{A}} }} \left \lvert \; \sum_{m=1}^{\infty} \, \Big(\dfrac{1}{r} \sum_{i=\ell+1}^{\ell+ \lfloor rl^{\prime} \rfloor } g \,( \widetilde{V}^r_{lk}(i,m)) - l^{\prime}\langle g,\widetilde{\nu}_{kl}^r(m) \rangle \Big) \right \rvert > \frac{\varepsilon}{2K}  \Bigg) < \frac{\eta}{2K^2}.
\end{equation}
Let $l,k\in\mathcal{K}$ and $m \geq 1$ be fixed, and denote $\bar{g}:= \sup \{g: g\in \mathcal{A}\}$. By Lemma \ref{nurgs}, we have 
$$\lim_{N \rightarrow \infty }\sup_{r \in \mathbb{R}^+}\langle \bar{g}^2 \, 1_{\lbrace g >N \rbrace}, \tilde{\nu}_{kl}^r(m) \rangle = 0.$$
 Consequently, the sequence $(\widetilde{V}^r_{lk}(i,m),i\geq 1)$ satisfies the conditions of the Glivenko-Cantelli estimate  in Lemma D.1 \cite{zhang2011diffusion}. Hence,
\begin{equation}
\limsup_{r \rightarrow \infty} \, \mathbb{P}^r \Bigg(  \sup_{\scriptstyle{ \substack{ 0 <\ell< r^2 M_1  \\ 0\leq l^{\prime}\leq L_1 \\ g \in \mathcal{A}} }} \left \lvert \dfrac{1}{r} \sum_{i=\ell+1}^{\ell+ \lfloor rl^{\prime} \rfloor } g \,( \widetilde{V}^r_{lk}(i,m)) - l^{\prime}\langle g,\widetilde{\nu}_{kl}^r(m) \rangle \right \rvert > \frac{\varepsilon}{2^{m+1}K} \Bigg) < \frac{\eta}{2^{m+1}K^2} 
\label{lemD1-zheng}
\end{equation}
Given that
\begin{multline*}
\mathbb{P}^r \Bigg(  \sup_{\scriptstyle{ \substack{ 0 <\ell< r^2 M_1  \\ 0\leq l^{\prime}\leq L_1 \\ g \in \mathcal{A}} }} \; \left \lvert \; \sum_{m=1}^{\infty} \, \Big(\dfrac{1}{r} \sum_{i=\ell+1}^{\ell+ \lfloor rl^{\prime} \rfloor } g \,( \widetilde{V}^r_{lk}(i,m)) - l^{\prime}\langle g,\widetilde{\nu}_{kl}^r(m) \rangle \Big) \right \rvert > \frac{\varepsilon}{2K} \Bigg) \\
\leq \sum_{m=1}^{\infty}\mathbb{P}^r \Bigg( \sup_{\scriptstyle{ \substack{ 0 <\ell< r^2 M_1  \\ 0\leq l^{\prime}\leq L_1 \\ g \in \mathcal{A}} }} \left \lvert \dfrac{1}{r} \sum_{i=\ell+1}^{\ell+ \lfloor rl^{\prime} \rfloor } g \,( \widetilde{V}_{lk}^r(i,m) ) - l^{\prime}\langle g,\widetilde{\nu}_{kl}^r(m) \rangle \right \rvert > \frac{\varepsilon}{2^{m+1}K} \Bigg),
\end{multline*}
as $r\rightarrow\infty$. Then, it follows from \eqref{lemD1-zheng} that the limit inferior of the term on the right-hand side of the above equality is bounded above by $\eta/2K^2$. Therefore, \eqref{funlw} holds.
\end{proof}

\subsubsection{Preliminary estimates} \label{presub4}~

In this section, we offer estimates necessary for assessing the tightness of $\bar{\mathcal{Q}}^{r,m}(\cdot)$. The subsequent two lemmas provide an upper bound for both the moment and the total mass of the shifted fluid scaled process $\bar{\mathcal{Q}}^{r,m} (\cdot)$.
\begin{lemma}
\label{tilde-M-T}
Assume \eqref{eq:assum1}-\eqref{eq:assum10} and \eqref{eq:assum-init1}-\eqref{eq:assum-init4}. Fix $T>0$ and $L>1$, and let $q$ be the constant defined in \eqref{const-q}. For each $\eta > 0$, there  
exists a constant $\widetilde{M}_{T,L}>0$ such that 
\begin{align}
\liminf_{r \rightarrow \infty} \mathbb{P}^r \left( \max_{m \leq \lfloor rT \rfloor}  \, \left\| \,
 \langle  \chi^{1+q}, \bar{\mathcal{Q}}^{r,m} (\cdot) \rangle \, \right\|_L \leq \widetilde{M}_{T,L} \right) \geq 1 - \eta 
 \label{1+q}
\end{align} 
\end{lemma}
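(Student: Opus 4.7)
The plan is to apply the shifted dynamic equation \eqref{eq:shifted-dynamic-equation2} with $g = \chi^{1+q}$, $t' = 0$, and $h = s \in [0, L]$, together with the pointwise inequality $\chi^{1+q}(x - a)\, 1_{(0,\infty)}(x - a) \leq \chi^{1+q}(x)$, which is valid for every $a \geq 0$ since $\chi^{1+q}$ is nondecreasing. This yields the crude majorization
\[
\sup_{s \in [0, L]} \langle \chi^{1+q}, \bar{\mathcal{Q}}^{r,m}(s) \rangle
\;\leq\; \langle \chi^{1+q}, \bar{\mathcal{Q}}^r(m) \rangle
+ \sum_{l=1}^{K} \frac{1}{r} \sum_{i = r \bar{E}^{r,m}_l(0)+1}^{r \bar{E}^{r,m}_l(L)} \langle \chi^{1+q}, \vartheta^{r,l}(i) \rangle.
\]

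On the event $\Omega^r_{LN}$ furnished by Proposition \ref{GLVLM}, the second (arrivals) term on the right-hand side is uniformly bounded in $m \leq \lfloor rT \rfloor$: since $\chi^{1+q} \in \mathcal{A}$ and the summands are nonnegative, applying the proposition with $s = 0$ and $t = L \leq L + 1$ gives the bound $K \bigl(L\,\|\langle \chi^{1+q}, \mathcal{B}^r \ast \mathcal{V}^r \rangle \alpha\| + \varepsilon\bigr)$, which by \eqref{eq:b-ast-nu5} of Lemma \ref{lem:conv-zeta-r} is finite and uniformly bounded in $r$.

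The main obstacle is thus controlling the initial term $\langle \chi^{1+q}, \bar{\mathcal{Q}}^r(m) \rangle$ uniformly over $m \leq \lfloor rT \rfloor$. Since $\bar{\mathcal{Q}}^r(m) = \hat{\mathcal{Q}}^r(m/r)$, this is effectively a diffusion-scale estimate, and a naive iteration of the rough bound above yields only an $O(m)$ estimate which is too loose. To overcome this, I would combine Proposition \ref{prop:gamma-global} with a decomposition of contributions by arrival epoch: the convergence $\hat{\gamma}^r \Rightarrow \gamma^*$ provides, together with the continuity of the limit, a uniform-in-probability bound on $\sup_{t \in [0, T + L]} \langle 1, \hat{\gamma}^r(t) \rangle = \sup_{t} e \cdot \hat{Z}^r(t)$, hence on the number of jobs present at any original time up to $r^2(T + L)$. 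Jobs arriving in the most recent fluid window of length $L + 1$ preceding $rm$ contribute an amount controlled via Proposition \ref{GLVLM} applied with the appropriate shift, while older arrivals have accumulated sufficient cumulative service to have either left the system or to have residual service positions bounded below; their contribution is handled via a Markov-type argument exploiting the $(4 + \theta)$-th moment bound \eqref{eq:b-ast-nu5} and the constraint \eqref{const-q} that $1 + q$ is strictly less than $4 + \theta$, which provides the polynomial decay needed to make the total $O(1)$ uniformly in $r$ and $m$.

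Combining this uniform bound on the initial term with the arrivals bound from the second paragraph produces a constant $\widetilde{M}_{T,L} > 0$ (depending on $T$, $L$, and the moment data but not on $r$) such that \eqref{1+q} holds on the intersection of the good events from Propositions \ref{GLVLM} and \ref{prop:gamma-global}; each of these events can be made to have probability at least $1 - \eta/2$ for all sufficiently large $r$, completing the proof.
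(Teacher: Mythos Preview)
Your proposal is correct and follows essentially the same route as the paper: start from the dynamic equation, control recent arrivals via Proposition~\ref{GLVLM}, and handle older arrivals by decomposing according to age, using the queue-size bound from $\hat\gamma^r$ (i.e.\ the constant $M_\gamma$ in \eqref{const:M1}) to lower-bound the cumulative service by $j/M_\gamma$ on the $j$-th unit fluid interval in the past. The paper's precise realization of your ``Markov-type argument'' is the pointwise inequality $(V-j/M_\gamma)^{1+q}_+\le (M_\gamma/j)^{1+q}V^{2+2q}$, which trades up to the $(2{+}2q)$-moment; since $\chi^{2+2q}\in\mathcal A$ (this is exactly what the constraint \eqref{const-q} buys), Proposition~\ref{GLVLM} controls each unit-interval contribution on the event $\Omega^r_{LN}$, and the resulting series $\sum_{j\ge 1} j^{-(1+q)}$ converges.
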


\begin{proof}
Let $t \in [0,L]$. By bounding the dynamic equation  \eqref{eq:shifted-dynamic-equation2} with $t'=0, h=t$ and $g=\chi^{1+q}$, we have 
\begin{align}
\left \lvert \langle \chi^{1+q}, \bar{\mathcal{Q}}^{r,m} (t) \rangle \right \rvert  \leq \left \lvert \langle \chi^{1+q},\bar{\mathcal{Q}}^{r} (0) \rangle \right \rvert 
 +  \left \lvert \dfrac{1}{r} \, \sum_{l=1}^K \, \sum_{i=1}^{r\bar{E}^{r}_l(m+t)} \langle \chi^{1+q} \left( \cdot -  \bar{S}^r (U^r_l (i)/r ,m+t ) \right),  \vartheta^{r,l}(i) \rangle \right \rvert.  \label{eq:dyn-moment}
\end{align}
By \eqref{bound:Bv0} and definition of the constant $q$ in \eqref{const-q}, we obtain ,
$$ \langle \chi^{1+q}, \mathcal{B}^r \ast \mathcal{V}^{0,r} \rangle \longrightarrow \langle \chi^{1+q}, \mathcal{B} \ast \mathcal{V}^0 \rangle < \infty.$$ 
Furthermore, by definition of $ \mathcal{Q}^r(0)$ in \eqref{eq:mathcal-Q-0}, and by applying Lemma D.5 from \cite{tahar2012fluid} with the replacements $V^r_{lk}(i,n)$ by $V^{0,r}_{lk}(i,n)$ and $\bar{E}^r(\cdot)$ by $\bar{Z}(0)$, we have
 \begin{equation}
 \langle \chi^{1+q}, \bar{\mathcal{Q}}^r(0) \rangle \Rightarrow \langle \chi^{1+q}, \mathcal{B} \ast \mathcal{V}^0  \rangle \bar{Z}(0)< \infty.
 \end{equation}
 Therefore, there exists a constant $M_{0,q}$, such that
\begin{equation}
\label{M-0-q}
\liminf_{r \to \infty} \mathbb{P}^r\left ( \lvert \langle \chi^{1+q} ,\bar{\mathcal{Q}}^{r} (0) \rangle  \rvert < M_{0,q} \right)\geq 1-\eta.
\end{equation}
Let $\Omega^r_{0,q}$ denote the event described in the preceding set. Under this condition, the first term on the right-hand side of \eqref{eq:dyn-moment} is bounded by $M_{0,q}$. To bound the second term on the right-hand side of \eqref{eq:dyn-moment}, we first examine the event $\Omega^r_{LN}$ defined in \eqref{glika}. Subsequently, we establish an event wherein the shifted fluid cumulative service process $\bar{S}^{r,m}(\cdot)$ is lower bounded. Indeed, by Proposition \ref{prop:gamma-global} and the shifted scaling property, there exists a constant $M_{\gamma}>0$ such that
\begin{equation}
\label{const:M1}
 \liminf_{r\to\infty}\mathbb{P}^r \left( \underset{ m \leq \lfloor rT \rfloor}{\max}  \, \left\| \, \langle 1, \bar{\gamma}^{r,m} (\cdot) \rangle \, \right\|_L \leq M_{\gamma} \right)\geq 1-\eta.
\end{equation}
Denote the set under the aforementioned limit by $\Omega^r_{\gamma}$. Referring to \eqref{glika}, \eqref{M-0-q}, and \eqref{const:M1}, we obtain
\begin{equation} \label{Mom-q-ev}
\liminf_{r\to\infty} \mathbb{P}^r( \Omega^r_{M,q} ) \geq 1-\eta, 
\end{equation}
where $\Omega^r_{M,q} = \Omega^r_{\gamma} \cap \Omega^r_{0,q} \cap \Omega^r_{LN}$.
Let $\omega$ be a fixed sample path in  $\Omega^r_{M,q} $. Throughout the remainder of the proof, all random variables are evaluated at this
$\omega$. 
To bound the second term on the right hand side of \eqref{eq:dyn-moment}, an issue arises. We cannot utilize the fact that we are on the event $\Omega^r_{LN}$ due to the summation ranging from $i=1$ to $i=r\bar{E}^{r}(m+t)$. It is worth noting that this can be expressed as the sum from $i=1+r\bar{E}(0)$ to $i=1+r\bar{E}^r(m+t)$. However, the problem lies in the upper bound of $m$, which is constrained by $\lfloor rT \rfloor$, and this value tends to infinity as $r$ approaches infinity.
To address this, we consider two cases:  $m \in \lbrace 0,1 \rbrace$ and $m \geq 2$. For the first case, the second term on the right hand side of \eqref{eq:dyn-moment} is bounded by 
\begin{align*}
\left \lvert \dfrac{1}{r} \, \sum_{l=1}^K \, \sum_{i=1+r\bar{E}^{r}_l(0)}^{r\bar{E}^{r}_l(m+t)} \langle \chi^{1+q} , \vartheta^{r,l} \rangle \right \rvert \leq 2 (L+1) \,  \left \lvert \langle  \chi^{1+q},( \mathcal{B} \ast \mathcal{V} ) \alpha  \rangle \right \rvert.
\end{align*} 
For $m \geq 2$, the second term on the right hand side of \eqref{eq:dyn-moment} is bounded by 
\begin{multline}
   \left \lvert  \dfrac{1}{r} \, \sum_{l=1}^K \, \sum_{i=1}^{r\bar{E}^{r}_l(m-1)}  \langle \chi^{1+q} \left( \cdot -  \bar{S}^r (U^r_l (i)/r ,m+t ) \right),  \vartheta^{r,l}(i) \rangle \right \rvert \\
 + \left \lvert \dfrac{1}{r} \, \sum_{l=1}^K \, \sum_{i=1+r\bar{E}^{r}_l(m-1)}^{r\bar{E}^{r}_l(m+t)} \langle \chi^{1+q} \left( \cdot -  \bar{S}^r (U^r_l (i)/r ,m+t ) \right),  \vartheta^{r,l}(i) \rangle \right \rvert.  \label{eqo0}
\end{multline}
The second term in \eqref{eqo0} is bounded by 
$ 2 (L+1) \,  \left\lvert \langle \chi^{1+q},( \mathcal{B} \ast \mathcal{V} ) \alpha  \rangle \right\rvert.
$
Now, let us bound the first term of \eqref{eqo0}. To do this, we partition the interval $[0,m-1]$  into subintervals $[m-j-1,m-j]$, where the integer $j \in \lbrace 1 ,2, \cdots, m-1 \rbrace $. Thus, the first term is bounded by
\begin{align}
\sum_{j=1}^{m-1} \dfrac{1}{r} \, \sum_{l=1}^{K} \, \sum_{i=1+r\bar{E}^{r}_l(m-j-1)}^{r\bar{E}^{r}_l(m-j)}  \left \lvert \langle \chi^{1+q} \left( \cdot -  \bar{S}^r (U^r_l (i)/r ,m+t ) \right),  \vartheta^{r,l}(i) \rangle \right\rvert.   \label{eqo1}
\end{align}
Considering the definition of $\vartheta^{r,l}(i)$ in \eqref{es:mathca-X}, it becomes apparent that
\begin{align*}
\langle \chi^{1+q} \left( \cdot -  \bar{S}^r (U^r_l (i)/r ,m+t ) \right),  \vartheta^{r,l}_k(i) \rangle = \sum_{n=1}^{N^{l,r}_k(i)} \chi^{1+q} \big( V^r_{lk}(i,n)
- \bar{S}^r(U^r_l(i)/r,m+t) \big).
\end{align*}
Suppose there exists $t^*\in [0,m+t]$ such that $e.\bar{Z}^r(t^*)=0$. It follows that
\begin{align*}
\sum_{n=1}^{N^{l,r}_k(i)} \chi^{1+q} \big( V^r_{lk}(i,n)
- \bar{S}^r(U^r_l(i)/r,m+t) \big)  \leq  \sum_{n=1}^{N^{l,r}_k(i)} \chi^{1+q} \big( V^r_{lk}(i,n)
- \bar{S}^r(U^r_l(i)/r,t^*) \big).
\end{align*}
By applying the dynamic equation \eqref{eq:shifted-dynamic-equation2} with $m=0$, $t'=U^r_l(i)/r, h=t^*-U^r_l(i)/r$ and $g=\chi^{1+q}$, we obtain 
$$\sum_{n=1}^{N^{l,r}_k(i)} \chi^{1+q} \big( V^r_{lk}(i,n)
- \bar{S}^r(U^r_l(i)/r,t^*) \big)=0.$$
Hence, the quantity in \eqref{eqo1} is zero.
Now, consider the scenario where $e.\bar{Z}^r(s)>0$ for all $s \in [0,m+t]$. Fix $l\in \mathcal{K}$ and $j \in \lbrace 1, 2 , \cdots, m-1 \rbrace$. For $r\bar{E}^{r}_l(m-j-1) < i \leq r\bar{E}^{r}_l(m-j)$, it follows that $ U_l^r(i) /r \in \left]m-j-1,m-j \right]$. Therefore
$$
\bar{S}^r(U^r_l(i)/r,m+t) \geq \bar{S}^r(m-j,m) \geq j/M_{\gamma}.
$$
As a result, for each $k\in \mathcal{K}$
\begin{align*}
\sum_{n=1}^{N^{l,r}_k(i)} \chi^{1+q} \big( V^r_{lk}(i,n)
- \bar{S}^r(U^r_l(i)/r,m+t) \big)  \leq  \sum_{n=1}^{N^{l,r}_k(i)} \chi^{1+q} \big( V^r_{lk}(i,n)
- j/M_{\gamma} \big)   \\
 \leq  \sum_{n=1}^{N^{l,r}_k(i)} \big( 1_{\lbrace [j/M_{\gamma}, \infty) \rbrace}  \chi^{-(1+q)} \big) \big(V^r_{lk}(i,m) \chi^{2+2q} (V^r_{lk}(i,m)\big). 
\end{align*}
Hence, for each $k \in \mathcal{K}$
\begin{align}
\langle \chi^{1+q} \left( \cdot -  \bar{S}^r (U^r_l (i)/r ,m+t ) \right),  \vartheta^{r,l}_k(i) \rangle 
 \leq \left( \frac{M_{\gamma}} {j} \right)^{1+q} \, \langle \chi^{2+2q} , \vartheta^{r,l}_k(i) \rangle .  
 \label{eqo2}
\end{align}
Using \eqref{eqo1} and \eqref{eqo2}, we can bound the second term on the right-hand side of \eqref{eqo0} by
\begin{align*}
\sum_{j=1}^{m-1} \left( \left( \dfrac{M_{\gamma}}{j} \right)^{1+q} \, \dfrac{1}{r} \, \sum_{l=1}^{K}  \, \sum_{i=1+r\bar{E}^{r}_l(m-j-1)}^{r\bar{E}^{r}_l(m-j)} \left \lvert \langle \chi^{2+2q} , \vartheta^{r,l}(i) \rangle \right\rvert \right) 
 \leq 2 M_{\gamma}^{1+q} \, \left(1+1/q \right) \, \left\lvert \langle \chi^{2+2q}, ( \mathcal{B} \ast \mathcal{V} ) \alpha \rangle \right \rvert, 
\end{align*}
where the last inequality is derived from the event $\Omega^r_{LN}$ and by upper-bounding the $p$-series by
$
\left(1+ \int_1^{\infty} (1/x^{1+p}) \, dx \right) \leq (1+q^{-1}).
$
Finaly, we obtain 
$$
 \left\| \,  \langle \chi^{1+q}, \bar{\mathcal{Q}}^{r,m} (\cdot) \rangle    \, \right \|_L  \leq \widetilde{M}_{T,L},
$$
with
$$
\widetilde{M}_{T,L}= M_{0,q} + 2 M_{\gamma}^{1+q} \, \left\lvert \langle \chi^{2+2q}, ( \mathcal{B} \ast \mathcal{V} ) \alpha \rangle \right \rvert 
\left(1+ 1/q \right) + 2 (L+1) \, \left\lvert \langle \chi^{1+q},( \mathcal{B} \ast \mathcal{V} ) \alpha  \rangle \right \rvert.
$$
\end{proof}

\begin{lemma} \label{mass total gamma lk}
Assume \eqref{eq:assum1}-\eqref{eq:assum10} and \eqref{eq:assum-init1}-\eqref{eq:assum-init4}. Fix $T>0, L>1$ and $\eta > 0$, there exists a constant $\bar{M}_{T,L}>0$ such that
\begin{align}
\liminf_{r \rightarrow \infty} \,
  \mathbb{P}^r \, \left( \max_{m \leq \lfloor rT \rfloor} \,  \left\| \,  \langle 1, \bar{\mathcal{Q}}^{r,m} (\cdot)  \rangle  \, \right\|_L \leq \bar{M}_{T,L} \right) \geq 1-\eta \label{eq: mass total gamma lk}
\end{align}
\end{lemma}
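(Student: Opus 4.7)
The plan is to mirror the proof of Lemma \ref{tilde-M-T}, using $g = 1_{(0,\infty)}$ (which belongs to $\mathcal{A}$) in place of $\chi^{1+q}$. Starting from the dynamic equation \eqref{eq:shifted-dynamic-equation2} applied with $t'=0$ and $h = m+t$ together with \eqref{1glob}, one has
\begin{equation*}
\langle 1, \bar{\mathcal{Q}}^{r,m}(t) \rangle = \bar{\mathcal{Q}}^r(0)\bigl(I(\bar{S}^r(m+t))\bigr) + \frac{1}{r} \sum_{l=1}^K \sum_{i=1}^{r\bar{E}_l^r(m+t)} \vartheta^{r,l}(i)\bigl( I(\bar{S}^r(U_l^r(i)/r, m+t))\bigr).
\end{equation*}
I would work on the event $\Omega^r_{M,q} = \Omega^r_\gamma \cap \Omega^r_{0,q} \cap \Omega^r_{LN}$ already constructed in Lemma \ref{tilde-M-T}, whose liminf probability exceeds $1 - \eta$; the upper bound $\langle 1, e\cdot \bar{\gamma}^{r,m}(\cdot)\rangle \leq M_\gamma$ from $\Omega^r_\gamma$ and the uniform WLLN of Proposition \ref{GLVLM} are the only nontrivial ingredients needed.

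Second, the sum over arrivals is partitioned by arrival epoch into a \emph{recent} block $U_l^r(i)/r \in [(m-1)\vee 0,\, m+t]$ of length at most $L+1$, and, when $m \geq 2$, unit \emph{old} buckets $[m-j-1, m-j]$ for $j = 1, \dots, m-1$. For recent arrivals one simply drops the indicator against the shifted service, so that $\vartheta^{r,l}(i)\bigl(I(\cdot)\bigr) \leq \langle 1_{(0,\infty)}, \vartheta^{r,l}(i) \rangle$, and applies the WLLN with $g = 1_{(0,\infty)}$ to bound the recent contribution by approximately $2(L+1)\,|Q\alpha| = 2(L+1)\,|\lambda|$. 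For arrivals in bucket $j \geq 1$, the estimate $\bar{S}^r(U_l^r(i)/r, m+t) \geq \bar{S}^r(m-j, m) \geq j/M_\gamma$ combined with Markov's inequality yields
\begin{equation*}
\vartheta^{r,l}(i)\bigl(I(j/M_\gamma)\bigr) \leq (M_\gamma/j)^{1+q}\, \langle \chi^{1+q}, \vartheta^{r,l}(i) \rangle,
\end{equation*}
and the WLLN with $g = \chi^{1+q}$ bounds the sum over $i$ in bucket $j$ by a constant $C_1$ close to $|\langle \chi^{1+q}, \mathcal{B}\ast\mathcal{V}\rangle \alpha|$. Summation over $j$ is then a convergent $p$-series since $q > 0$, contributing at most $M_\gamma^{1+q} C_1 \sum_{j\geq 1} j^{-(1+q)} \leq M_\gamma^{1+q} C_1 (1+1/q)$. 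The initial term is trivially dominated by $\langle 1, \bar{\mathcal{Q}}^r(0)\rangle$, which converges in distribution to $\langle 1, \mathcal{B}\ast\mathcal{V}^0\rangle \bar{Z}(0) = Q\bar{Z}(0)$ by Lemma \ref{lem:conv-zeta-r} together with \eqref{eq:assum-init1}, so is bounded with probability at least $1-\eta$ for large $r$. Adding the three contributions produces a constant $\bar{M}_{T,L}$ depending only on $M_\gamma$, $q$, $L$ and the limiting moments of $\mathcal{B}\ast\mathcal{V}$ and $\mathcal{B}\ast\mathcal{V}^0$.

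The main obstacle compared to Lemma \ref{tilde-M-T} is that the test function $1_{(0,\infty)}$ lacks the decay near the origin enjoyed by $\chi^{1+q}$: a pure Markov bound on the shifted service would diverge for recent arrivals, whose shift $\bar{S}^r(U_l^r(i)/r, m+t)$ can be arbitrarily small. The recent/old split above is what resolves this, using the WLLN directly on the recent block (yielding an $O(L+1)$ contribution independent of $m$) and invoking Markov only for old buckets, where the geometric-in-$j$ decay it produces is summable. The rest of the estimate is then uniform in $m \leq \lfloor rT\rfloor$ and $t \in [0,L]$, as required.
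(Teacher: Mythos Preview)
Your approach is correct but genuinely different from the paper's. You essentially rerun the proof of Lemma~\ref{tilde-M-T} with the test function $1_{(0,\infty)}$ in place of $\chi^{1+q}$: start the dynamic equation at time $0$, bucket old arrivals by age $j$, use the service lower bound $\bar S^r(m-j,m)\ge j/M_\gamma$ on $\Omega^r_\gamma$ together with Markov's inequality against $\chi^{1+q}$, and sum the resulting $p$-series. The only new wrinkle you correctly identify is that $1_{(0,\infty)}$ has no decay at the origin, which you handle by treating the recent block of length $L+1$ directly via the WLLN with $g=1_{(0,\infty)}\in\mathcal A$. (One point you leave implicit is the case where the queue empties somewhere in $[0,m+t]$; as in the proof of Lemma~\ref{tilde-M-T}, arrivals preceding the last empty time contribute zero, and the nonempty regime after it supplies the needed lower bound on $\bar S^r$.)

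The paper instead leverages Lemma~\ref{tilde-M-T} as a black box: it covers $[0,\lfloor rT\rfloor+L]$ by unit intervals $[k',k'+1]$, applies the dynamic equation over each unit step, bounds new arrivals in $[k',k'+1]$ by $|\lambda|+1$ via the WLLN, and controls the surviving mass $\bar{\mathcal Q}^r(k')(I(\bar S^r(k',k'+1)))$ by Markov's inequality against the already-established bound $\langle\chi^{1+q},\bar{\mathcal Q}^r(k')\rangle\le\widetilde M_{T,L}$, using $\bar S^r(k',k'+1)\ge 1/(M_\gamma+e\cdot\alpha)$. This is shorter and avoids repeating the $p$-series bookkeeping, at the cost of depending on the preceding lemma; your route is self-contained and would in fact prove both lemmas simultaneously.
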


\begin{proof}

Firstly, it should be noted that proving
\begin{align}
\liminf_{r \rightarrow \infty} \, \mathbb{P}^r \, \left(  \,  \left\| \langle 1, \bar{\mathcal{Q}}^{r} (t) \rangle \right\|_{\lfloor rT \rfloor + L} \leq \bar{M}_{T,L} \right) \geq 1-\eta
\end{align}
is equivalent to proving \eqref{eq: mass total gamma lk}.
By Lemmas \ref{exell} and \ref{tilde-M-T} , and \eqref{glika}, \eqref{const:M1}  we have
\begin{align*}
\liminf_{r \rightarrow \infty} \, \mathbb{P}^r \, \left( \Omega^r_E \cap \Omega_M^r \cap  \Omega_{LN}^r \cap \Omega^r_{\gamma} \right) \geq 1- \eta,
\end{align*} 
where $\Omega_M^r$ denotes the event in  \eqref{1+q}. 
Let $\omega$ be a fixed sample path in  $ \Omega^r_E \cap \Omega_M^r \cap  \Omega_{LN}^r \cap \Omega^r_{\gamma} $. In the remainder of the proof, all random objects are evaluated at this
$\omega$.
\par Let us cover the interval $[0,\lfloor rT \rfloor + L]$, by the overlapping intervals $I_{k'} = [k',k'+1]$ where $k' \in \lbrace 0,1, \cdots, \lfloor t_1 \rfloor \rbrace$, and $t_1=\lfloor rT \rfloor + L$. 
The concept here is that if the total mass is bounded by the same constant $\bar{M}_{T,L} > 0$ on each interval $I_{k'}$, then it remains bounded by this constant over the entire interval $[0,\lfloor rT \rfloor + L]$. Hence, it is enough to demonstrate that for every $k' \in \lbrace 0,1, \cdots, \lfloor t_1 \rfloor \rbrace$, we have
\begin{align*}
\sup_{t \in I_{k'}} \left| \langle 1, \bar{\mathcal{Q}}^{r} (t) \rangle \right| \leq \bar{M}_{T,L}. 
\end{align*}
Let $t \in I_{k'}$, by using the dynamic equation \eqref{eq:shifted-dynamic-equation2} with $m=0, \, g= 1_{\mathbb{R}_+}, \, t'=k', \, h=t-k' $, and by bounding the second term of the right-hand side of \eqref{eq:shifted-dynamic-equation2} by its total mass, we obtain
 \begin{align}
 \langle 1,\bar{\mathcal{Q}}^{r}(t) \rangle \leq
  \bar{\mathcal{Q}}^{r}(k')\left(  I(\bar{S}^{r}(k',t)) \right) 
  +\frac{1}{r} \sum_{l=1}^{K} \sum_{i=r\bar{E}_{l}^{r}(k')+1}^{r\bar{E}_{l}^{r}(t)} \, \langle 1, \vartheta^{r,l}(i) \rangle \label{eq prin}
\end{align}
As we are within the event $\Omega^r_{LN}$, the second term of the right-hand side of \eqref{eq prin} is bounded by 
\begin{align*}
 \left| \frac{1}{r} \sum_{l=1}^{K} \sum_{i=r\bar{E}_{l}^{r}(k')+1}^{r\bar{E}_{l}^{r}(k'+1)} \, \langle 1, \vartheta^{r,l}(i) \rangle \right| \leq \left| Q \alpha \right| + 1 = \left| \lambda \right| +1.
\end{align*}
Now, let us bound the first  term of the right-hand side of \eqref{eq prin}. Firstly, consider the scenario where $ e.\bar{Z}^r(s)>0$ for any $s \in [k',k'+1]$.
Given that we are within the event $\Omega^r_{\gamma}$, the total mass $ \langle 1,\bar{\gamma}^{r}(t) \rangle  $ is bounded by $M_{\gamma}$ over the interval $[0,\lfloor rT \rfloor +L]$. Consequently, $ \langle 1,\bar{\gamma}^{r}(t) \rangle  $ is bounded on each interval $I_{k'}$ with $k'\in \lbrace 0, 1, \cdots, \lfloor t_1 \rfloor -1 \rbrace$. However, it is necessary to bound $ \langle 1,\bar{\gamma}^{r}(t) \rangle  $ on $I_{\lfloor t_1 \rfloor}$. 
Using \eqref{eq:gamma-r} and considering that we are within the event $\Omega^r_E$, we have for $t \in I_{\lfloor t_1 \rfloor}$
\begin{align*}
\langle 1, \bar{\gamma}^{r}(t) \rangle  \leq  \langle 1, \bar{\gamma}^{r}(\lfloor t_1 \rfloor) \rangle +  \sum_{k=1}^{K} (\bar{E}^{r}_{k}(\lfloor t_1 \rfloor +1)- \bar{E}^{r}_{k}(\lfloor t_1 \rfloor)) \leq M_{\gamma} + e.\alpha.
\end{align*}
This suggests that for each $k'\in \lbrace 0, 1, \cdots, \lfloor t_1 \rfloor  \rbrace$, we have
$ \sup_{t \in I_{k'}} \langle 1, \bar{\gamma}^{r} (t) \rangle  \leq M_{\gamma} + e.\alpha$.
Therefore
$
\bar{S}^r(k',k'+1)= \int_{k'}^{k'+1} 1/ (\langle 1, \bar{\gamma}^{r} (s) \rangle) ds \geq 1/(M_{\gamma}+e.\alpha).
$
Thus, the first term on \eqref{eq prin} is bounded by
\begin{align*}
\left| \bar{\mathcal{Q}}^{r} (k') \left( I(\bar{S}^r(k',k'+1) ) \right) \right| & \leq \left| \bar{\mathcal{Q}}^{r} (k') \left( I \left( 1/(M_{\gamma}+ e.\alpha) \right)  \right) \right|  \\ 
 & \leq
 \left( M_{\gamma} + e.\alpha \right)^{1+q} \left| \langle \chi^{1+q}, \bar{\mathcal{Q}}^{r} (k') \rangle \right| 
  \leq \left( M_{\gamma}+ e.\alpha \right)^{1+q} \widetilde{M}_{T,L},  
\end{align*}
where the second inequality is derived from Markov's inequality and the last one arises from fact that we are within the event $\Omega^r_M$. 
Now, assume there exists an $s \in [k',k'+1]$ such that $e.\bar{Z}^r(s)=0$. In that scenario by applying the dynamic equation \eqref{eq:shifted-dynamic-equation2} with $m=0, \, g= 1_{\mathbb{R}_+}, \, t'=k', \, h=s-k' $, we obtain $ \bar{\mathcal{Q}}^{r} (k') ( I(\bar{S}^r(k',s) ) = 0$, and since $ \bar{S}^r(k',s) \leq \bar{S}^r(k',k'+1) $, then
$$
\bar{\mathcal{Q}}^{r} (k') \left( I(\bar{S}^r(k',k'+1) \right) = 0 \leq \left( M_{\gamma}+ e.\alpha \right)^{1+q} \widetilde{M}_{T,L}.
$$
Finally, we have our result with the constant 
$
\bar{M}_{T,L} = \left( M_{\gamma}+ e.\alpha \right)^{1+q} \widetilde{M}_{T,L} + |\lambda | +1 .
$

\end{proof}


\subsubsection{Compact containment} \label{presub5}~

 The following lemma establishes the compact
containment of the shifted fluid scaled process $\bar{\mathcal{Q}}^{r,m}(\cdot)$ on $[0,L]$, which is the first step to prove the compactness. 
\begin{lemma} 
\label{cocn}
Assume \eqref{eq:assum1}-\eqref{eq:assum10} and \eqref{eq:assum-init1}-\eqref{eq:assum-init4}. Fix $T>0,~ L>1$ and $\eta > 0$, there exists a compact subset $\mathbb{K}_{T,L} \subset \mathcal{M}^K$ such that
$$ 
\underset {r \rightarrow \infty}{\liminf} \,
  \mathbb{P}^r \, \left( \bar{\mathcal{Q}}^{r,m} (t) \in \mathbb{K}_{T} \quad \forall  m \leq \lfloor rT \rfloor \quad t\in [0,L] \,
   \right) \geq 1- \eta 
$$
\end{lemma}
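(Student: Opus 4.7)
The plan is to combine the mass bound (Lemma \ref{mass total gamma lk}) and the $(1+q)$-moment bound (Lemma \ref{tilde-M-T}) with Prokhorov's characterization of relatively compact subsets of $\mathcal{M}$ to construct the compact set $\mathbb{K}_{T,L}$ explicitly. Recall that a subset of $\mathcal{M}$ is relatively compact in the weak topology iff (i) total masses are uniformly bounded, and (ii) the family is uniformly tight, i.e.\ for every $\varepsilon>0$ there exists $R_\varepsilon>0$ such that $\mu([R_\varepsilon,\infty)) \leq \varepsilon$ uniformly over the family. The analogous componentwise conditions characterise compactness in $\mathcal{M}^K$, which is harmless since $K$ is finite.

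Let $\bar{M}_{T,L}$ and $\widetilde{M}_{T,L}$ be the constants delivered by Lemmas \ref{mass total gamma lk} and \ref{tilde-M-T}, respectively, when each is applied with tolerance $\eta/2$ in place of $\eta$. Define
$$
\mathbb{K}_{T,L} := \left\{ \mu \in \mathcal{M}^K : |\langle 1, \mu \rangle| \leq \bar{M}_{T,L} \text{ and } |\langle \chi^{1+q}, \mu \rangle| \leq \widetilde{M}_{T,L} \right\}.
$$
To verify that $\mathbb{K}_{T,L}$ is compact in $\mathcal{M}^K$, condition (i) is built into the definition, while for (ii) the Markov-type estimate
$$
\mu_k([x,\infty)) \;\leq\; \frac{\langle \chi^{1+q}, \mu_k \rangle}{x^{1+q}} \;\leq\; \frac{\widetilde{M}_{T,L}}{x^{1+q}} \xrightarrow[\,x\to\infty\,]{} 0
$$
gives uniform tightness componentwise. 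Closedness follows since $\mu \mapsto \langle 1, \mu \rangle$ is continuous and $\mu \mapsto \langle \chi^{1+q}, \mu \rangle$ is lower semicontinuous in the weak topology (the latter because $\chi^{1+q}$ is nonnegative and continuous, so it is a supremum of bounded continuous functions via truncation).

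With this compact set in hand, the probabilistic conclusion follows immediately by intersecting the two high-probability events. Namely, on the event appearing in Lemma \ref{mass total gamma lk} (mass bounded by $\bar{M}_{T,L}$) intersected with the event appearing in Lemma \ref{tilde-M-T} ($(1+q)$-moment bounded by $\widetilde{M}_{T,L}$), uniformly in $m \leq \lfloor rT \rfloor$ and $t \in [0,L]$, we have $\bar{\mathcal{Q}}^{r,m}(t) \in \mathbb{K}_{T,L}$. The $\liminf$ of the probability of the intersection is at least $1-\eta$.

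There is no genuine technical obstacle in this step: both prerequisite estimates are already established, and the only real work is identifying the correct compact set and invoking Prokhorov. The one subtlety worth flagging is that tightness cannot be extracted from the total-mass bound alone; the moment bound of order strictly greater than $1$ (enabled by the choice of $q$ in \eqref{const-q} and the moment assumptions \eqref{eq:assum6}, \eqref{eq:assum9}, \eqref{eq:assum-init3}) is essential to push mass away from $\infty$ uniformly, and this is precisely why Lemma \ref{tilde-M-T} was proved at this moment order.
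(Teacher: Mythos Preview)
Your proof is correct and follows essentially the same approach as the paper: combine the total-mass bound (Lemma \ref{mass total gamma lk}) with the $(1+q)$-moment bound (Lemma \ref{tilde-M-T}), then use Markov's inequality to obtain uniform tightness and hence (relative) compactness of the resulting level set in $\mathcal{M}^K$. The paper takes $M_{T,L}=\bar{M}_{T,L}\vee\widetilde{M}_{T,L}$ as a single constant and cites Kallenberg's criterion rather than Prokhorov, and it stops at relative compactness without your closedness argument, but these are cosmetic differences.
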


\begin{proof}
From Lemmas \ref{tilde-M-T} and  \ref{mass total gamma lk}, we obtain
\begin{equation}
\liminf_{r\to \infty}
  \mathbb{P}^r\left( \max_{m \leq \lfloor rT \rfloor} \,  \left\| \,  \langle 1,\bar{\mathcal{Q}}^{r,m} (\cdot) \rangle \vee \langle \chi^{1+q},\bar{\mathcal{Q}}^{r,m} (\cdot) \rangle  \,  \right\|_L \leq M_T\right) = 1, 
  \label{event:moment}
\end{equation}
where 
$M_{T,L}=\widetilde{M}_{T,L} \vee \bar{M}_{T,L}$. 
Define 
$\mathbb{C}_{T,L} = \lbrace \xi \in \mathcal{M} :  \langle 1 , \xi \rangle  \vee \langle \chi^{1+q}, \xi \rangle  \leq M_T \rbrace$
such that for $m \leq \lfloor rT \rfloor$ and $t \in [0,L]$,
we have
$
\bar{\mathcal{Q}}^{r,m} (t) \in \left(\mathbb{C}_{T,L}\right)^K.  
$
By the Markov's inequality, we have  
$$ \underset{\xi \in \mathbb{C}_{T,L}}{ \text{ sup }} \langle 1_{[c, \infty)} , \xi \rangle \rightarrow 0 ~~ \text{as} ~~ c \rightarrow \infty,$$
which implies that $\mathbb{C}_{T,L}$ is relatively compact. Thus, the set $\mathbb{K}_{T,L}=\mathbb{C}^K_{T,L}$ is relatively compact.
\end{proof}

\subsubsection{Asymptotic regularity}~ \label{presub6}

In the following lemma, we show that the shifted fluid scaled process $\bar{\mathcal{Q}}^{r,m}(\cdot)$ assigns arbitrarily small mass to small intervals; this is essential to establish the oscillation bound. 
\begin{lemma} \label{as lm} Assume  \eqref{eq:assum1}-\eqref{eq:assum10} and \eqref{eq:assum-init1}-\eqref{eq:assum-init4}. 
Fix $T > 0$ and $L > 1$. For each $\varepsilon ,\eta > 0$ there exists a $\kappa > 0$ (depending on $\varepsilon$ and $\eta$) such that 
\begin{equation}
\underset{r \rightarrow \infty}{\liminf} \, \mathbb{P}^r \left( \underset{m \leq \lfloor rT \rfloor}{\max} \; \sup_{x \in \mathbb{R}_+}  \, \left \| \,  \bar{\mathcal{Q}}^{r,m} (\cdot) \left( [x,x+\kappa] \right)  \, \right \|_L \leq \varepsilon \right) \geq 1-\eta \label{asymp reg}
\end{equation}
\end{lemma}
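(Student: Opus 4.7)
The plan is to apply the dynamic equation \eqref{eq:shifted-dynamic-equation2} for $\bar{\mathcal{Q}}^r$ on $[0,m+t]$ with $g=1_{[x,x+\kappa]}$, yielding
\begin{equation*}
\bar{\mathcal{Q}}^{r,m}(t)([x,x+\kappa]) = \bar{\mathcal{Q}}^r(0)([x+\sigma_0,\, x+\sigma_0+\kappa]) + \frac{1}{r}\sum_l \sum_{i=1}^{r\bar{E}^r_l(m+t)} \langle 1_{[x+\sigma_i,\, x+\sigma_i+\kappa]}, \vartheta^{r,l}(i)\rangle,
\end{equation*}
with $\sigma_0 = \bar{S}^r(m+t)$ and $\sigma_i = \bar{S}^r(U^r_l(i)/r,\,m+t)$, and then to bound the initial and arrival contributions separately and uniformly in $(m,t,x)$.

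For the initial term I split on $y:=x+\sigma_0$. For $y\ge Y$ with $Y$ large, Markov's inequality applied to $\langle\chi^{1+q},\bar{\mathcal{Q}}^r(0)\rangle$ (tight by Lemma \ref{lem:dist-V-k} together with \eqref{eq:assum-init3}) yields $\bar{\mathcal{Q}}^r(0)([y,\infty))\le\varepsilon/2$ uniformly. For $y\le Y$, the weak convergence $\bar{\mathcal{Q}}^r(0)\Rightarrow \mathcal{B}\ast\mathcal{V}^0\bar{Z}(0)$ to an atomless measure (since $\nu^0\in\mathcal{M}^{c,K}$) upgrades to uniform convergence of distribution functions on $[0,Y]$, giving $\sup_{y\le Y}\bar{\mathcal{Q}}^r(0)([y,y+\kappa])<\varepsilon/2$ with high probability for $\kappa$ small and $r$ large.

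The arrival term is the delicate piece, since the indicator varies with $i$ through $\sigma_i$, preventing direct use of Proposition \ref{GLVLM}. On the event $\Omega^r_\gamma$ from \eqref{const:M1}, I partition $[0,m+t]$ into sub-intervals $[s_j,s_{j+1}]$ of length at most $\min(\kappa/M_\gamma,1)$ on the non-idle portion of the queue (idle stretches are skipped, since $\bar{S}^r$ is constant there and all residual mass is wiped out at the preceding emptying time, so the dynamic equation can simply be restarted). The bound $\langle 1,\bar{\gamma}^r\rangle\le M_\gamma$ forces $|\sigma_{s_j}-\sigma_{s_{j+1}}|\le\kappa$, so on each piece $1_{[x+\sigma_i,x+\sigma_i+\kappa]}$ is dominated by $g_j:=1_{[x+\sigma_{s_{j+1}},\infty)}-1_{(x+\sigma_{s_{j+1}}+2\kappa,\infty)}$, a difference of two elements of $\mathcal{A}$. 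After an integer shift bringing $[s_j,s_{j+1}]$ into the WLLN window, Proposition \ref{GLVLM} gives, up to an error $\varepsilon_j$,
\begin{equation*}
\frac{1}{r}\sum_{l,\,i\in I_j}\langle 1_{[x+\sigma_i,x+\sigma_i+\kappa]},\vartheta^{r,l}(i)\rangle \le (s_{j+1}-s_j)\,\langle g_j,\mathcal{B}^r\ast\mathcal{V}^r\rangle\,\alpha + \varepsilon_j.
\end{equation*}
Summing and using $(s_{j+1}-s_j)\le M_\gamma(\sigma_{s_j}-\sigma_{s_{j+1}})$ converts the sum into a Riemann lower-sum for $M_\gamma|\alpha|\int_0^{\sigma_0}\mathcal{B}\ast\mathcal{V}([x+y,x+y+2\kappa])\,dy$, and Fubini gives the universal bound $\int_0^\infty \mathcal{B}\ast\mathcal{V}([x+y,x+y+2\kappa])\,dy \le 2\kappa\,\langle 1,\mathcal{B}\ast\mathcal{V}\rangle = 2\kappa\,Q$. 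Hence the arrival contribution is at most $2\kappa M_\gamma|\alpha|\,|Q|$ plus negligible error, which is $\le\varepsilon/2$ once $\kappa$ is small enough.

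Intersecting the high-probability events from Proposition \ref{GLVLM}, Lemma~\ref{tilde-M-T}, the initial-state convergence, and $\Omega^r_\gamma$ then yields the claim. The main obstacle is making the partition rigorous across idle stretches of the queue, for which the fix is to restart the dynamic equation at each emptying time (jobs served before an emptying time contribute nothing to the residual measure), together with the bookkeeping of $O(rT)$ integer shifts required to chain Proposition \ref{GLVLM} while keeping the WLLN error constants uniform in $m\le\lfloor rT\rfloor$.
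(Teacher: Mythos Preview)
Your approach has a genuine gap in the control of the arrival term. You partition $[0,m+t]$ into pieces of length at most $\min(\kappa/M_\gamma,1)$ and invoke Proposition~\ref{GLVLM} on each piece, but the number of pieces is of order $rT/\kappa$, and the WLLN error in Proposition~\ref{GLVLM} is a \emph{fixed} $\varepsilon_1>0$ per application, uniform in $m$ but not summable: the accumulated error $\sum_j\varepsilon_j$ is of order $rT\varepsilon_1/\kappa$, which blows up as $r\to\infty$. Your closing remark about ``keeping the WLLN error constants uniform in $m\le\lfloor rT\rfloor$'' does not address this; uniformity in $m$ is already built into Proposition~\ref{GLVLM}, and the problem is the \emph{number} of invocations, not their individual size. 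A secondary issue: the inequality $|\sigma_{s_j}-\sigma_{s_{j+1}}|\le\kappa$ does not follow from $\langle 1,\bar\gamma^r\rangle\le M_\gamma$. An upper bound on the queue gives a \emph{lower} bound on $\bar S^r(s_j,s_{j+1})$, not an upper bound; you would need a lower bound on the queue size, which you do not have.

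The paper sidesteps both problems by never going back to time~$0$. Instead, it applies the dynamic equation from a time $t_1$ chosen so that $t-t_1\le M_\gamma(2M_T/\varepsilon)^{1/(1+q)}$, a constant independent of $r$. The value $\bar{\mathcal{Q}}^r(t_1)([x,x+\kappa]+\bar S^r(t_1,t))$ is then controlled either by the initial condition (if $t_1=0$), by the queue having been nearly empty (if $t_1=t_0$), or by Markov's inequality against the uniform moment bound $\langle\chi^{1+q},\bar{\mathcal{Q}}^r(t_1)\rangle\le\widetilde M_{T,L}$ from Lemma~\ref{tilde-M-T} (if $t_1=t-M_\gamma(2M_T/\varepsilon)^{1/(1+q)}$, so that $\bar S^r(t_1,t)\ge(2M_T/\varepsilon)^{1/(1+q)}$). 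The arrival contribution over $[t_1,t]$ then involves only a \emph{bounded} number $N$ of partition pieces, so the WLLN error $N\varepsilon_1$ is controllable, and the main terms are handled by noting that the intervals $C_j$ on which $\mathcal{B}^r\ast\mathcal{V}^r$ is evaluated are pairwise disjoint, so their total $(\mathcal{B}^r\ast\mathcal{V}^r)$-measure is bounded by the total mass. The moment bound of Lemma~\ref{tilde-M-T} is precisely the missing ingredient that lets one localise in time and avoid the $O(rT)$ summation.
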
	
 
\begin{proof}

By definition of the shifted scaled process $\bar{\mathcal{Q}}^{r,m}(\cdot)$ it suffices to prove  
 \begin{align*}
 \underset{r \rightarrow \infty}{\liminf} \,  \mathbb{P}^r \left( \, \sup_{x \in \mathbb{R}_+}  \, \left\| \bar{\mathcal{Q}}^{r} (t) \left( [x,x+\kappa] \right) \right\|_{\lfloor rT \rfloor + L} \leq \varepsilon \right) \geq 1-\eta.
 \end{align*}
Define the event
$
\Omega_z^r = \displaystyle{ \left \lbrace \underset{ x \in \mathbb{R}_+}{\sup }  \, \left| \bar{\mathcal{Q}}^{r} (0) ([x,x+\kappa])  \right| \leq \varepsilon/2  \right \rbrace } .
$
By employing a reasoning similar to that used to derive (79) in \cite{zhang2009law}, we obtain
\begin{align}
\liminf_{r \rightarrow \infty} \, \mathbb{P}^r \left( \Omega^r_{z} \right) \geq 1-\eta. \label{asymptotic 1}
\end{align}
From equations \eqref{glika}, \eqref{const:M1}, \eqref{event:moment}, and \eqref{asymptotic 1}, we derive
\begin{align}
\underset{r \rightarrow \infty}{\liminf} \,
\mathbb{P}^r (\Omega_A^r) \geq 1-\eta. \label{ev:const-A}
\end{align}  
Here, the event $\Omega^r_A:= \Omega_z^r \cap \Omega_{LN}^r \cap \Omega^r_{\gamma} \cap \Omega^r_C$, with $\Omega^r_C$ representing the event in \eqref{event:moment}.
Let $\omega$ be a fixed sample path in  $ \Omega_A^r $. In the remainder of the proof, all random variables are evaluated at this
$\omega$.
\par Let $t \in [0,\lfloor rT \rfloor + L]$, and recall the constants $M_{\gamma}$ and $M_T$ defined in the events $\Omega^r_{\gamma}$ and $\Omega^r_C$, respectively. Define  
\begin{align*}
t_1 = \max \Big( t_0, t - M_{\gamma} \left( 2 M_T / \varepsilon \right)^{1/(1+q)} \Big),
\end{align*}
where $ t_0= \sup  \lbrace s \leq t :  \langle 1,\bar{\mathcal{Q}}^{r} (s) \rangle  \leq \varepsilon/4 \rbrace $ if the supremum exists, and zero otherwise.
Applying the  dynamic equation \eqref{eq:shifted-dynamic-equation2} with $g= 1_{\lbrace [x,x+\kappa] \rbrace}, \, t'=t_1$ and $h=t-t_1$; we have for each $x \in \mathbb{R}_+$ 
\begin{align}
\bar{\mathcal{Q}}^{r} (t) ([x,x+\kappa]) & = \bar{\mathcal{Q}}^{r} (t_1) ([x,x+\kappa] + \bar{S}^{r} (t_1,t) ) \nonumber \\
& + \dfrac{1}{r} \sum_{l=1}^K \sum_{i= r \bar{E}^{r}_l (t_1) + 1 }^{ r \bar{E}^{r}_l (t)} \, \vartheta^{r,l} (i)  
([x,x+ \kappa] + \bar{S}^{r} ( U_l^r (i)/r,t)). \label{dy}
\end{align}  
If $t_1=0$, then by \eqref{asymptotic 1}  for each $x\in \mathbb{R}_+$,
$
\left| \bar{\mathcal{Q}}^{r} (0) ([x,x+\kappa] + \bar{S}^{r} (t)) \right| \leq \varepsilon /2.
$
If $t_1=t_0 >0$ then  for each $\delta>0$, there exists $s \in \: (t_1-\delta, t_1 ]$ such that 
\begin{align}
 \left| \langle 1,\bar{\mathcal{Q}}^{r} (s) \rangle \right|  \leq \varepsilon /4 . \label{Qbar}
 \end{align}
By applying \eqref{eq:shifted-dynamic-equation2} to $g= 1_{\lbrace [x,x+ \kappa] + \bar{S}^{r} ( t_1,t) \rbrace}$, $t'=s$ and $h=t_1-s$, we have
\begin{align*} 
\bar{\mathcal{Q}}^{r} (t_1) ([x,x+ \kappa] + \bar{S}^{r} ( t_1,t))  \leq \langle 1, \bar{\mathcal{Q}}^{r} (s) \rangle + \dfrac{1}{r} \sum_{l=1}^K \sum_{i=r \bar{E}^{r}_l (s) + 1 }^{ r \bar{E}^{r}_l (t_1)}  \langle 1, \vartheta^{r,l} (i) \rangle.  
\end{align*}
Given that we are on the event $\Omega^r_A$, especially $\Omega^r_{LN}$, then for large $r$,  it follows that
$$
\left| \bar{\mathcal{Q}}^{r}(t_1) ([x,x+ \kappa] + \bar{S}^{r} ( t_1,t)) \right| \leq \varepsilon /4 + 
2 \,  \delta \, \left| \lambda \right|.   
$$
 If we choose $\delta$ sufficiently small such that $ 2 \,  \delta \, | \lambda | < \varepsilon /4$, we  obtain
$$
\left| \bar{\mathcal{Q}}^{r}(t_1) ([x,x+ \kappa] + \bar{S}^{r} ( t_1,t)) \right| < \varepsilon /2.
$$
\item If $t_1= t - M_{\gamma} \left( 2 M_T / \varepsilon \right)^{\frac{1}{1+q}} > 0$, then $ \bar{S}^{r} ( t_1,t) \geq (t-t_1)/ M_{\gamma} \geq \left( 2 M_T / \varepsilon \right)^{\frac{1}{1+q}}$. Hence,
\begin{align*}
\left| \bar{\mathcal{Q}}^{r}(t_1) ([x,x+ \kappa] + \bar{S}^{r} ( t_1,t)) \right| & \leq \left| \bar{\mathcal{Q}}^{r}(t_1) \left(I \left(\left( 2 M_T / \varepsilon \right)^{\frac{1}{1+q}} \right) \right) \right| \\
& \leq \left| \langle \chi^{1+q} , \bar{\mathcal{Q}}^r(t_1) \right| \, (\varepsilon / 2 M_T)  \leq \varepsilon /2,
\end{align*}
where the second inequality follows from Markov's inequality, and the final one is a consequence of the event $\Omega^r_C$.
As a result, the first term in the right-hand side of \eqref{dy} is bounded by $\varepsilon /2$ . 
\par Now let us bound the second term on the right-hand side of \eqref{dy}. Denote
\begin{align}
I^r:= \dfrac{1}{r} \, \sum_{l=1}^K \sum_{i= r \bar{E}^{r}_l (t_1) + 1 }^{ r \bar{E}^{r}_l (t)} \, \vartheta^{r,l} (i)  
([x,x+ \kappa] + \bar{S}^{r} ( U_l^r (i)/r,t)). 
\end{align}
Fix $\delta>0$ and $0< \kappa< \delta/M_{\gamma}$. Let $ t_1, \ldots, t_N = t$ be a partition of the interval $[t_1,t]$ such that 
$t_{j+1}-t_j = \delta$ for all $j=1, \ldots, N-1$.  By definition of $t_1$ , we have  
$ N \leq (M_{\gamma}/\delta) \, \left(2 M_T/\varepsilon \right)^{1/(1+q)}. $ 
We can write $I^r$ as 
\begin{align*}
I^r= \sum^{N-1}_{j=1} \, \dfrac{1}{r} \, \sum_{l=1}^K \sum_{i= r \bar{E}^{r}_l (t_j) + 1 }^{ r \bar{E}^{r}_l (t_{j+1})} \, \vartheta^{r,l} (i) ([x,x+ \kappa] + \bar{S}^{r} ( U_l^r (i)/r,t)).
\end{align*}
We have
$$
\left| I^r \right|  \leq \;  \sum^{N-1}_{j=0} \left| \dfrac{1}{r} \sum_{l=1}^K \sum_{i= r \bar{E}^{r}_l (t_j) + 1 }^{ r \bar{E}^{r}_l (t_{j+1})} \,  \vartheta^{r,l} (i) (C_j)  \right| ,
$$
where 
$$ C_j=[x+\bar{S}^r(t_{j+1},t), x + \kappa + \bar{S}^r(t_{j},t)].
$$
Choose 
$$\varepsilon_1= \frac{\delta \varepsilon^{1+1/(1+q)}}{4 M_{\gamma} (2M_T)^{1/(1+q)}}.$$ 
For large $r$ we have
$$
| I^r |  \leq \; \delta  \sum^{N-1}_{j=0} \left|   (\mathcal{B}^r \ast \mathcal{V}^r)  (C_j) \alpha \right| + N \varepsilon_1.
$$
The intervals $\lbrace C_j \rbrace_{j=1}^{N-1}$ are pairwise disjoints. In fact, 
$$  \bar{S}^r(t_{j+2},t) - \bar{S}^r(t_{j},t) - \kappa = \bar{S}^r(t_{j+2},t_{j}) - \kappa \geq 2 \delta/M_{\gamma} - \kappa >0, $$
where the last inequality follows from the definition of $\delta$ and $\kappa$. Therefore,
$$
| I^r |  \leq \; \delta \left|   (\mathcal{B}^r \ast \mathcal{V}^r)  (\bigcup^{N-1}_{j=0} C_j) \alpha \right| + N \varepsilon_1.
$$
Since $ \mathcal{B}^r \ast \mathcal{V}^r$ is a positive finite measure, then if we choose $\kappa, \delta<1$ small, we get
$$
| I^r |  \leq \; 2 N \varepsilon_1 \leq \varepsilon/2.
$$
\end{proof}

\subsubsection{Oscillation bound} \label{presub7}~

 In this section, we establish the oscillation bound result, which is the second major step to prove the precompactness. For  $L>1$, $\zeta (\cdot) \in \mathbf{D}([0,\infty), \mathcal{M}^K)$ and $\delta > 0$, we define the modulus of continuity of $\zeta (\cdot)$ on $[0 , L ]$ as
$$
	\mathbf{w}_{L} (\zeta (\cdot)
	 , \delta) = \, \sup_{s,t \in [0,L ], \lvert s-t \rvert < \delta}  \mathbf{d}
	    \left(\zeta(s) , \zeta(t)\right).
$$

\begin{lemma} \label{Os lm}
Assume \eqref{eq:assum1}-\eqref{eq:assum10} and \eqref{eq:assum-init1}-\eqref{eq:assum-init4}. Fix $T>0,~ L>1$. For each $\varepsilon, \eta >0$, there exists a $\delta > 0$ such that 
\begin{align}
\liminf_{r \rightarrow \infty} \mathbb{P}^r 
\left(\max_{m \leq \lfloor rT \rfloor } \mathbf{w}_{ L } (\bar{\mathcal{Q}}^{r,m} (\cdot)
	 , \delta) \leq \varepsilon \right) \geq 1- \eta \label{OS}
\end{align}
\end{lemma}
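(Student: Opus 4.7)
The plan is to adapt the framework developed for single-class processor sharing (\cite{gromoll2004diffusion}, \cite{zhang2011diffusion}) via the shifted dynamic equation \eqref{eq:shifted-dynamic-equation2} together with a case analysis on the cumulative service increment. First I will introduce a high-probability event $\Omega^r$, obtained as the intersection of: the uniform WLLN event \eqref{glika} with tolerance $\varepsilon/8$; the mass bound \eqref{const:M1} for $\bar{\gamma}^{r,m}$ with constant $M_{\gamma}$; the total mass and $(1+q)$-moment bound \eqref{event:moment} for $\bar{\mathcal{Q}}^{r,m}$ with constants $M_T$ and $\widetilde{M}_{T,L}$; and the asymptotic regularity event of Lemma \ref{as lm} applied with tolerance $\varepsilon/8$, yielding a $\kappa>0$ such that $\sup_x \bar{\mathcal{Q}}^{r,m}_k(u)([x,x+\kappa]) \leq \varepsilon/8$ uniformly in $m,u,k$. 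By the cited results, $\liminf_r \mathbb{P}^r(\Omega^r) \geq 1-\eta$. It then suffices to show on $\Omega^r$ that $\boldsymbol\rho(\bar{\mathcal{Q}}^{r,m}_k(s), \bar{\mathcal{Q}}^{r,m}_k(t)) \leq \varepsilon$ uniformly over $m \leq \lfloor rT\rfloor$, $k \in \mathcal{K}$ and $s<t$ in $[0,L]$ with $t-s<\delta$, for a suitable $\delta$.

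Setting $\bar{S}:=\bar{S}^{r,m}(s,t)$ and applying \eqref{eq:shifted-dynamic-equation2} with $g=1_B$ yields, for any closed $B \subset \mathbb{R}_+$, the identity $\bar{\mathcal{Q}}^{r,m}_k(t)(B) = \bar{\mathcal{Q}}^{r,m}_k(s)(B+\bar{S}) + \mathcal{N}^{r,m}_k(B)$, with $|\mathcal{N}^{r,m}_k(B)| \leq (t-s)|\lambda|+\varepsilon/8 < \varepsilon/4$ on $\Omega^r$ once $\delta$ is small. Applying the same equation to $B' = (B-\bar{S}) \cap [0,\infty)$ gives the dual estimate $\bar{\mathcal{Q}}^{r,m}_k(s)(B \cap [\bar{S},\infty)) \leq \bar{\mathcal{Q}}^{r,m}_k(t)(B-\bar{S})$. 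In the small-shift regime $\bar{S} \leq \kappa \wedge \varepsilon/2$, both inclusions $B+\bar{S} \subset B^\varepsilon$ and $B-\bar{S} \subset B^\varepsilon$ hold, and the leftover mass $\bar{\mathcal{Q}}^{r,m}_k(s)([0,\bar{S})) \leq \bar{\mathcal{Q}}^{r,m}_k(s)([0,\kappa]) \leq \varepsilon/8$ by asymptotic regularity; the two Prokhorov inequalities defining $\boldsymbol\rho \leq \varepsilon$ then follow immediately.

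The principal obstacle is the large-shift case $\bar{S} > \kappa \wedge \varepsilon/2$. For very large shifts $\bar{S} \geq L' := (8\widetilde{M}_{T,L}/\varepsilon)^{1/(1+q)}$, Markov's inequality applied to the $(1+q)$-moment gives $\bar{\mathcal{Q}}^{r,m}_k(s)(B+\bar{S}) \leq \bar{\mathcal{Q}}^{r,m}_k(s)([\bar{S},\infty)) \leq \varepsilon/8$, handling the forward inequality; the reverse inequality uses the interpretation that atoms of $\bar{\mathcal{Q}}^{r,m}_k(s)$ in $[0,\bar{S})$ correspond to class-$k$ visits completed during $[s,t]$ by jobs present at time $s$, so via \eqref{eq:d-N-Q} and the uniform WLLN for $\mathscr{N}^r$ this mass is bounded by $(t-s)|\lambda|+\varepsilon/8 < \varepsilon/4$. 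The delicate intermediate range $\kappa \wedge \varepsilon/2 < \bar{S} < L'$ is handled by observing that $\bar{S} = \int_s^t du/\langle 1,\bar{\gamma}^{r,m}(u)\rangle$ being bounded below by $\kappa$ over an interval of length less than $\delta$ forces $\inf_{[s,t]}\langle 1,\bar{\gamma}^{r,m}\rangle \leq \delta/\kappa$; since the number-of-jobs process has $O(\delta)$-increments on $\Omega^r$ (from the WLLN for $\bar{E}^r$ and $\bar{D}^r$), this bound extends to a uniform upper bound on $\sup_{[s,t]}\langle 1,\bar{\gamma}^{r,m}\rangle$. The hard part is then establishing a pre-limit analog of the fluid identity \eqref{glob_mathcal_Q} that transfers this smallness of $\bar{\gamma}^{r,m}$ to $\bar{\mathcal{Q}}^{r,m}$, forcing both $\bar{\mathcal{Q}}^{r,m}_k(s)$ and $\bar{\mathcal{Q}}^{r,m}_k(t)$ to have total mass $\leq \varepsilon$, whence their Prokhorov distance is trivially $\leq \varepsilon$. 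Choosing $\delta$ small enough so that all the thresholds above are simultaneously respected yields \eqref{OS}.
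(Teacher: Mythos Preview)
Your case decomposition by the size of the shift $\bar S$ diverges from the paper's and creates a gap you do not close. The paper does not split into small/very large/intermediate shift regimes; instead it splits according to whether the set $I=\{u\in[s,t]: e\cdot\bar Z^{r,m}(u)=0\}$ is empty. If $I=\emptyset$, the (sample-path) positive lower bound $m_\gamma$ on $e\cdot\bar Z^{r,m}$ over $[s,t]$ forces $\bar S^{r,m}(s,t)\le \delta/m_\gamma\le\kappa\wedge\varepsilon$, and one is automatically in your small-shift regime. If $I\neq\emptyset$, the paper uses the exact identity $e\cdot\bar Z^{r,m}(\tau)=0\Rightarrow \bar{\mathcal Q}^{r,m}(\tau)=\mathbf 0$ (no jobs implies no remaining visits), which immediately gives $\bar{\mathcal Q}^{r,m}_k(s)(I(\bar S^{r,m}(s,\tau)))=0$ via the dynamic equation, and hence both Prokhorov inequalities with only the asymptotic-regularity and WLLN bounds. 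There is no intermediate case to handle.

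Your Case~3 is exactly where your proposal breaks down. You correctly observe that $\bar S>\kappa$ on an interval of length $<\delta$ forces $\inf_{[s,t]}\langle 1,\bar\gamma^{r,m}\rangle\le\delta/\kappa$, but the step you label ``the hard part'' --- producing a pre-limit analogue of $\langle 1,\bar{\mathcal Q}\rangle=Q\bar Z$ to transfer this smallness to $\langle 1,\bar{\mathcal Q}^{r,m}\rangle$ --- is not supplied and is not available from the results proved in the paper at this stage. A single job can carry an unbounded random number of future class-$k$ visits, so smallness of $\langle 1,\bar\gamma^{r,m}\rangle$ does not by itself control $\langle 1,\bar{\mathcal Q}^{r,m}_k\rangle$ without a separate large-deviations or WLLN estimate on $\sum N^{r,l}_k$ over the few jobs present, which you have not set up. Your Case~2 reverse inequality has a related issue: bounding $\bar{\mathcal Q}^{r,m}_k(s)([0,\bar S))$ via \eqref{eq:d-N-Q} is circular, since $D^r_k=\mathscr N^r_k-\mathbf Q^r_k$ reintroduces $\langle 1,\bar{\mathcal Q}^{r,m}_k\rangle$ on the right-hand side. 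The paper sidesteps both difficulties by exploiting that the queue, being integer-valued, either stays uniformly away from zero or hits zero exactly; in the latter case $\bar{\mathcal Q}^{r,m}$ vanishes identically at $\tau$, which is a much stronger statement than ``small'' and requires no comparison between $\bar\gamma$ and $\bar{\mathcal Q}$.
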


\begin{proof}
Define the event
$
\Omega^r_{As} =\left \lbrace  \max_{m \leq \lfloor rT \rfloor} \,  \left\| \,  \bar{\mathcal{Q}}^{r,m} (\cdot)  ([0,\kappa])  \, \right\|_L \leq \varepsilon /4 \right \rbrace.
$
According to Lemma \ref{as lm},  for each $\varepsilon,\eta>0$, there exists a 
$\kappa>0$ such that
\begin{align}
\liminf_{r \rightarrow \infty} \,
\mathbb{P}^r \left( \Omega^r_{As} \right) \geq 1-\eta. \label{AS0} 
\end{align}
From \eqref{glika} and \eqref{AS0}, we have
$
\liminf_{r \rightarrow \infty} \,
\mathbb{P}^r \left( \Omega^r_{LN} \cap \Omega^r_{As}   \right) \geq 1-\eta. 
$
Fix $r>0$ and let $\omega $ be a fixed sample path in
$\displaystyle{ \Omega^r_{As} \cap \Omega^r_{LN} } $.
For the remainder of the proof  all random quantities with index $r$ are evaluated at this $\omega$. 
Fix $m \leq \lfloor rT \rfloor$ and define
\begin{gather*}
I:=\lbrace u \in [0,L]: e. \bar{Z}^{r,m}(u)=0 \rbrace \\
m_{\gamma}:= \inf  \{ e. \bar{Z}^{r,m}(u): ~ u \in [0,L]\setminus I  \}.
\end{gather*}
Choose $ \delta= \min \left\lbrace  m_{\gamma} \kappa, m_{\gamma} \varepsilon, \varepsilon/4|\lambda| \right\rbrace $ and 
consider $0 \leq s < t \leq L$ with
$t-s < \delta$. Fix $k \in \mathcal{K}$ and let $B \subset \mathbb{R}^+$ a closed Borel set.  By definition of the  metric $\mathbf{d} (\cdot,\cdot)$, it
suffices to show  the following two inequalities 
\begin{align}
  \bar{\mathcal{Q}}^{r,m}_k (s) (B) & \leq  \bar{\mathcal{Q}}^{r,m}_k (t) (B^{\varepsilon}) +  \varepsilon, \label{fi} \\ 
  \bar{\mathcal{Q}}^{r,m}_k(t) (B) & \leq  \bar{\mathcal{Q}}^{r,m}_k(s) (B^{\varepsilon}) +  \varepsilon. \label{se}
\end{align} 
\vspace{0.15cm}
\textbf{Case 1:} If $I \cap [s,t] \neq \oslash$, then $e.\bar{Z}^{r,m}(u)>0$ on the interval $[s,t]$.
By definition of the event $\Omega_{As}^r$, we have
\begin{align}
\bar{\mathcal{Q}}^{r,m}_k (s) (B)  \leq \bar{\mathcal{Q}}^{r,m}_k (s) ([0,\kappa]) + \bar{\mathcal{Q}}^{r,m}_k (s) (B \cap I( \kappa))  \leq  \varepsilon /4 + \bar{\mathcal{Q}}^{r,m}_k (s)(B \cap I( \kappa)). \label{eqelee}
\end{align} 
Considering the definitions of $m_{\gamma}$ and $\delta$, we obtain
\begin{equation}
\bar{S}^{r,m}(s,t) \leq \dfrac{(t-s)}{m_{\gamma}} \leq \dfrac{ \delta}{m_{\gamma}} \leq \min \lbrace \kappa , \varepsilon \rbrace. \label{cdd}
\end{equation}
Consequently, 
 $ B \cap I( \kappa) \subset B \subset B^{\varepsilon} + \bar{S}^{r,m} (s,t).$
 From \eqref{eqelee} and by applying \eqref{eq:shifted-dynamic-equation2} with $t'=s, \, h=t-s, \, g=1_{ B^{\varepsilon} }$ , we obtain
$$
\bar{\mathcal{Q}}^{r,m}_k (s) (B)  \leq  \dfrac{\varepsilon}{4} + \bar{\mathcal{Q}}^{r,m}_k (s) (B^{\varepsilon} + \bar{S}^{r,m} (s,t))  \leq \varepsilon + \bar{\mathcal{Q}}^{r,m}_k (t) (B^{\varepsilon}).
$$
Therefore \eqref{fi} is proven.  
\par Let us show \eqref{se}. 
By using \eqref{eq:shifted-dynamic-equation2} with $t'=s$ and $h=t-s$, and by bounding the the second term by its total mass, we get
\begin{align*}
\bar{\mathcal{Q}}^{r,m}_k (t) (B)  & \leq \bar{\mathcal{Q}}^{r,m}_k (s) (B + \bar{S}^{r,m}(s,t)) + \dfrac{1}{r} \sum_{l=1}^K \sum_{i= r \bar{E}^{r}_l (s) + 1 }^{ r \bar{E}^{r}_l (t)} \, \langle 1 , \vartheta^r_{lk} (i) \rangle \\
 & \leq \bar{\mathcal{Q}}^{r,m}_k (s) (B^{\varepsilon}) +
  \dfrac{\varepsilon}{2}, 
\end{align*} 
where the second inequality above comes from the fact that $(\ref{cdd})$ implies $ B + \bar{S}^{r,m} (s,t) \subset B^{\varepsilon}  $, and by definition of $\delta$. Therefore \eqref{se} is proven.

\vspace{0.2cm}
\textbf{ Case 2: $I \cap [s,t] \neq \oslash$.}  Let $\tau= \inf I \cap [s,t]$, consequently, $e.\bar{Z}^{r,m}(u)>0$ on the interval $[s,\tau)$.
Therefore,  
$$ \bar{S}^{r,m}(s,\tau) \leq \frac{\delta}{m_{\gamma}} < \kappa. $$
Given that we are in the event $\Omega^r_{As}$, we have
\begin{align*}
\bar{\mathcal{Q}}^{r,m}_k (s) (B)   \leq \bar{\mathcal{Q}}^{r,m}_k (s)([0, \kappa]) +  \bar{\mathcal{Q}}^{r,m}_k (s)(I(\kappa)) 
  \leq \varepsilon/4+ \bar{\mathcal{Q}}^{r,m}_k (s) (I(\bar{S}^{r,m}(s,\tau))).
\end{align*}
By using \eqref{eq:shifted-dynamic-equation2} with $g=1_{(0, \infty)}$, $t'=s$ and $h=\tau-s$, and by using the fact that $\langle 1, \bar{\mathcal{Q}}^{r,m}_k(\tau) \rangle=0$, we get that 
$$ \bar{\mathcal{Q}}^{r,m}_k (s) (I(\bar{S}^{r,m}(s,\tau)))=0.$$
Hence, 
$ \bar{\mathcal{Q}}^{r,m}_k (s) (B) \leq \varepsilon/4.$ Therefore \eqref{fi} is proven.
Let us establish \eqref{se}. Once again, by  using \eqref{eq:shifted-dynamic-equation2} with $g=1_B$, $t'=\tau$ and $h=t-\tau$, and by bounding the second term by its total mass, we obtain
\begin{align*}
\bar{\mathcal{Q}}^{r,m}_k (t) (B)  & \leq \bar{\mathcal{Q}}^{r,m}_k (\tau) (B+ \bar{S}^{r,m}(\tau,t)) +  \dfrac{1}{r} \sum_{l=1}^K \sum_{i= r \bar{E}^{r}_l (s) + 1 }^{ r \bar{E}^{r}_l (t)} \, \langle 1 , \vartheta^r_{lk} (i) \rangle   \\
 & \leq 2 \delta |\lambda| \leq \varepsilon/2 ,
\end{align*}
where the second inequality follows from the fact that $\langle 1, \bar{\mathcal{Q}}^{r,m}_k (\tau) \rangle=0$ and the condition that we are within the event $\Omega^r_{LN}$, while the last one stems from the definition of $\delta$. Therefore \eqref{se} is proven.
\end{proof}

\subsubsection{Precompactness} \label{presub8}~ 

Building on the findings from earlier sections, we define an event on which any sequence of simple paths of the shifted scaled process $\bar{\mathcal{Q}}^{r,m} (\cdot)$ is relatively compact.

\par By Lemma $\ref{Os lm}$, for each $n \in \mathbb{N}$ and for all $\eta>0$, there exists $\mathcal{C}=\lbrace (\delta_k) \rbrace^{\infty}_{k=1}$ such that
$$
\liminf_{r \rightarrow \infty} \, \mathbb{P}^r \big(\bigcap_{k=1}^{n}  \Omega^r (k,\mathcal{C})\big) \geq 1-\eta, 
$$
where $
\Omega^r (k,\mathcal{C})= \left \lbrace \max_{m \leq \lfloor rT \rfloor}  \mathbf{w}_L ( \bar{\mathcal{Q}}^{r,m} (\cdot), \delta_k)  \leq 1/k \right \rbrace.
$
Then, for each $n \in \mathbb{N}$  there exists a $r(n) \geq 0$ such that
$$
\mathbb{P}^r \big(\bigcap_{k=1}^{n}  \Omega^r (k,\mathcal{C})\big) \geq 1-\eta \quad \forall r>r(n). 
$$
Define 
$
 n(r)=
\sup \lbrace n\in \mathbb{N}: r(n) <r \rbrace \text{ if } r>r(1)$ and $0$ otherwise; 
and let
$ \Omega^r(\mathcal{C})= \bigcap_{k=1}^{n(r)} \Omega^r (k,\mathcal{C}). 
$
On one hand  $n(r)$ goes to infinity as $r \rightarrow \infty$. On the other hand the set $\Omega^r (\mathcal{C})$ is non empty since $n(r)>1$  for large $r$. This implies
\begin{align} \label{nr} 
\liminf_{r \rightarrow \infty} \mathbb{P}^r (\Omega^r (\mathcal{C})) \geq 1-\eta.
\end{align}

As previously demonstrated in Section \ref{Singleclass}, the processor $\gamma^r$ behaves as a single class processor sharing queue. Consequently, according to  \cite[Lemma 4.4]{gromoll2004diffusion}, there exists an event $\Omega^r_{\Gamma}$ such that
\begin{align} \label{ev_Omg}
\liminf_{r \rightarrow \infty} \mathbb{P}^r (\Omega^r_{\Gamma}) \geq 1-\eta,
\end{align}
where the process
  $\lbrace \bar{\gamma}^{r,m} (\cdot), m \leq \lfloor rT \rfloor \rbrace$ is uniformaly approximated  on $[0,L]$ by a certain process
 $\tilde{\gamma}(\cdot)$ which is a fluid solution of \eqref{eq:bar-gamma}. Let $(r_n)_{n \in  \mathbb{N}}$ a sequence on $\mathbb{R_+}$ which goes to infinity, we have 
\begin{align}
\label{unif_gam}
\lim_{n \rightarrow \infty} \sup_{t \in [0,L]}
\max_{m_n \leq \lfloor r_n T \rfloor} \boldsymbol\rho \, \left( \bar{\gamma}^{r_n,m_n} (\omega,t) , \tilde{\gamma} (t) \right) = 0, \\
\label{unif_W}
\lim_{n \rightarrow \infty} \sup_{t \in [0,L]}
\max_{m_n \leq \lfloor r_n T \rfloor}  \left \lvert \bar{W}^{r_n,m_n} (\omega,t) - \tilde{W} (t) \right \rvert = 0,
\end{align}
where $ \tilde{W} (t) = \langle \chi,\tilde{\gamma}(t) \rangle $. 
Let $\Omega^r_{CC}$ denote the event in   
\eqref{event:moment} and let $
\Omega^r_F=\Omega^r_{LN} \cap \Omega^r _{CC} \cap \Omega^r (\mathcal{C}) \cap \Omega^r_{\Gamma}
$. From \eqref{glika},  
\eqref{event:moment} and \eqref{nr}-\eqref{ev_Omg}, It follows that
\begin{equation}
\liminf_{r \rightarrow \infty} \mathbb{P}^r (\Omega^r_F) \geq 1-\eta .  \label{comp}
\end{equation}
Denote by $\mathcal{D}^r_L$ the set of $ \xi^r (\cdot) \in \mathbf{D} ( \, [0,L], \mathcal{M}^K)$ such that $\xi^r (\cdot) \equiv   \bar{\mathcal{Q}}^{r,m}(\omega,\cdot)$ with 
$\omega \in \Omega^{r}_F$ and $m \leq \lfloor r T \rfloor$. 
Now, we  we present the precompactness result of $\bar{\mathcal{Q}}^{r,m} (\cdot)$, as derived from \cite[Theorem 3.6.3]{ethier1986markov}.

\begin{theorem} 
\label{preco}
Fix $T >0 $ and assume \eqref{eq:assum1}-\eqref{eq:assum10} and \eqref{eq:assum-init1}-\eqref{eq:assum-init4}. Let $(r_n)_{n \in  \mathbb{N}}$ a sequence on $\mathbb{R_+}$ approaching infinity. Any sequence $\left(\xi^{r_n} (\cdot)\right)_{n\in \mathbb{N}}$ such that  $\xi^{r_n} (\cdot) \in \mathcal{D}^{r_n}_L$ for each $n \in \mathbb{N}$, is precompact in $\mathbf{D} ( \, [0,L], \mathcal{M}^K)$. Furthermore, any subsequence  $\left(\xi^{r_{n_{i}}}(\cdot) \right)_{i\in \mathbb{N}}$ converges uniformly to a continuous process $\tilde{\mathcal{Q}}(\cdot)$. In other words
\begin{align}
\lim_{i \rightarrow \infty} \, \sup_{0 \leq t \leq L} \, \mathbf{d} \left(\xi^{r_{n_{i}}} (t),\tilde{\mathcal{Q}}(t) \right) = 0. \label{metd1}
\end{align} 
\end{theorem}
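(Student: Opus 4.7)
The plan is to apply the general relative compactness criterion for $\mathbf{D}([0,L],\mathcal{M}^K)$ (Theorem 3.6.3 of \cite{ethier1986markov}) pathwise on the high-probability event $\Omega^r_F$. That criterion requires two ingredients: a compact containment condition, namely that for each $t\in[0,L]$ the values $\xi^{r_n}(t)$ all lie in a fixed compact subset of $\mathcal{M}^K$; and control of the modulus of continuity, i.e.\ that for every $\varepsilon>0$ there exists $\delta>0$ such that $\mathbf{w}_L(\xi^{r_n}(\cdot),\delta)\leq\varepsilon$ for all sufficiently large $n$. Both ingredients have essentially been prepared in Sections \ref{presub5}--\ref{presub7}; the theorem is obtained by assembling them deterministically on $\Omega^r_F$.

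First, every element of $\mathcal{D}^{r_n}_L$ is of the form $\bar{\mathcal{Q}}^{r_n,m_n}(\omega_n,\cdot)$ with $\omega_n\in\Omega^{r_n}_F$ and $m_n\leq\lfloor r_n T\rfloor$, so the estimates derived in the preceding sections hold deterministically. The compact containment follows because $\Omega^{r_n}_F\subset\Omega^{r_n}_{CC}$, so $\langle 1,\xi^{r_n}(t)\rangle\vee\langle\chi^{1+q},\xi^{r_n}(t)\rangle\leq M_{T,L}$ uniformly in $t\in[0,L]$; together with Markov's inequality, this forces $\xi^{r_n}(t)$ to belong to the relatively compact set $\mathbb{K}_{T,L}$ of Lemma \ref{cocn} (applied componentwise to produce a compact subset of $\mathcal{M}^K$ via \cite[Theorem A.7.5]{kallenberg1986random}).

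Second, the modulus-of-continuity control comes from the construction of $\Omega^r(\mathcal{C})$ combined with the fact that $n(r_n)\to\infty$. Given $\varepsilon>0$, pick $k$ with $1/k<\varepsilon$; then for all $n$ large enough that $n(r_n)\geq k$ the inclusion $\omega_n\in\Omega^{r_n}_F\subset\Omega^{r_n}(k,\mathcal{C})$ yields $\mathbf{w}_L(\xi^{r_n}(\cdot),\delta_k)\leq 1/k\leq\varepsilon$. Applying Theorem 3.6.3 of \cite{ethier1986markov} with these two inputs gives precompactness of $(\xi^{r_n}(\cdot))_{n\geq 1}$ in $\mathbf{D}([0,L],\mathcal{M}^K)$ equipped with the Skorokhod $J_1$-topology. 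Moreover, since the same bound shows $\mathbf{w}_L(\xi^{r_n}(\cdot),\delta)\to 0$ uniformly in $n$ as $\delta\to 0$, any Skorokhod limit point $\tilde{\mathcal{Q}}(\cdot)$ must have vanishing modulus of continuity on $[0,L]$, hence is continuous. On the subspace $\mathbf{C}([0,L],\mathcal{M}^K)$ the Skorokhod $J_1$-topology coincides with the topology of uniform convergence, so any Skorokhod-convergent subsequence $\xi^{r_{n_i}}(\cdot)$ in fact satisfies \eqref{metd1}.

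The proof is essentially bookkeeping once the heavy lifting of Lemmas \ref{cocn}, \ref{as lm} and \ref{Os lm} is in hand; the only mild subtlety is verifying that the Ethier--Kurtz criterion, which is stated for a single Polish-valued process, applies directly to the vector-valued situation by equipping $\mathcal{M}^K$ with the metric $\mathbf{d}$ (which is precisely the metric used in the definition of $\mathbf{w}_L$). The principal conceptual obstacle, namely the equicontinuity of the pre-limit paths uniformly in the shift parameter $m\leq\lfloor rT\rfloor$, has already been absorbed into the construction of the deterministic sequence $\mathcal{C}=(\delta_k)_{k\geq 1}$ and the exhaustion index $n(r)$.
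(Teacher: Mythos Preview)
Your proposal is correct and follows exactly the paper's approach: the paper simply states that Theorem~\ref{preco} is ``derived from \cite[Theorem 3.6.3]{ethier1986markov}'' after constructing the event $\Omega^r_F=\Omega^r_{LN}\cap\Omega^r_{CC}\cap\Omega^r(\mathcal{C})\cap\Omega^r_{\Gamma}$, and your argument spells out precisely how that derivation goes---compact containment via $\Omega^r_{CC}$ and Lemma~\ref{cocn}, equicontinuity via $\Omega^r(\mathcal{C})$ and $n(r_n)\to\infty$, and continuity of the limit forcing Skorokhod convergence to be uniform.
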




\subsection{ State space collapse proof}~ 
\label{sec:ssc}

In this section, we prove the state space collapse stated in Theorem \ref{ssc2}. For the rest of this paper, $\omega$ is fixed on the event $\Omega^r_F$.
\par We consider the multiclass fluid model defined in Section \ref{sect:fluid-mod} associated with critical data $(\alpha,\nu,P)$ and initial state $\xi \in \mathcal{M}^{c,K}$. Let $\widetilde{\mathcal{D}}_{L}$ be the set of the convergent
subsequences in  Theorem \ref{preco}, which is non-empty.
The following lemma establishes that the elements of this set are fluid solutions to equation \eqref{fluid-mathcal-Q}. 

\begin{lemma} \label{lm1} 
Assume \eqref{eq:assum1}-\eqref{eq:assum10} and \eqref{eq:assum-init1}-\eqref{eq:assum-init4}. Let $q$ the constant defined in \eqref{const-q}. Fix $L>1$ and $T>1$, any element $\widetilde{\mathcal{Q}}(\cdot) \in \widetilde{\mathcal{D}}_{L}$ is a fluid  solution of the equation \eqref{fluid-mathcal-Q}.
\end{lemma}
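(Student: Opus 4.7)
The plan is to pass to the limit along the convergent subsequence $(r_{n_i}, m_{n_i})$ in the shifted fluid-scaled dynamic equation \eqref{eq:shifted-dynamic-equation2}, taken with $g = 1_{I(x)}$, $t' = 0$, $h = t$, and to show that the resulting limit equation coincides with \eqref{fluid-mathcal-Q} (with the appropriate initial state). Thus, starting from
\begin{equation*}
\bar{\mathcal{Q}}^{r,m}(t)(I(x)) = \bar{\mathcal{Q}}^{r,m}(0)\bigl(I(x+\bar{S}^{r,m}(t))\bigr) + \frac{1}{r}\sum_{l=1}^{K}\sum_{i=1}^{r\bar{E}^{r,m}_l(t)} \bigl\langle 1_{I(x+\bar{S}^{r,m}(U_l^r(i)/r-m,\,t))},\,\vartheta^{r,l}(i)\bigr\rangle,
\end{equation*}
the three ingredients to be handled are: (a) the limit of the initial-state term; (b) the limit of the cumulative service $\bar{S}^{r,m}$; and (c) the limit of the arrival sum in the right-hand side.

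For (b), the event $\Omega^r_\Gamma$ provides the uniform convergence $\bar{\gamma}^{r_{n_i},m_{n_i}}(\cdot) \to \tilde{\gamma}(\cdot)$ on $[0,L]$ (cf.\ \eqref{unif_gam}). Since $\langle 1,\bar{\gamma}^{r,m}\rangle = e\cdot\bar{Z}^{r,m}$, the cumulative service satisfies $\bar{S}^{r,m}(t) = \int_0^t \varphi(\langle 1,\bar{\gamma}^{r,m}(s)\rangle)\,ds$; splitting into the regime where the limiting total mass stays bounded below (where the argument used to produce $m_\gamma$ in the proof of Lemma \ref{Os lm} applies) and the regime where it vanishes (treated via \eqref{eq:solution_nul}), one obtains $\bar{S}^{r_{n_i},m_{n_i}} \to \bar{S}$ uniformly on $[0,L]$. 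For (a), the convergence \eqref{metd1} of $\bar{\mathcal{Q}}^{r_{n_i},m_{n_i}}(0)$ to $\widetilde{\mathcal{Q}}(0)$, combined with the atomless property of the relevant limiting measures inherited from $\nu^0$ via Lemma \ref{lem:conv-zeta-r}, yields
\begin{equation*}
\bar{\mathcal{Q}}^{r_{n_i},m_{n_i}}(0)\bigl(I(x+\bar{S}^{r_{n_i},m_{n_i}}(t))\bigr) \longrightarrow \widetilde{\mathcal{Q}}(0)\bigl(I(x+\bar{S}(t))\bigr).
\end{equation*}

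For (c), the strategy is to fix a partition $0 = t_0 < t_1 < \cdots < t_N = t$ of mesh $\delta$, replace on each block $[t_j, t_{j+1}]$ the moving shift $\bar{S}^{r,m}(U_l^r(i)/r - m,\,t)$ by the constant $\bar{S}^{r,m}(t_j, t)$, and apply Proposition \ref{GLVLM} to the resulting piecewise constant test function. This gives, blockwise,
\begin{equation*}
\frac{1}{r}\sum_{l=1}^{K}\sum_{i=r\bar{E}^{r,m}_l(t_j)+1}^{r\bar{E}^{r,m}_l(t_{j+1})} \bigl\langle 1_{I(x+\bar{S}^{r,m}(t_j,t))},\vartheta^{r,l}(i)\bigr\rangle \,\approx\, (t_{j+1}-t_j)\,\mathcal{B}^r\ast\mathcal{V}^r\bigl(I(x+\bar{S}^{r,m}(t_j,t))\bigr)\alpha,
\end{equation*}
and summing over $j$, using \eqref{eq:b-ast-nu3} together with the uniform convergence of $\bar{S}^{r,m}$, produces a Riemann sum that converges, as $\delta \to 0$, to $\int_0^t \mathcal{B}\ast\mathcal{V}(I(x+\bar{S}(s,t)))\,ds\,\alpha$ by continuity of $\mathcal{B}\ast\mathcal{V}$ (atomless, from Lemma \ref{lem:conv-zeta-r}).

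The main obstacle is the error incurred in (c) by replacing the moving shift, indexed by $i$ through $U_l^r(i)/r$, with a block-constant shift. This error must vanish uniformly in $r$ as $\delta \to 0$; it is controlled by combining the Lipschitz bound $|\bar{S}^{r,m}(s,t)-\bar{S}^{r,m}(s',t)| \leq |s-s'|/m_\gamma$ with the asymptotic regularity result (Lemma \ref{as lm}) transferred to the arrival measure $\mathcal{B}^r\ast\mathcal{V}^r$ via the moment bound \eqref{mom:4+theta} and Markov's inequality, so that perturbations of order $\delta/m_\gamma$ in the argument of $1_{I(\cdot)}$ move only $O(\delta)$ mass. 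Once this uniform control is secured, passing to the limit along the subsequence in the three terms simultaneously establishes that $\widetilde{\mathcal{Q}}(\cdot)$ satisfies the fluid equation \eqref{fluid-mathcal-Q} (with initial state $\widetilde{\mathcal{Q}}(0)$), completing the proof.
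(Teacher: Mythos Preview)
The paper itself omits the proof of Lemma~\ref{lm1}, stating only that it ``closely resembles arguments from the literature for single-class systems.'' Your proposal is precisely such an argument: pass to the limit in the shifted dynamic equation \eqref{eq:shifted-dynamic-equation2}, using (i) uniform convergence of $\bar{\gamma}^{r,m}$ on $\Omega^r_\Gamma$ to control $\bar{S}^{r,m}$, (ii) weak convergence of $\bar{\mathcal{Q}}^{r,m}(0)$ together with atomlessness for the initial term, and (iii) a partition-plus-WLLN argument (Proposition~\ref{GLVLM}) combined with asymptotic regularity (Lemma~\ref{as lm}) for the arrival sum. This is exactly the template from \cite{gromoll2004diffusion,zhang2011diffusion} that the paper is pointing to, so your approach is the intended one.

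Two small points. First, in your displayed starting equation the arrival sum should run from $i = r\bar{E}^{r,m}_l(0)+1$ to $r\bar{E}^{r,m}_l(t)$, not from $i=1$; the lower limit $r\bar{E}^{r,m}_l(0) = r\bar{E}^r_l(m)$ is what makes the equation consistent with the initial term $\bar{\mathcal{Q}}^{r,m}(0)$, and it is also what allows Proposition~\ref{GLVLM} (which is stated for increments over $[s,t]\subset[0,L+1]$) to apply directly. Second, the limiting equation you obtain has initial state $\widetilde{\mathcal{Q}}(0)$ rather than the specific $\mathcal{B}\ast\mathcal{V}^0\,\bar{Z}(0)$ appearing in \eqref{fluid-mathcal-Q}; to invoke Proposition~\ref{asmp} downstream one must also note (via the moment bounds on $\Omega^r_{CC}$ and the coupling $\langle 1,\tilde{\gamma}\rangle = e\cdot\langle 1,\tilde{\mu}\rangle$ that fixes $\bar{S}$) that $\widetilde{\mathcal{Q}}(0)$ is of the form $\mathcal{B}\ast\xi$ for some $\xi\in\mathsf{B}^q_N$. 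The paper leaves this implicit as well, but it is worth flagging in a complete write-up.
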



\par The following lemma  states that for sufficiently large $r$, any shifted fluid scaled process $\bar{\mathcal{Q}}^{r,m}(\cdot)$ evaluated at a some sample path in $\Omega^r_F$ is uniformly approximated on $[0,L]$ by some element of $\widetilde{\mathcal{D}}_{L}$. 

\begin{lemma} \label{lmf} Assume \eqref{eq:assum1}-\eqref{eq:assum10} and \eqref{eq:assum-init1}-\eqref{eq:assum-init4}. Fix $L>1$. For any $\xi^{r} \in  \mathcal{D}^{r}_L$ there exists  $\widetilde{\mathcal{Q}}(\cdot) \in \widetilde{\mathcal{D}}_{L}$, such that
\begin{align*}
\lim_{r \rightarrow \infty}  \, \sup_{t \in [0,L]} \mathbf{d}\left( \xi^{r}(t) , \widetilde{\mathcal{Q}} (t) \right) = 0.
\end{align*}
\end{lemma}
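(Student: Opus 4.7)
The plan is to argue by contradiction, exploiting the precompactness established in Theorem \ref{preco} together with the identification of subsequential limits as fluid solutions provided by Lemma \ref{lm1}.

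Suppose the claim fails. Then there exists $\varepsilon_0 > 0$ and a sequence $r_n \to \infty$ together with a choice $\xi^{r_n}(\cdot) \in \mathcal{D}^{r_n}_L$ for each $n$ such that
\[
\sup_{t \in [0,L]} \mathbf{d}\bigl(\xi^{r_n}(t), \widetilde{\mathcal{Q}}(t)\bigr) \geq \varepsilon_0 \quad \text{for every } \widetilde{\mathcal{Q}}(\cdot) \in \widetilde{\mathcal{D}}_L \text{ and every } n.
\]
By Theorem \ref{preco} the sequence $(\xi^{r_n}(\cdot))$ is precompact in $\mathbf{D}([0,L],\mathcal{M}^K)$ and hence admits a subsequence $(\xi^{r_{n_i}}(\cdot))$ converging uniformly on $[0,L]$ to some continuous process $\widetilde{\mathcal{Q}}^*(\cdot)$. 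By definition of $\widetilde{\mathcal{D}}_L$ this limit belongs to $\widetilde{\mathcal{D}}_L$, and Lemma \ref{lm1} moreover confirms that $\widetilde{\mathcal{Q}}^*(\cdot)$ is a fluid model solution of \eqref{fluid-mathcal-Q}. But then
\[
\sup_{t \in [0,L]} \mathbf{d}\bigl(\xi^{r_{n_i}}(t), \widetilde{\mathcal{Q}}^*(t)\bigr) \longrightarrow 0 \quad \text{as } i \to \infty,
\]
which contradicts the assumption. Hence the desired uniform approximation holds.

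The only delicate point is verifying that the limiting process singled out along the subsequence does lie in $\widetilde{\mathcal{D}}_L$ with the \emph{right} sample-path structure — i.e.\ that the precompactness in Theorem \ref{preco} was formulated on the event $\Omega^r_F$ with $\omega$ fixed, and that the convergent subsequences of Theorem \ref{preco} are precisely what $\widetilde{\mathcal{D}}_L$ collects. Since Theorem \ref{preco} already asserts uniform convergence to a continuous limit for every subsequence extracted from $\mathcal{D}^{r_n}_L$, and $\widetilde{\mathcal{D}}_L$ is defined to be the collection of such limits, this is built into the setup; Lemma \ref{lm1} then closes the loop by ensuring each such limit satisfies the fluid equation. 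The main obstacle, should one wish to avoid the contradiction argument, would be to produce a single fluid solution approximating all $\xi^r$ simultaneously, which is neither needed nor true in general; the contradiction route is the clean path.
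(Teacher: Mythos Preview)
Your argument is correct and is precisely the standard contradiction argument the paper alludes to when it says the proof ``closely resembles arguments from the literature for single-class systems'' (cf.\ Gromoll 2004, Lemma 5.2). One minor remark: the appeal to Lemma \ref{lm1} is not needed for this lemma---membership of $\widetilde{\mathcal{Q}}^*$ in $\widetilde{\mathcal{D}}_L$ follows directly from the definition of $\widetilde{\mathcal{D}}_L$ as the set of subsequential limits, and the fluid-solution property is only used downstream in the proof of Theorem \ref{ssc2}.
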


The proof of those previous lemmas are omitted as it closely resembles arguments from the literature for single-class systems.

\subsubsection*{Proof of Theorem \ref{ssc2}}~ 

Fix $\omega \in \Omega^r_F$, and assume that all random objects are evaluated at this $\omega$. According to \eqref{comp} it is sufficient to show that 
for all $\varepsilon>0$ there exists $r'>0$ such that for all $r>r_0$
\begin{align}
\sup_{t \in [0,T]} \mathbf{d} \left( \widehat{\mathcal{Q}}^r (\omega,t) , \mathcal{B}\ast\Delta^{\nu} \,  \widehat{W}^r (\omega,t) \right ) \leq \varepsilon. \label{mainn}
\end{align}
Fix $ \varepsilon>0$. By Lemma \ref{lm1}, any $\widetilde{\mathcal{Q}}(\cdot) \in \widetilde{\mathcal{D}}_{L}$ is 
a fluid solution to the equation \eqref{fluid-mathcal-Q}. By \eqref{eq:assum6} and \eqref{eq:assum-init4}, we have $\langle\chi^{3+\theta},\mathcal{V}^0\rangle<\infty$. Consequently, by the definition of the constant $q$ in \eqref{const-q}, we derive $\langle \chi^{1+q}, \xi \rangle$. Therefore, it follows from Proposition \ref{asmp} that
$$ \mathbf{d} \left ( \widetilde{\mathcal{Q}} (t) , \mathcal{B}\ast\Delta^{\nu} \widetilde{W}(t) \right ) \xrightarrow[t \rightarrow +\infty]{} 0, $$
where $\widetilde{W}(\cdot) = \langle \chi,\widetilde{\gamma} (\cdot) \rangle$ and such that $\widetilde{\gamma} (\cdot)$ is a fluid solution of \eqref{eq:bar-gamma}.
This implies the existence of  $L^{\ast} > 0$ such that for all $s \geq  L^{\ast}$,
\begin{equation}
\mathbf{d} \left ( \widetilde{\mathcal{Q}} (s) , \mathcal{B}\ast\Delta^{\nu} \, \widetilde{W}(s) \right)  < \varepsilon /3. \label{aw1} 
\end{equation} 
Fix a constant $L>L^{\ast}+1$. 
Considering that the interval $[0,rT]$ is encompassed by the union of the interval $[0,L^{\ast}]$ and the set of overlapping intervals $ [m+L^{\ast},m+L]$ with $m \leq \lfloor rT \rfloor$, demonstrating \eqref{mainn} requires showing
\begin{align}
\max_{m \leq \lfloor rT \rfloor} \sup_{t \in [L^{\ast},L]} \, \mathbf{d} \, \left( \bar{\mathcal{Q}}^{r,m} (\omega,t) , \mathcal{B}\ast\Delta^{\nu} \, \bar{W}^{r,m} (\omega,t) \right) & \leq \varepsilon, \label{sl1} \\
\sup_{t \in [0,L^{\ast}]} \, \mathbf{d} \,  \left( \bar{\mathcal{Q}}^{r,0} (\omega,t) , \mathcal{B}\ast\Delta^{\nu} \, \bar{W}^{r,0} (\omega,t)  \right) & \leq \varepsilon. \label{sl2}
\end{align}
From Lemma \ref{lmf}, there exists $\widetilde{\mathcal{Q}} (\cdot) \in  \widetilde{\mathcal{D}}_{L}$ and $r_0>0$    such that for all $r>r_0$
\begin{align}
\max_{m \leq \lfloor rT \rfloor} \, \sup_{t \in [0,L]} \mathbf{d} \left(  \bar{\mathcal{Q}}^{r,m} (\omega,t) ,  \widetilde{\mathcal{Q}} (t) \right ) < \varepsilon /3. \label{aw2}
\end{align}
Having \eqref{aw2} and \eqref{aw1}, it remains to prove \eqref{aw3} in order to show $(\ref{sl1})$. 
Let $a$ and $b$ two vectors in $ \mathbb{R}_+^K$, and let $A\subset \mathbb{R}_+$ a Borel set. 
Given that $\langle 1, \mathcal{B} \ast \mathcal{V}^e \rangle =Q$, a simple computation yields
$$\mathbf{d} [ (\mathcal{B} \ast \mathcal{V}^e) \, a , (\mathcal{B} \ast  \mathcal{V}^e ) \, b ] \leq   \lvert b-a \rvert \, \lvert Q \rvert . $$
Using the previous formula, we obtain for all $ r > 0$ and $t \in [0,L]$
\begin{align*}
\max_{m \leq \lfloor rT \rfloor} \, \mathbf{d} \left( \mathcal{B}\ast\Delta^{\nu} \, \overline{W}^{r,m} (\omega,t) ,  \mathcal{B}\ast\Delta^{\nu} \, \widetilde{W} (t) \right) 
 \leq \dfrac{\lvert Q \rvert \, \lvert M \lambda \rvert }{e.(\frac{1}{2} M^{(2)}+M P^{\prime}QM)\lambda} \, \left \lvert \overline{W}^{r,m} (\omega,t) - \widetilde{W} (t)  \right \rvert.
\end{align*}
Therefore, by \eqref{unif_W} there exists $r_1>0$ such that for all $r > r_1$,
\begin{align}
\max_{m \leq \lfloor rT \rfloor} \sup_{t \in [0,L]} \mathbf{d} \left( \mathcal{B}\ast\Delta^{\nu} \, \bar{W}^{r,m} (\omega,t) ,  \mathcal{B}\ast\Delta^{\nu} \, \widetilde{W} (t) \right) < \varepsilon/3.  \label{aw3}
 \end{align}
As a result, \eqref{sl1} follows for all $r > r'= \min \lbrace r_0,r_1 \rbrace$. 
To prove \eqref{sl2}, it sufficies to show,
\begin{align*}
\sup_{t \in [0,L^*]} \, \mathbf{d} \left( \widetilde{\mathcal{Q}} (t) ,  \mathcal{B}\ast\Delta^{\nu} \, \widetilde{W} (t) \right)  < \varepsilon /3,
\end{align*} 
It follows from assumption \eqref{eq:assum-init4} and   Proposition \ref{invst} and definition of the lifting map $\mathcal{B}\ast\Delta^{\nu}$, that
\begin{align*}
\widetilde{\mathcal{Q}} (t) = \mathcal{B}\ast\Delta^{\nu} \, \widetilde{W} (0) = \mathcal{B}\ast\Delta^{\nu} \, \widetilde{W} (t) ~~\text{ for all } t \geq 0.
\end{align*}
Thus the result is proved.

\section*{Index of Notation}
\vspace*{0.2cm}

\begin{tabular}{cp{10cm}}

$\mathcal{K}$ & The set of all classes \\
$I(x)$ & The interval $[x, \infty)$  \\
$e$ &  Row vector of ones \\
$P'$ & The transpose of the routing matrix $P$ \\
$Q$ & The matrix $I+P'+P'^2 + \cdots = (I-P')^{-1} $ \\
$\mathcal{M}$ & The space of finite,
positive Borel measures on $\mathbb{R}_{+}$. \\
$\mathcal{M}^K$ & The $K$-ary Cartesian power of $\mathcal{M}$ \\
$\langle g,\nu \rangle$ & $\int g d\nu$ \\
$\nu(A)$ & $\langle 1_A,\nu \rangle$ \\
$\langle  g(\cdot - a),\nu \rangle$ & $ \int_{[a,\infty)} g(x-a) \nu(dx)$ \\
$\mathbf{D} ([0,\infty), S)$ & The space of all right continuous
$S$-valued functions with finite left limits defined on the interval  $[0,\infty)$\\[0.13cm]

$v_{k}(i)$ & The
service times for the $i^{th}$ class $k$ job \\
$v_{k}^{0}(i)$ & The service times requirement of the $i^{th}$
initial job at class $k$.\\
$\nu_k$ & The Borel probability measure of $v_k$ \\
$\nu_k^0$ & The Borel probability
measure of $v_k^0$ \\
$\nu$ & The probability measure vector $(\nu_1, \nu_2, \cdots,\nu_K)$ \\
$\nu^0$ & The probability measure vector $(\nu^0_1, \nu_2^0, \cdots,\nu_K^0)$ \\
$M$& $diag\{\langle \chi,\nu_k \rangle, k\in\mathcal{K}\}$ \\
$M^{(2)}$ & $diag\{\langle \chi^2,\nu_k \rangle, k\in\mathcal{K}\}$ \\
$M^{0}$ & $diag\{\langle \chi,\nu_k^0 \rangle, k\in\mathcal{K}\}$\\
$B_k(t)$ & The distribution function of $\nu_k$ \\
$B(t)$ & $diag\{B_k(t),~k\in\mathcal{K}\}$\\
$ \mathcal{V} $ & $diag \{\nu_k , k \in \mathcal{K} \}$\\
 $ \mathcal{V}^0 $ & $diag \{\nu_k^0 , k \in \mathcal{K} \}$\\
$F^{*n}$ & Convolution of $F$ $n$ times \\
$\mathcal{B}(t)$ & The matrix function $\sum_{n \geq 0} (BP')^{*n}$ \\
$\Delta^{\nu}$ & The lifting map associated with the measure $\nu$ \\
$\sigma_{k}(i)$ &
The arrival epoch of the $i^{th}$ job to arrive at class $k$\\[0.13cm]

$N^l_k(i)$ & Total number of visits to class $k$ by the $i^{th}$ job entering the system as a job of class $l$. \\
$V_k(i)$ & The total service time required by the $i$th exogenous job of class $k$ until their departure from the server \\
$V_k^{0}(i)$ & The total service times required by the $i$th initial job of class $k$ \\
$U_k(i)$ & The time at which the $i$th arrival of class $k$ enters the system \\
$V_{lk}(i,n)$ & The sum of service times required by the $i^{th}$ job of class $l$ from its arrival until its $n^{th}$ visit to class $k$ (included) \\
$V^{0}_{lk}(i,n)$ & Total service time required by the $i^{th}$ initial job of class $l$ until its $n^{th}$ visit to class $k$ \\
$U_{l}(i)$ & The exogenous arrival epoch of the $i^{th}$ job of class $l$ \\
$S(t)$ & The amount
of service received in the interval $[0,t]$   \\
$S(s,t) $ & The amount
of service received in the interval $[s,t]$ \\[0.2cm] 
 
$\nu^e_k$ & The excess lifetime distribution associated
with $\nu$ \\
$\nu^e$ & The vector $(\nu^e_1, \nu^e_2, \cdots, \nu^e_K)$ \\
$\mathcal{V}^e$ & $diag \{\nu_k^e , k \in \mathcal{K} \}$ \\
$B^e_k (t)$ & The distribution function of $\nu^e_k$ \\
$B^e (t)$ & The matrix $diag \{B_k^e(t) , k \in \mathcal{K} \}$\\
$\dot{\mathcal{B}}(t)$ & The derivative of $\mathcal{B}(t)$

\end{tabular}

\bibliographystyle{plain} 
\bibliography{citspre_Ap} 

\end{document}